\renewcommand{\(}{$\,}
\renewcommand{\)}{\,$}
\def\nquad{\hspace{-1cm}}
\def\eqdef{\stackrel{\operatorname{def}}{=}}
\newcommand{\cc}[1]{\mathscr{#1}}
\newcommand{\bb}[1]{\boldsymbol{#1}}
\renewcommand{\bar}[1]{\overline{#1}}
\renewcommand{\hat}[1]{\widehat{#1}}
\renewcommand{\tilde}[1]{\widetilde{#1}}
\renewcommand{\Gamma}{\varGamma}
\renewcommand{\Pi}{\varPi}
\renewcommand{\Sigma}{\varSigma}
\renewcommand{\Delta}{\varDelta}
\renewcommand{\Lambda}{\varLambda}
\renewcommand{\Psi}{\varPsi}
\renewcommand{\Phi}{\varPhi}
\renewcommand{\Theta}{\varTheta}
\renewcommand{\Omega}{\varOmega}
\renewcommand{\Xi}{\varXi}
\renewcommand{\Upsilon}{\varUpsilon}
\def\Var{\operatorname{Var}}
\def\argmax{\operatornamewithlimits{argmax}}
\def\argmin{\operatornamewithlimits{argmin}}
\def\R{I\!\!R}
\def\E{I\!\!E}
\def\P{I\!\!P}
\def\kappa{\varkappa}
\def\T{\top}
\def\loc{\operatorname{loc}}
\def\uv{\bb{u}}
\def\Yv{\bb{Y}}
\def\gammav{\bb{\gamma}}
\def\xiv{\bb{\xi}}
\newcommand{\mygraphics}[3]{\begin{center}
    \resizebox{#1\textwidth}{#2\textheight}{\includegraphics{#3}}
    \end{center}
}
\def\ND{\cc{N}}
\def\cond{\, | \,}
\def\rdl{\epsilon}
\def\rd{\bb{\rdl}}
\def\rddelta{\delta}
\def\rdomega{\varrho}
\def\rdb{\rd}
\def\rdm{\underline{\rdb}}
\def\Id{I\!\!\!I}
\def\Ind{\operatorname{1}\hspace{-4.3pt}\operatorname{I}}
\def\La{\mathbb{L}}
\def\Lab{\La_{\rdb}}
\def\Lam{\La_{\rdm}}
\def\DP{D}
\def\gmi{\mathtt{b}}
\def\gmiid{\mathtt{g}_{1}}
\def\ex{\mathrm{e}}
\def\kullb{\cc{K}} 
\def\gm{\mathtt{g}}
\def\yy{\mathtt{y}}
\def\yyc{\yy_{c}}
\def\xx{\mathtt{x}}
\def\alp{\alpha}
\def\riskt{\cc{R}}
\def\bern{q}
\def\rups{\rr_{0}}
\def\K{K}
\def\CS{\cc{E}}
\def\nunu{\nu_{0}}
\def\rdomega{\varrho}
\def\err{\diamondsuit}
\def\LL{\cc{L}}
\def\dimp{p}
\def\thetav{\bb{\theta}}
\def\thetavs{\thetav^{*}}
\def\thetavd{\thetav^{\circ}}
\def\thetas{\theta^{*}}
\def\fs{f}
\def\VP{V}
\def\fis{\mathfrak{a}}
\def\rr{\mathtt{r}}
\def\zz{\mathfrak{z}}
\def\nuu{\mathfrak{u}}
\def\nud{\mathfrak{u}_{0}}
\def\E{\bb{E}}
\def\P{\bb{P}}
\def\ks{k^{*}}
\def\logdens{\ell}
\def\RS{\cc{A}}
\def\Ass{\text{\textsc{A}}}
\def\ER{\E'}
\def\PR{\P'}
\def\VPb{\bar{\VP}}
\newcommand{\citeasnoun}[1]{\cite{#1}}
\newtheorem{theorem}{Theorem}[section]
\newtheorem{lemma}{Lemma}
\newtheorem{corollary}{Corollary}
\renewcommand{\mygraphics}[1]{\includegraphics[width =#1 \textwidth]}
\begin{document}

\title{Local Quantile Regression
\thanks{
    The financial support from the Deutsche
    Forschungsgemeinschaft via SFB 649 ``Economic Risk'',
    Humboldt-Universit\"at zu Berlin is gratefully acknowledged.
    The first author is partially supported by
Laboratory for Structural Methods of Data Analysis in Predictive Modeling, MIPT,
RF government grant, ag. 11.G34.31.0073.}
}
\author{
    Vladimir Spokoiny
    \thanks{Professor, Weierstrass-Institute, Humboldt University Berlin, Moscow Institute of
    Physics and Technology, Mohrenstr. 39, 10117 Berlin, Germany.
    Email:spokoiny@wias-berlin.de},
    Weining Wang
    \thanks{ Research associate at Ladislaus von Bortkiewicz Chair,
    the Institute for Statistics and Econometrics of Humboldt-Universit\"at zu Berlin,
    Spandauer Stra{\ss}e 1,  10178  Berlin, Germany.
    Email:wangwein@cms.hu-berlin.de},
Wolfgang Karl H\"{a}rdle
\thanks{Professor at Humboldt-Universit\"at zu Berlin and Director of
C.A.S.E. - Center for Applied Statistics and Economics, Humboldt-Universit\"at zu
Berlin, Spandauer Stra{\ss}e 1, 10178 Berlin, Germany.
Email:haerdle@wiwi.hu-berlin.de}.
}

\maketitle

\begin{abstract}
  Quantile regression is a technique to estimate conditional quantile curves.
  It provides a comprehensive picture of a response contingent on explanatory variables.
  In a flexible modeling framework, a specific form of the conditional quantile curve is
  not a priori fixed.
  This motivates a local parametric rather than a global
  fixed model fitting approach.
  A nonparametric smoothing estimator of the conditional
  quantile curve requires to balance between local curvature and stochastic
  variability. In this paper, we suggest a local model selection
  technique that provides an adaptive estimator of the conditional
  quantile regression curve at each design point.
  Theoretical results claim that the proposed adaptive procedure performs as good as
  an oracle which would minimize the local estimation risk for the problem at hand.
  We illustrate the performance of the procedure by an extensive simulation study
  and consider a couple of applications:
  to tail dependence analysis for the Hong Kong stock market
  and to analysis of the distributions of the risk factors of temperature dynamics.
\end{abstract}

{\bf \em Keywords: }
local MLE, excess bound, propagation condition,
adaptive bandwidth selection.

{\bf \em JEL classification:} C00, C14, J01, J31

\section{Introduction}

 Quantile regression is gradually developing into a comprehensive approach for the
 statistical analysis of linear and nonlinear response models. Since the rigorous
 treatment of linear quantile regression by \citeasnoun{Keo:bas:1987}, richer models
 have been introduced into the literature, among them are nonparametric, semiparametric
 and additive approaches. Quantile regression or conditional quantile estimation is a
 crucial element of analysis in many quantitative problems. In financial risk
 management, the proper definition of quantile based Value at Risk impacts asset
 pricing, portfolio hedging and investment evaluation, \citeasnoun{eng:man:2004},
 \citeasnoun{cai:wan:08} and \citeasnoun{fit:wil:08}. In labor market analysis of wage
 distributions, education effects and earning inequalities are analyzed via quantile
 regression. Other applications of conditional quantile studies include, for example,
 conditional data analysis of children growth and ecology, where it accounts for the
 unequal variations of response variables, see  \citeasnoun{ja:ha:10}.

   \begin{figure}
    \begin{center}
      \mygraphics{0.7}{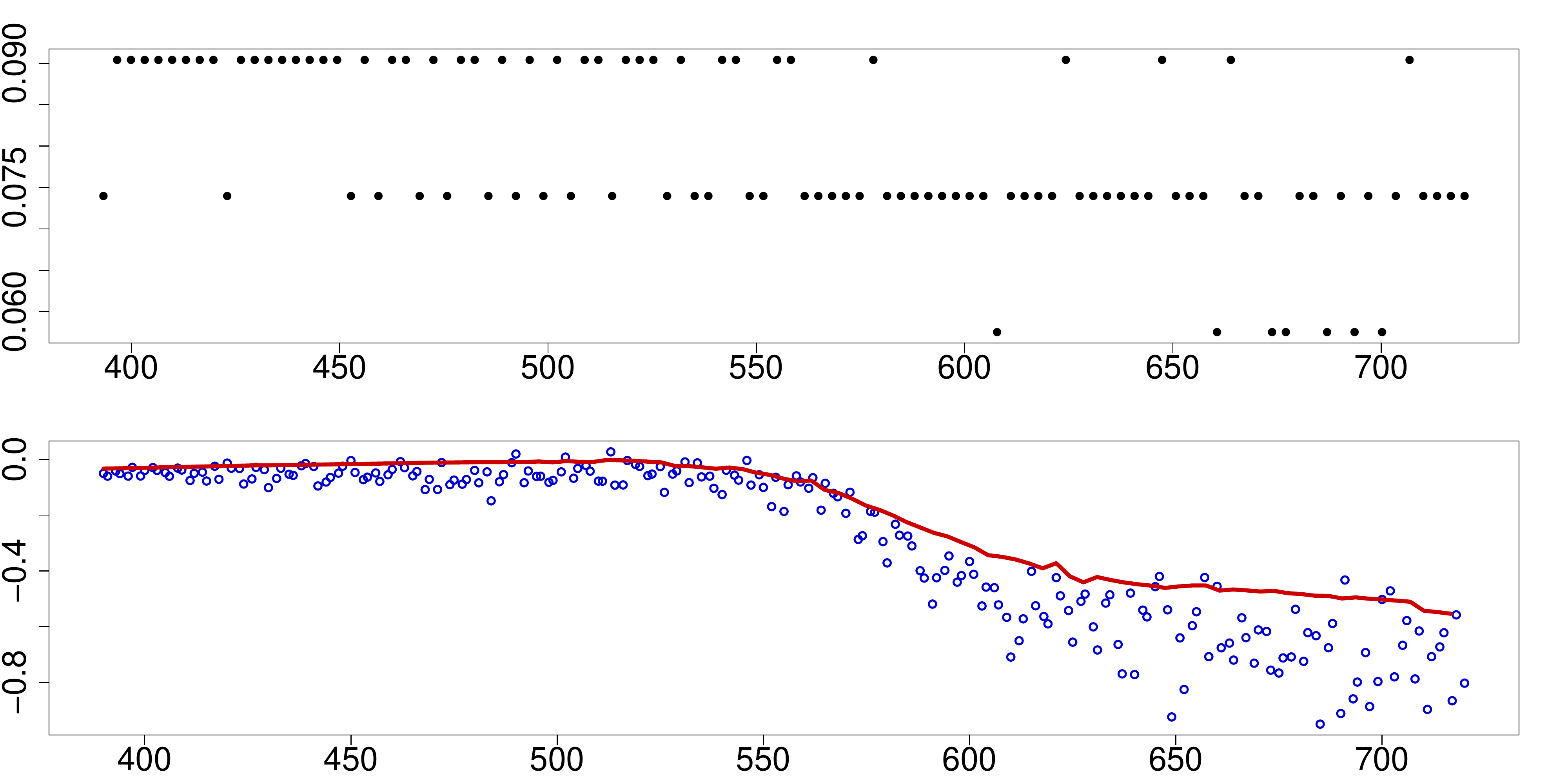}\\
         \caption{The bandwidth sequence (upper panel), plot of data  and the estimated
         \( 90\% \) quantile curve (lower panel)}
      \label{fig:lidar}
    \end{center}
   \end{figure}

  In applications, the predominantly used linear form of the calibrated models is mainly
  determined by practical and numerical reasonings. There are many efficient
  algorithms (like sparse linear algebra and interior point methods) available,
  \citeasnoun{St:Ro:89}, \citeasnoun{St:Ro:97}, \citeasnoun{Jo:Ro:99}, and
  \citeasnoun{Ro:2005}, etc.
  However, the assumption of a linear parametric structure can be too restrictive in
  many applications.
  This observation spawned a stream of literature on nonparametric modeling of
  quantile regression,  \citeasnoun{yu:jo:98}, \citeasnoun{Fan:Hu:94}, etc. One line of
  thought concentrated on different smoothing techniques, e.g. splines, kernel
  smoothing, etc.; see \citeasnoun{fan:1996}. Another line of literature considers
  structural semiparametric models to cope with the curse of dimensionality, like,
  partial linear models, \citeasnoun{ha:so:ya:10}, etc., additive models,
  \citeasnoun{KLX08}, \citeasnoun{jo:lee:05}, etc; single index models,
  \citeasnoun{wu:yu:10}, \citeasnoun{Ro:10}, etc. Yet another strand of literature has
  been involved in ultra-high dimensional situations where a careful variable selection
  technique needs to be implemented, \citeasnoun{al:vi:10} and \citeasnoun{Ro:10}. In
  most of the aforementioned papers on non and semiparametric quantile regression, a
  smoothing parameter selection is implicit, and it is mostly a consequence of
  theoretical assumptions like e.g. degree of smoothness, but falls short in practical
  hints for real data applications. An important exception is the method for local
  nonparametric kernel smoothing by \citeasnoun{yu:jo:98} and \citeasnoun{cai:xu:08}.
  They both propose a data driven bandwidth choice.

  This paper offers a novel data-driven quantile regression procedure. Its numerical
  performance is illustrated by competitive simulation examples and applications to
  real data. The proposed adaptive local quantile regression algorithm is easy to
  implement and works for a wide class of applications. The idea of this algorithm
  is to select the bandwidth locally by a sequence of likelihood ratio tests. We
  also provide a rigorous theoretical study for the proposed method. The optimality
  results are stated as exact and sharp oracle risk bounds. In particular, we show that the
  performance of the adaptive procedure is essentially the same as the best possible
  one. The results apply for finite sample and under mild regularity conditions.

  The main message is that the proposed algorithm is spatially adaptive, stable in
  homogeneous situation and sensitive to structural changes of the quantile curve.
  This conclusion is justified by theoretical results and confirmed by the numerical
  study.
  As an example, consider Figure \ref{fig:lidar} which presents our results for
  analyzing the Lidar data set, \citeasnoun{Ru:2003}. The presented quantile curve
  switches smoothness in the middle, and it is naturally reflected by the bandwidth
  sequence (upper panel) selected. In the presence of changing to sharper slope of
  the curve, the bandwidths get smaller to attain better approximations. This
  example shows that the algorithm proposed in this paper can adaptively choose the
  bandwidth at each design point.

   This article is organized as follows.
   Section~\ref{Sadaptproc} introduces the local model selection (LMS) procedure
   and explains how to important tuning parameters (critical values) can be computed.
   Section~\ref{SsimulLQR} presents a number of Monte Carlo simulations to
   illustrate the proposed methodology.
   In Section~\ref{SapplLQR} the method is applied to check the tail dependency
   among portfolio stocks, and  estimate quantile curves for temperature risk
   factors.
   Section~\ref{StheoryLQR} presents our main theoretical result which states a kind
   of oracle risk bound for the proposed procedure: it performs nearly as good as
   the best one among the considered family of local quantile estimators.
   The necessary conditions and main steps of the proof like ``propagation'',
   ``stability'' and ``oracle'' property are delegated to the Appendix.
   There we also collect some of general results like majorization bounds and
   non-asymptotic Wilks Theorem for the likelihood ratio test statistics.

\section{Adaptive estimation procedure}
\label{Sadaptproc}
This section introduces the considered problem and offers an adaptive estimation
procedure.

\subsection{Quantile regression model}
Given the quantile level \( \tau \in (0,1) \),
the quantile regression model describes the following
relation between the response \( Y \) and the regressor \( X \):
\begin{EQA}[c]
    \P(Y > \fs(x) \cond X=x)
    =
    \tau ,
\label{FYcondx}
\end{EQA}
where \( \fs(x) \) is the unknown \emph{quantile regression function}.
This function is the target of the analysis and it has to be estimated from
independent observations \( \{X_{i}, Y_{i}\}_{i=1}^{n} \).
For the case of a deterministic design,
this quantile relation can be represented as
\begin{EQA}[c]
    Y_{i}
    =
    \fs(X_{i}) + \varepsilon_{i} \, ,
\label{eq:model}
\end{EQA}
where the errors \( \varepsilon_{i} \) follow
\( \P(\varepsilon_{i} > 0) = \tau \).

For simplicity of presentation, we consider a univariate regressor
\( X \in \R^1 \) and a deterministic design in this paper,
an extension to the \( d \)-dimensional case
\( X \in \R^d \) with  \( d>1 \) is straightforward.

\subsection{ A qMLE View on Quantile Estimation}
The quantile function \( \fs(\cdot) \) in (\refeq{eq:model}) is usually recovered by
minimizing the sum
\begin{EQA}[c]
    \sum_{i=1}^{n} \rho_{\tau}\{Y_{i} - \fs(X_{i}) \bigr\} ,
\label{fsrhotau}
\end{EQA}
over the class of all considered quantile functions \( \fs(\cdot) \),
where
\begin{EQA}[c]
    \rho_{\tau}(u)
    \eqdef
    u \{\tau \Ind(u\ge 0) - (1-\tau)\Ind(u<0)\}
    =
    u \bigl\{ \tau - \Ind(u<0) \bigr\}.
\label{rhotauqr}
\end{EQA}
Such an approach is reasonable because the true quantile function
\( \fs(x) \) minimizes the expected value
of the sum in \eqref{fsrhotau}.
An important special case is given by \( \tau = 1/2 \).
Then an estimator of \( \fs(\cdot) \) is built as minimizer of the least absolute
deviations (LAD) contrast \( \sum |Y_{i} - f(X_{i})| \).

The minimum contrast approach based on minimization of \eqref{fsrhotau} can also be put
in a quasi maximum likelihood framework. Assume that the residuals \( \varepsilon_{i} \)
from \eqref{eq:model} are i.i.d. and \( \logdens(x) \) is their negative log-density on \(
\R^{1} \). Then the joint log-density is given by the sum
\begin{EQA}[c]
    - \sum \logdens\{ Y_{i} - \fs(X_{i}) \}
\label{rhodemo}
\end{EQA}
and its maximization is equivalent to minimization of the contrast \eqref{fsrhotau}
with a pdf from
the \emph{asymmetric Laplace distribution} \(ALD_\tau\):
\begin{EQA}[c]
    \logdens(u)
    =
    \logdens_{\tau}(u)
    =
    \log \bigl\{ \tau(1-\tau) \bigr\} - \rho_{\tau}(u),
    \quad
    -\infty < u < \infty .
\label{Laplacerhode}
\end{EQA}
The \emph{parametric approach} (PA)  assumes that the quantile
regression function \( \fs(\cdot) \) belongs to a family of functions
\( \bigl\{ \fs_{\thetav}(x), \, \thetav \in \Theta \bigr\} \),
where \( \Theta \) is a subset of the \( (\dimp+1) \)-dimensional Euclidean space.
Equivalently,
\begin{EQA}[c]
     \fs(x) = \fs_{\thetavs}(x),
     \label{par:all}
\end{EQA}
where \( \thetavs \) is the true parameter which is usually the target of
estimation.

Examples are a constant model:
\begin{EQA}[c]
    \fs_{\thetas}(x) \equiv \theta_{0},
    \label{par:1}
\end{EQA}
with \( \thetas = \theta_{0} \) or a linear model:
\begin{EQA}[c]
    \fs_{\thetavs}(x) = \theta_{0} +  \theta_{1}x,
    \label{par:2}
\end{EQA}
with \( \thetavs = (\theta_{0}, \theta_{1})^{\T} \).

Let \( \P_{\thetav} \) be the parametric measure on the
observation space which corresponds to the regression model
\eqref{eq:model} with \( \fs(\cdot) \equiv \fs_{\thetav}(\cdot) \) and with the
i.i.d. errors \( \varepsilon_{i} \) following the asymmetric Laplace distribution
\eqref{Laplacerhode}.
Then the log-likelihood \( L(\thetav) = L(\Yv,\thetav) \) for \( \P_{\thetav} \)
can be written as
\begin{EQA}
     L(\thetav)
     & \eqdef &
     \log\bigl\{ \tau(1-\tau) \bigr\} \sum_{i=1}^{n} 1
     - \sum_{i=1}^{n} \rho_{\tau} \{Y_{i} -  \fs_{\thetav}(X_{i})\}
     \label{eq:likeliaa}
\end{EQA}
and the qMLE \( \tilde{\thetav} \) maximizes \( L(\thetav) \), or,
equivalently minimizes the contrast
\( \sum_{i=1}^{n} \rho_{\tau} \{Y_{i} -  \fs_{\thetav}(X_{i})\} \) over all
\( \thetav \in \Theta \).

The described parametric construction is based on two assumptions: one is about the
error distribution \eqref{Laplacerhode} and the other one is about the shape of the
regression function \( \fs \).
 However, it is only used for motivating our approach.
%
Our theoretical study will be done under the true data distribution which follows
\eqref{eq:model} under mild regularity conditions.
The next section explains how a smooth regression
function \( \fs \) can be modeled by a flexible local parametric assumption.

\subsection{Local polynomial qMLE}
This section explains how the restrictive \emph{global PA}
\( \fs(\cdot) \equiv \fs_{\thetavs}(\cdot) \) can be relaxed by using a local
parametric approach.
Let a point \( x \) be fixed.
The \emph{local PA} at a point \( x \in \R \) only requires that the quantile
regression function \( \fs(\cdot) \) can be approximated by a parametric function
\( \fs_{\thetav}(\cdot) \) from the given family in a vicinity of \( x \).
Below we fix a family of polynomial functions of degree \( \dimp \) motivated by Taylor approximation:
\begin{EQA}[c]
    \fs(u)
    \approx
    \fs_{\thetav}
    \eqdef
    \theta_{0} +  \theta_{1}(u-x) + \ldots + \theta_{p}(u-x)^{p}/p!
    \label{eq:locpoly}
\end{EQA}
for \( \thetav = (\theta_{0},\ldots,\theta_{p})^{\T} \).
The corresponding parametric model can be written as
\begin{EQA}[c]
    Y_{i} = \Psi_{i}^{\T} \thetav + \varepsilon_{i} \, ,
\label{modparlin}
\end{EQA}
where \( \Psi_{i} = \{ 1, (X_{i} - x), (X_{i} - x)^{2}/2!,\ldots,(X_{i} - x)^{\dimp}/\dimp!\}^{\T}
\in \R^{\dimp+1} \).

A \emph{local likelihood approach} at \( x \) is specified by a
\emph{localizing scheme} \( W \) given by a collection of weights \( w_{i} \)
for \( i=1,\ldots,n \).
The weights \( w_{i} \) vanish for points \( X_{i} \) lying outside a vicinity of
the point \( x \).
A standard proposal for choosing the weights \( W \) is
\( w_{i} = K_{\loc}\{(X_{i}-x)/h\} \), where \( K_{\loc}(\cdot) \) is a \emph{kernel function}
with a compact support,
while \( h \) is a \emph{bandwidth} controlling the degree of localization.

Define now the local log-likelihood at \( x \) by
\begin{EQA}[c]
     L(W,\thetav)
     \eqdef
     \log\tau(1-\tau) \sum_{i=1}^{n} w_{i}
     - \sum_{i=1}^{n} \rho_{\tau}(Y_{i} - \Psi_{i}^{\T} \thetav) w_{i} \, .
     \label{likelihood}
\end{EQA}
This expression is similar to
the global log-likelihood in \eqref{eq:likeliaa}, but each summand in \( L(W,\thetav) \)
is multiplied with the weight \( w_{i} \), so only the points from the local vicinity
of \( x \) contribute to \( L(W,\thetav) \).
Note that this local log-likelihood depends on the central point \( x \) via the
structure of the basis vectors \( \Psi_{i} \) and via the weights \( w_{i} \).
The corresponding local qMLE at \( x \) is defined via maximization of
\( L(W,\thetav) \):
\begin{EQA}
\label{eq:non}
    \tilde{\thetav}(x)
    &=&
    \{\tilde{\theta}_{0}(x),\tilde{\theta}_{1}(x),\ldots,\tilde{\theta}_{p}(x)\}^\T
    \\
    & \eqdef &
    \argmax_{\thetav \in \Theta} L(W,\thetav)
    \\
    &=&
    \argmin_{\thetav \in \Theta}
    \sum_{i=1} \rho_{\tau}(Y_{i} - \Psi_{i}^{\T} \thetav) w_{i} \, .
\end{EQA}
The first component \( \tilde{\theta}_{0}(x) \) provides an
estimator of \( \fs(x) \), while \( \tilde{\theta}_{m}(x) \) is an estimator of the
derivative \( \fs^{(m)}(x) \), \( m=1,\ldots,\dimp \).

\subsection{Selection of a Pointwise Bandwidth}
\label{Spointband}

The choice of bandwidth \( h \) is an important issue in implementing
(\refeq{eq:non}). One can reduce the variance of the estimation by increasing the
bandwidth, but at a price of possibly inducing more modeling bias
measured by the accuracy of approximation in \eqref{eq:locpoly}; see
Figure~\ref{fig:lmspic}.

A desirable choice of a bandwidth at a fixed point would strike a balance between
the variance and the bias depending on the local shape of \( \fs(\cdot) \) in the
vicinity of \( x \).
Many approaches have been proposed along this line; {see e.g. \cite{yu:jo:98}} and
references therein.
However, their justification and implementation is based on
asymptotic arguments and require large samples.
Here we propose a pointwise bandwidth selection technique based on a finite sample theory.

Our basic setup of the algorithm is described as follows.
First one fixes a finite ordered set of possible bandwidths
\( h_{1} < h_{2} < \ldots < h_{\K} \), where \( h_{1} \) is very small, while
\( h_{\K} \) should be a global bandwidth of the order of the design range.
The bandwidth sequence can be taken geometrically increasing of the form
\( h_{k} = a b^{k} \) with fixed \( a > 0 \), \( b > 1 \), and
\(  n^{-1}<a b^{k}<1 \) for \( k = 1, \ldots, \K \) (\( \Ass. 2.\)).
The total number \( \K \) of the candidate bandwidths
is then at most logarithmic in the sample size \( n \).
%
For each \( k \le \K \), an
ordered weighting schemes
\( W^{(k)} =( w^{(k)}_{1}, w^{(k)}_{2}, \ldots, w^{(k)}_{n})^{\T} \) is defined via
\( w_{i}^{(k)} \eqdef K_{\loc}\{(x-X_{i})/h_{k}\} \) leading
to a local quantile estimator \( \tilde{\thetav}_{k}(x) \)
with
\begin{EQA}[c]
    \tilde{\thetav}_{k}(x)
    =
    \argmax_{\thetav \in \Theta} L(W^{(k)},\thetav)
    =
    \argmin_{\thetav \in \Theta}
    \sum_{i=1} \rho_{\tau}(Y_{i} - \Psi_{i}^{\T} \thetav) w_{i}^{(k)} \, .
\label{eq:nonk}
\end{EQA}
%
%
The proposed selection procedure is similar in spirit to \citeasnoun{le:ma:96}.
If the underlying quantile regression function is smooth, one can expect a good
quality of approximation \eqref{eq:locpoly} for a large bandwidth among
\( \{ h_{k} \}_{k=1}^{\K} \).
Moreover, if the approximation is good for one bandwidth, it will be also suitable
for all smaller bandwidths.
So, if we observe a significant difference between the estimator
\( \tilde{\thetav}_{k}(x) \) corresponding to the bandwidth \( h_{k} \) and an
estimator \( \tilde{\thetav}_{\ell}(x) \) corresponding to a smaller bandwidth \( h_{\ell} \),
this is an indication that the approximation \eqref{eq:locpoly} for the window size
\( h_{k} \) becomes too rough.
This justifies the following algorithm.
Start with the smallest bandwidth \( h_{1} \).
For any \( k > 1 \), compute
the local qMLE \( \tilde{\thetav}_{k}(x) \) and check whether it is consistent with
all the previous estimators \(\tilde{\thetav}_{\ell}(x) \) for \( \ell < k \). If the
consistency check is negative, the procedure terminates and selects the latest
accepted estimator.

The most important ingredient of the method is the consistency check.
The Lepski method suggests to use the difference
\( \tilde{\thetav}_{k}(x) - \tilde{\thetav}_{\ell}(x) \) as a test statistic;
see e.g. \cite{le:ma:96}.
We follow the suggestion from \cite{Polzehl:2006} and apply a localized likelihood
ratio type test.
More precisely, the local MLE \( \tilde{\thetav}_{\ell}(x) \)
maximizes the log-likelihood \( L\bigl( W^{(\ell)},\thetav \bigr) \), and the maximal
value of \eqref{likelihood} given by
\( \sup_{\thetav} L\bigl( W^{(\ell)},\thetav \bigr)
= L\bigl( W^{(\ell)},\tilde{\thetav}_{\ell}(x) \bigr) \)
is compared with the particular log-likelihood value
\( L\bigl( W^{(\ell)},\tilde{\thetav}_{k}(x) \bigr) \), where the estimator
\( \tilde{\thetav}_{k}(x) \)
is obtained by maximizing the other local log-likelihood function
\( L(W^{(k)},\thetav) \).
The difference
\( L\bigl( W^{(\ell)},\tilde{\thetav}_{\ell}(x) \bigr)
- L\bigl( W^{(\ell)},\tilde{\thetav}_{k}(x) \bigl) \) is
always non-negative.
The check rejects \( \tilde{\thetav}_{k}(x) \) if this difference is too large
for some \( \ell < k \).
Equivalently one can say that the test checks whether \( \tilde{\thetav}_{k}(x) \)
belongs to the confidence sets \( \CS_{\ell}(\zz) \) of \( \tilde{\thetav}_{\ell}(x) \):
\begin{EQA}[c]
    \CS_{\ell}(\zz)
    \eqdef
    \bigl\{
        \thetav \in \Theta:
        L\bigl( W^{(\ell)},\tilde{\thetav}_{\ell}(x) \bigr)
        - L\bigl( W^{(\ell)},\thetav \bigr)
        \le
        \zz
    \bigr\} .
\label{CSellzz}
\end{EQA}
A great advantage of the likelihood ratio test is that the critical value
\( \zz \) can be selected universally.
This is justified by the Wilks phenomenon: the likelihood ratio test statistics is
nearly \( \chi^{2} \) and its asymptotic distribution depends only on the dimension
of the parameter space.
Unfortunately, these arguments do not apply to finite samples under possible
model misspecification and we therefore offer an alternative way of fixing the critical values
\( \zz \) which is based on the so called propagation condition.
We also allow that the width of the confidence set \( \CS_{\ell}(\zz) \) depends on
the index \( \ell \), that is, \( \zz = \zz_{\ell} \).
Our adaptation algorithm can be summarized as follows:
at each step \( k \), an estimator \( \hat{\thetav}_{k}(x) \) is
constructed based on the first \( k \) estimators
\( \tilde{\thetav}_{1}(x),\ldots, \tilde{\thetav}_{k}(x) \) by the following rule,
\begin{itemize}
    \item
     Start with \( \hat{\thetav}_{1}(x) = \tilde{\thetav}_{1}(x) \).
    \item
     For \( k \ge 2 \), \( \tilde{\thetav}_{k}(x) \) is accepted and
     \( \hat{\thetav}_{k}(x) \eqdef \tilde{\thetav}_{k}(x) \), if
     \( \tilde{\thetav}_{k-1}(x) \) was accepted and
\begin{EQA}[c]
    L\bigl( W^{(\ell)},\tilde{\thetav}_{\ell}(x) \bigr)
    - L\bigl( W^{(\ell)},\tilde{\thetav}_{k}(x) \bigr)
    \le
    \zz_{\ell},
    \qquad
    \ell = 1,\ldots,k-1 .
\label{eq:confs}
\end{EQA}
     \item
        Otherwise \( \hat{\thetav}_{k}(x) = \hat{\thetav}_{k-1}(x) \).
  \end{itemize}

The adaptive estimator \( \hat{\thetav}(x) \) is the latest accepted estimator
        after all \( \K \) steps:
\begin{EQA}[c]
    \hat{\thetav}(x)
    \eqdef
    \hat{\thetav}_{\K}(x)
\label{hatthetKx}
\end{EQA}
A visualization of the procedure is presented in Figure \ref{fig:lmspic}.
The critical values \( \zz_{\ell} \)'s are selected by an algorithm based on
the propagation condition explained in the next section.


 \begin{figure}
   \begin{center}
     \mygraphics{0.5}{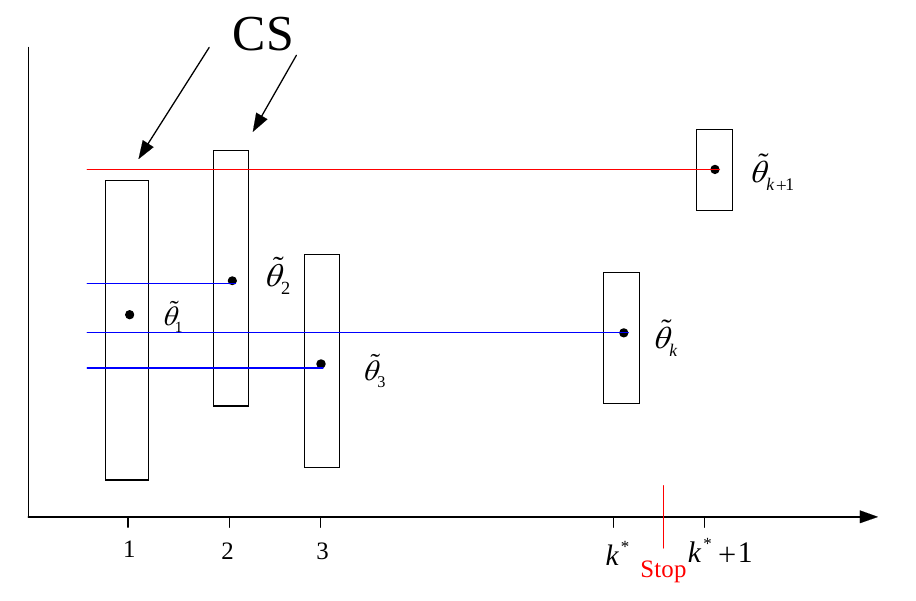}\\
     \caption{Demonstration of the local adaptive algorithm.}\label{fig:lmspic}
  \end{center}
 \end{figure}

\subsection{ Parameter Tuning by Propagation Condition}
\label{SpropCV}
The practical implementation requires to fix the critical values of
\( \zz_{1},\ldots,\zz_{\K-1} \).
We apply the \emph{propagation} approach which is an extension of the proposal from
\cite{Spokoiny:2009,SV2007}.
The idea  is to tune the parameter of the procedure for one artificial
parametric situation.
Later we show that such defined critical values work well in the general setup and
provide a nearly efficient estimation quality.
 The
presented bandwidth selector can be viewed as a multiple testing procedure.
This suggests fixing the critical values as in the general testing theory
by ensuring a prescribed performance under the null hypothesis.
In our case, the null hypothesis corresponds to the pure parametric situation with
\( \fs(\cdot) \equiv \fs_{\thetavs}(\cdot) \) in the equation \eqref{eq:model}.
Moreover, we fix some particular distribution of the errors \( \varepsilon_{i} \),
our specific choice is \(ALD_\tau\) with
parameter \( \tau \).
Below in this section we denote by \( \P_{\thetavs} \) the data distribution
under these assumptions.

For this artificial data generating process,
all the estimators \( \tilde{\thetav}_{k}(x) \) should be consistent to
each other and the procedure should not terminate at any intermediate step
\( k < \K \).
This effect is called as \emph{propagation}: in the parametric situation, the degree
of locality will be successfully increased until it reaches the largest scale.
The critical values are selected to ensure the desired \emph{propagation condition}
which effectively means a ``no false alarm'' property: the selected adaptive
estimator coincides in the most cases with the estimator \( \tilde{\thetav}_{\K}(x) \)
corresponding to the largest bandwidth.
The event \( \bigl\{ \tilde{\thetav}_{k}(x) \neq \hat{\thetav}_{k}(x) \bigr\} \)
for \( k \le \K \)
is associated with a false alarm and the corresponding loss can be measured by the
difference
\begin{EQA}[c]
    L\bigl( W^{(k)},\tilde{\thetav}_{k}(x),\hat{\thetav}_{k}(x) \bigr)
    \eqdef
    L\bigl( W^{(k)},\tilde{\thetav}_{k}(x) \bigr)
    - L\bigl( W^{(k)},\hat{\thetav}_{k}(x) \bigr) .
\label{LWktt}
\end{EQA}
The \emph{propagation condition} postulates that
the risk induced by such false alarms is smaller than the upper bound for the risk
of the estimator \( \tilde{\thetav}_{k}(x) \) in the pure parametric situation:
\begin{EQA}[c]
    \E_{\thetavs} L^{r}\bigl( W^{(k)},\tilde{\thetav}_{k}(x),\hat{\thetav}_{k}(x) \bigr)
    \le
    \alpha \riskt_{r} 
    \qquad
    k=2,\ldots,K,
\label{eq:propa}
\end{EQA}
where the constant \( \riskt_{r} \) is such that for all \( k \le \K \), it holds
\begin{EQA}[c]
        \E_{\thetavs} L^{r}\bigl( W^{(k)},\tilde{\thetav}_{k}(x),\thetavs \bigl)
    \le
    \riskt_{r} \, .
\label{riskkx}
\end{EQA}
The values
\( \alpha \) and \( r \) in \eqref{eq:propa} are two hyper-parameters.
The role of \( \alpha \) is similar to the significance level of a test,
while \( r \) denotes the power of the loss function.
It is worth mentioning that
\begin{EQA}[c]
     \E_{\thetavs} L^{r}\bigl( W^{(k)},\tilde{\thetav}_{k}(x),\hat{\thetav}_{k}(x) \bigl)
     \to
     \P_{\thetavs} \bigl\{ \tilde{\thetav}_{k}(x) \neq \hat{\thetav}_{k}(x) \bigr\},
     \qquad
     r \to 0 .
\end{EQA}
The critical values \( \zz_{1},\ldots,\zz_{\K-1} \) enter implicitly
in the propagation condition: if
the false alarm event \( \{ \tilde{\thetav}_{k}(x) \neq \hat{\thetav}_{k}(x) \} \)
happens too often, it is an indication that some of the critical values
\( \zz_{1},\ldots,\zz_{k-1} \) are too small.
%
Note that (\refeq{eq:propa}) relies on the artificial parametric model
\( \P_{\thetavs} \) instead of the true model \( \P \).
The point \( \thetavs \) here can be selected arbitrarily, e.g. \( \thetavs = 0 \).
This fact relies on the linear parametric structure of the model \eqref{modparlin} and is
justified by the following simple lemma.

\begin{lemma}
\label{Lpivotq}
The distribution of \( L\bigl( W^{(k)},\tilde{\thetav}_{k}(x),\hat{\thetav}_{k}(x) \bigr) \)
and of \( L\bigl( W^{(k)},\tilde{\thetav}_{k}(x),\thetavs \bigr) \) under \( \P_{\thetavs} \)
does not depend on \( \thetavs \).
\end{lemma}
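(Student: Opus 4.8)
The plan is to exploit the linear parametric structure of the local model \eqref{modparlin}: under \( \P_{\thetavs} \), the whole family of local log-likelihood surfaces \( \{\thetav\mapsto L(W^{(k)},\thetav)\}_{k\le\K} \) is a deterministic translate, by the vector \( \thetavs \), of the corresponding family in the reference situation \( \thetavs=0 \). Consequently every likelihood \emph{increment} that enters the selection procedure is a functional of the errors \( \varepsilon_{1},\dots,\varepsilon_{n} \) alone, and hence has a law that does not involve \( \thetavs \).

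First I would couple the models across \( \thetavs \). Fix i.i.d.\ \( ALD_{\tau} \) variables \( \varepsilon_{1},\dots,\varepsilon_{n} \) and set \( Y_{i} \eqdef \Psi_{i}^{\T}\thetavs + \varepsilon_{i} \), so that \( (Y_{1},\dots,Y_{n}) \sim \P_{\thetavs} \). Because the weights \( w_{i}^{(k)} = K_{\loc}\{(x-X_{i})/h_{k}\} \) and the basis vectors \( \Psi_{i} \) depend only on the deterministic design, writing \( Y_{i}-\Psi_{i}^{\T}\thetav = \varepsilon_{i} - \Psi_{i}^{\T}(\thetav-\thetavs) \) in \eqref{likelihood} yields, for every \( k \) and every \( \thetav \),
\[
  L\bigl(W^{(k)},\thetav\bigr)
  = L^{\varepsilon}\bigl(W^{(k)},\thetav-\thetavs\bigr),
  \qquad
  L^{\varepsilon}\bigl(W^{(k)},\thetav\bigr)
  \eqdef \log\tau(1-\tau)\sum_{i=1}^{n} w_{i}^{(k)} - \sum_{i=1}^{n}\rho_{\tau}\bigl(\varepsilon_{i}-\Psi_{i}^{\T}\thetav\bigr) w_{i}^{(k)},
\]
where \( L^{\varepsilon} \) --- the local log-likelihood of the pseudo-sample \( (\varepsilon_{i}) \) --- is by construction a measurable function of \( (\varepsilon_{i}) \) only, the same for every \( \thetavs \).

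Next I would propagate the translation through the construction. Since \( \argmax \) commutes with the shift \( \thetav\mapsto\thetav-\thetavs \) (the parameter set \( \Theta \) being translation invariant, e.g.\ \( \Theta=\R^{\dimp+1} \)), \eqref{eq:nonk} gives \( \tilde{\thetav}_{k}(x) = \tilde{\thetav}^{\varepsilon}_{k}(x)+\thetavs \), with \( \tilde{\thetav}^{\varepsilon}_{k}(x) \eqdef \argmax_{\thetav} L^{\varepsilon}(W^{(k)},\thetav) \) a function of \( (\varepsilon_{i}) \) only. Each test \eqref{eq:confs} then reads
\[
  L\bigl(W^{(\ell)},\tilde{\thetav}_{\ell}(x)\bigr) - L\bigl(W^{(\ell)},\tilde{\thetav}_{k}(x)\bigr)
  = L^{\varepsilon}\bigl(W^{(\ell)},\tilde{\thetav}^{\varepsilon}_{\ell}(x)\bigr) - L^{\varepsilon}\bigl(W^{(\ell)},\tilde{\thetav}^{\varepsilon}_{k}(x)\bigr),
\]
so \( \thetavs \) cancels and every acceptance decision coincides with the one made from \( (\varepsilon_{i}) \); by the inductive rule leading to \eqref{hatthetKx}, \( \hat{\thetav}_{k}(x) = \hat{\thetav}^{\varepsilon}_{k}(x)+\thetavs \), again with \( \hat{\thetav}^{\varepsilon}_{k}(x) \) a function of \( (\varepsilon_{i}) \) only.

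Finally I would substitute these identities into the two target quantities. The translations cancel, leaving
\[
  L\bigl(W^{(k)},\tilde{\thetav}_{k}(x),\hat{\thetav}_{k}(x)\bigr)
  = L^{\varepsilon}\bigl(W^{(k)},\tilde{\thetav}^{\varepsilon}_{k}(x)\bigr) - L^{\varepsilon}\bigl(W^{(k)},\hat{\thetav}^{\varepsilon}_{k}(x)\bigr),
  \qquad
  L\bigl(W^{(k)},\tilde{\thetav}_{k}(x),\thetavs\bigr)
  = L^{\varepsilon}\bigl(W^{(k)},\tilde{\thetav}^{\varepsilon}_{k}(x)\bigr) - L^{\varepsilon}\bigl(W^{(k)},0\bigr),
\]
both right-hand sides being fixed measurable functions of \( (\varepsilon_{1},\dots,\varepsilon_{n}) \), whose law --- the \( \varepsilon_{i} \) being i.i.d.\ \( ALD_{\tau} \) irrespective of \( \thetavs \) --- does not depend on \( \thetavs \). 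The argument is essentially bookkeeping; the one place deserving care is the induction for \( \hat{\thetav}_{k}(x) \), where one must check that \emph{every} test entering its construction transforms in the same way --- immediate from the displayed cancellation --- so that no genuine obstacle arises.
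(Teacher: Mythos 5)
Your proposal is correct and follows essentially the same route as the paper: rewrite \( L(W^{(k)},\thetav) \) in terms of \( \varepsilon_{i} \) and the shifted parameter \( \uv=\thetav-\thetavs \), note that the selection rule and both likelihood differences are functionals of the translated surfaces alone, and conclude pivotality. Your write-up is merely more explicit about the coupling across \( \thetavs \) and the induction for \( \hat{\thetav}_{k}(x) \), which the paper compresses into the remark that this quantity is a function of the pairwise excesses.
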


\begin{proof}
Under PA \( \fs(\cdot) \equiv \fs_{\thetavs}(\cdot) \), it holds
\( Y_{i} - f(X_{i}) = Y_{i} - \Psi_{i}^{\T}\thetavs = \varepsilon_{i} \) and hence,
\begin{EQA}[c]
    L(W^{(k)},\thetav)
    =
    \log\{\tau(1-\tau)\}\sum_{i=1}^{n} w_{i}^{(k)}
    + \sum_{i=1}^{n} \rho_{\tau}\{ \varepsilon_{i} - \Psi_{i}^{\T} (\thetav - \thetavs) \}w_{i}^{(k)} .
\label{LWkts}
\end{EQA}
A simple inspection of this formula yields that
the distribution of \( L(W^{(k)},\thetav) \) only depends
on \( \uv = \thetav - \thetavs \).
In other words, we can use the free parameter \( \uv = \thetav - \thetavs \)
whatever \( \thetavs \) is, e.g. \( \thetavs \equiv 0 \).
The same argument applies to the difference
\( L\bigl( W^{(k)},\tilde{\thetav}_{k}(x),\tilde{\thetav}_{\ell}(x) \bigr) \) for
\( \ell < k \).
Moreover, \( L\bigl( W^{(k)},\tilde{\thetav}_{k}(x),\hat{\thetav}_{k}(x) \bigr) \) is a function of
\( \bigl\{ L\bigl( W^{(k)},\tilde{\thetav}_{k}(x),\tilde{\thetav}_{\ell}(x) \bigr)
\bigr\}^{k}_{\ell=1}\),
so the distribution of
\( L\bigl( W^{(k)},\tilde{\thetav}_{k}(x),\hat{\thetav}_{k}(x) \bigr) \)
does not depend on \(\thetavs\).
\end{proof}

A choice of critical values \( \zz_{1} ,\ldots,\zz_{\K-1} \)
can be implemented in the following way:
   \begin{itemize}
        \item Consider first only \( \zz_{1} \) and fix
        \( \zz_{2}=\ldots=\zz_{\K-1}=\infty \), leading to the estimators
        \( \hat{\thetav}_{k}(\zz_{1}, x) \) for \( k = 2,\ldots, \K \).
        The value \( \zz_{1} \) is selected as the minimal one for which
\begin{EQA}[c]
    \frac{1}{\riskt_{r}} \E_{\thetavs}
    L^{r}\bigl( W^{(k)},\tilde{\thetav}_{k}(x),\hat{\thetav}_{k}(\zz_{1},x) \bigr)
    \le
    \frac{\alpha}{\K-1},
    \qquad
    k = 2,\ldots,\K.
\label{zz1prop}
\end{EQA}

        \item With selected \( \zz_{1},\ldots,\zz_{k-1} \), set
        \( \zz_{k+1}=\ldots=\zz_{\K-1}=\infty \).
        Any particular value of \( \zz_{k} \) would lead to the set of parameters
        \( \zz_{1},\ldots,\zz_{k},\infty,\ldots,\infty \) and the family of estimators
        \( \hat{\thetav}_m(\zz_{1},\ldots,\zz_{k},x) \) for \( m=k+1,\ldots,\K \).
        Select the smallest \( \zz_{k} \) ensuring
\begin{EQA}[c]
    \frac{1}{\riskt_{r}} \E_{\thetavs}
    L^{r}\bigl(
        W^{(m)},\tilde{\thetav}_{m}(x),\hat{\thetav}_{m}(\zz_{1},\zz_{2},\ldots,\zz_{k},x)
    \bigr)
    \le \frac{k \alpha}{\K-1}
    \qquad
\label{zzkprop}
\end{EQA}
for all \( m = k+1,\ldots,\K \).
\end{itemize}

Few remarks to the proposed algorithm.
\begin{enumerate}
  \item A value \( \zz_{1} \) ensuring \eqref{zz1prop} always exists because
  the choice \( \zz_{1} = \infty \) yields
  \( \hat{\thetav}_{k}(\zz_{1},x) = \tilde{\thetav}_{k}(x) \) for all \( k \ge 2 \).

  \item The value \( L^{r}\bigl( W^{(m)},\tilde{\thetav}_{m}(x),
  \hat{\thetav}_{m}(\zz_{1},\zz_{2},\ldots,\zz_{k},x) \bigr) \) from \eqref{zzkprop}
  only accumulates the losses associated with the false alarms at the first \( k \) steps
  of the procedure.
  The other checks at further steps are always accepted because the corresponding
  critical values \( \zz_{k+1},\ldots \zz_{\K-1}\) are set to infinity.

  \item The accumulated risk bound \( \frac{k\alpha}{\K-1} \) grows at each step by
  \( \alpha/(\K-1) \).
  This value can be seen as maximal risk associated with the CV's
  \( \zz_{1},\zz_{2},\ldots,\zz_{k} \).

  \item The value \( \zz_{k} \) ensuring \eqref{zzkprop} always exists, because
  the choice \( \zz_{k} = \infty \) yields
\begin{EQA}[c]
    \hat{\thetav}_{m}(\zz_{1},\zz_{2},\ldots,\zz_{k},x)
    =
    \hat{\thetav}_{m}(\zz_{1},\zz_{2},\ldots,\zz_{k-1},x)
\end{EQA}
  for all \( m \ge k \).

  \item
  All the computed values depend on the considered linear parametric model, the
  sequence of bandwidths \( h_{k} \) and the quantile level \( \tau \). They also
  depend on the local point \( x \) via the basis vectors \( \Psi_{i} \). However,
  under common regularity conditions on the design \( X_{1},\ldots,X_{n} \), the
  dependency on \( x \) is rather minor. Therefore, the adaptive estimation procedure can be
  repeated at different points without reiterating the steps  of selecting the
  critical values.

\end{enumerate}

\section{Simulations}
\label{SsimulLQR}
First, we check the critical values at different quantile levels \( \tau = 0.05,
0.5, 0.75, 0.95 \) and for different noise distributions: a) ALD, b) normal
and c) student \( t(3) \).  We also study how  misidentification
of noise distribution affects the critical values.

Second, we compare the performance of our local bandwidth algorithm with two other
bandwidth selection techniques. One proposal is from \citeasnoun{yu:jo:98}, in which
they consider a rule of thumb bandwidth based on the assumption that the quantiles
are parallel, and another comes from \citeasnoun{cai:xu:08}, where an approach based
on a nonparametric version of the Akaike information criterion (AIC) is implemented.

\subsection{Critical Values}
Here we summarize our numerical results on choosing the critical values by the
propagation condition as described in Section~\ref{SpropCV}.
We only consider local constant modeling with \( \dimp = 0 \) and local linear
modeling with \( \dimp=1 \) starting with \( \dimp = 0 \).

Table \ref{tb:tau} shows the critical values with several choices of \( \alpha \)
and \( r \) with \( \tau = 0.75 \), \( m = 10000 \)  Monte Carlo samples, and an
bandwidth sequence \( (8, 14, 19, 25, 30, 36, 41, 52)*0.001 \) scaled to the
interval \( [0,1] \).
Critical values decrease when \( \alpha \) increases, and
increase when \( r \) increases.
     \begin{table}
        \caption{Critical Values with different \( r \) and \( \alpha \)}
          \vspace{0.3cm}
           \centering
            \begin{tabular}{l l l l l l l l l l l}
              \hline\hline
              \( \alpha \) = 0.25,& \( r \) = 0.5&6.123& 2.333& 0.987& 3.678e-05&  0.000 \\ \hline
              \( \alpha \) = 0.5, &\( r \) = 0.5&4.616&1.578& 0.357 &2.472e-05& 0.000  \\ \hline
              \( \alpha \) = 0.6, &\( r \) = 0.5&3.203&0.679& 0.025 &0.006& 7.278e-05  \\ \hline
              \( \alpha \) = 0.25, &\( r \) = 0.75&9.127& 3.288& 1.031& 0.126& 5.675e-05    \\ \hline
              \( \alpha \) = 0.25, &\( r \) = 1& 12.75& 4.280& 1.224& 1.095e-04 &0.000 \\
              \hline \hline
            \end{tabular}
             \label{tb:tau}
      \end{table}

Table \ref{tb:level} displays critical values for
different \( \tau \), with \( \alpha = 0.25 \), \( r = 0.5 \), \( m = 10000 \) Monte
Carlo samples, a bandwidth sequence \( \mathfrak{H}_{1} = (8, 14, 19, 25, 30, 36,
41, 52)*0.001 \), and \( \ND(0,1) \) noise. Critical values behave similarly
for different \( \tau \).

     \begin{table}
         \caption{Critical Values with Different \( \tau \)}
           \vspace{0.3cm}
           \centering
             \begin{tabular}{l l l l l l l l l l}
               \hline\hline
               \( \tau \) = 0.05&6.464& 2.204& 0.620& 3.345e-05&0.000\\ \hline
               \( \tau \) = 0.5&7.997&3.089& 0.986 &0.300e-05& 0.000 \\ \hline
               \( \tau \) = 0.75&9.203&3.910& 1.106 &0.123& 7.254e-05  \\ \hline
               \( \tau \) = 0.95&8.589& 5.452& 1.904& 0.334& 1.203e-05 \\
               \hline \hline
             \end{tabular}
             \label{tb:level}
     \end{table}
Table \ref{tb:level1} displays the critical values for three alternative bandwidth
sequences:
\begin{EQA}
    \mathfrak{H}_{1}
    &=& (8, 14, 19, 25, 30, 36, 41, 52)*0.001
    \\
    \mathfrak{H}_{2}
    &=&
    (8, 16, 25, 36, 49, 63, 79, 99)*0.001
    \\
    \mathfrak{H}_{3}
    &=&
    (5, 8, 14, 19, 27, 36,46, 58)*0.001
\label{bandseqqr}
\end{EQA}
with \( \alpha = 0.25 \), \( r = 0.5 \), and \( \tau = 0.85 \).
\begin{table}[h]
      \caption{Critical Values with Different Bandwidth Sequences}
        \vspace{0.3cm}
         \centering
         \begin{tabular}{l l l l l l l l l l}
           \hline\hline
           \( \mathfrak{H}_{1} \)&11.33& 1.243& 6.933e-05& 0.000&0.000\\ \hline
           \( \mathfrak{H}_{2} \)&18.39&6.479& 2.230&0.469 &8.738e-05  \\ \hline
           \( \mathfrak{H}_3 \)&6.123& 2.333& 0.987& 3.678e-05&0.000  \\ \hline
            \hline
           \end{tabular}
            \label{tb:level1}
\end{table}
Although the critical values differ for different bandwidth sequences, \( \alpha \),
\( r \) and \( \tau \), they indicate the same patterns (finite and decreasing).

We simulate from different data generating processes, namely the distribution of
\( \varepsilon_{i} \) (given by the density \(\logdens(\cdot)\)) does not necessarily coincide with the likelihood
(\( ALD_{\tau} \)) taken to simulate critical values. Table \ref{tb:level3} presents
critical values simulated under \( t(3) \), \( \ND(0,1) \) and \( ALD_{\tau} \). The
critical values show the same trend with some differences, so we conclude that a
misidentification of error distribution would not significantly contaminate the
confidence sets.
    \begin{table}[h]
      \caption{Critical Values with Different Noise Distributions}
       \vspace{0.3cm}
       \centering
       \begin{tabular}{l l l l l l l l l }
          \hline\hline
          \( \ND(0,1) \)&11.50&4.924& 2.514 &1.313& 2.765e-05  \\ \hline
          \( ALD_\tau \)&14.05& 6.554& 3.304& 1.443& 5.879e-05 \\ \hline
          \( t(3) \)&15.42&8.707& 2.370&0.342& 3.898e-05 \\
          \hline \hline
       \end{tabular}
       \label{tb:level3}
    \end{table}

    In Table \ref{tb:level4}, critical values are shown in the same circumstances as in Table \ref{tb:level3} for the local linear case. Since introducing one more variable (trend), critical values doubled or tripled compared to the local constant case. The behavior with respect to tail functions stays the same.
   \begin{table}[h]
      \caption{Critical Values with Different Noise Distributions in Local Linear Case}
       \vspace{0.3cm}
       \centering
         \begin{tabular}{l l l l l l l r l }
           \hline\hline
           \( \ND(0,1) \)& 29.97&58.64& 43.21&33.41& 19.43 & 07.40 \\ \hline
           \( ALD(0.5) \)&45.28& 74.51& 66.43& 50.42& 31.42&13.50\\ \hline
           \( t(3) \)& 51.77&84.94& 59.28&44.99& 29.07 & 11.57 \\
           \hline \hline
         \end{tabular}
         \label{tb:level4}
    \end{table}

\subsection{Comparison of Different Bandwidth Selection Techniques}
We illustrate our proposal by considering  \( x \in [0,1] \), \( \tau= 0.75 \).
The sample with (\( n = 1000 \)) are simulated under three scenarios:
\begin{EQA}
    \fs^{[1]}(x)
    &=&
    \left\{
        \begin{array}{rl}
                0&  \mbox{if \( x \in [0,0.333] \)};\\
                8&  \mbox{if \( x \in (0.333,0666] \)};\\
               -1&  \mbox{if \( x \in (0.666,1] \)}
        \end{array}
    \right.
    \\
    \fs^{[2]}(x)
    &=&
    2x(1+x) ,
    \\
    \fs^{[3]}(x)
    &=&
    \sin(k_{1} x) + \cos(k_{2} x)\Ind\{x \in (0.333,0.666)\} + \sin(k_{2} x)
\end{EQA}
The noise distributions are: \( \ND(0,0.03), ALD_{\tau}, t(3) \).

    Figure \ref{fig:simconstant} presents pictures on comparisons of different
    estimators in the local constant case. Figure \ref{fig:lineartrend} and
    \ref{fig:map3} show in the local linear case the estimators of the functions (\( \hat{f}(x) \)) and
    its first derivatives as well. Our technique provides closer fits to the
    true curve (\( f(x) \)) than methods with a global fixed bandwidth, especially in the
    presence of jump. Table \ref{tb:level7}, which shows the averaged absolute
    errors for the four methods, further confirms our conclusion.

   \begin{table}[h]
     \caption{Comparison of Monte Carlo errors, averaged over 1000
     samples}
     \vspace{0.3cm}
       \centering
       \begin{tabular}{l l l l l}
         \hline\hline
         &Fixed bandw & Local constant & Local linear & Fixed bandw (Cai) \\
         \( \fs^{[1]}(x) \)      &0.654         & 0.172          & 0.169         &0.378\\ \hline
         \( \fs^{[2]}(x) \)      &0.206         & 0.008          & 0.008         &0.245\\ \hline
         \( \fs^{[3]}(x) \)      &0.137         & 0.021          & 0.019         &0.123 \\
         \hline \hline
       \end{tabular}
       \label{tb:level7}
    \end{table}

 Table \ref{tb:level8} offers a further analysis for misspecified error distributions. Specifically, to evaluate the accuracy of our estimation for error distributions generated differently than the ALD density. Table \ref{tb:level8} gives \( L_{1} \) errors between \( \hat{f}(\cdot)\) (with critical values simulated from \( ALD_{\tau} \)) and \( f(\cdot) \), from which we
conclude that mis-specification of error distributions would not contaminate our results
significantly.

   \begin{table}[h]
   \caption{Comparison of error mis-specification, errors are calculated averaged over \( 1000 \) samples}
   \vspace{0.3cm}
   \centering
    \begin{tabular}{l l l l }
       \hline\hline
       &Local constant \{\( \ND(0,1) \)\}& Local constant \{\( t(3) \)\}& Local linear \{\( \ND(0,1) \)\} \\
       \( \fs^{[1]}(x) \)      &0.252         & 0.220          & 0.169       \\ \hline
       \( \fs^{[2]}(x) \)      &0.070         & 0.016          & 0.043         \\ \hline
       \( \fs^{[3]}(x) \)      &0.009         & 0.021          & 0.019       \\
   \hline \hline
   \end{tabular}
    \label{tb:level8}
   \end{table}

    \begin{figure}[htbp]
      \centering
      \mbox{\subfigure[Normal]{\mygraphics{0.36}{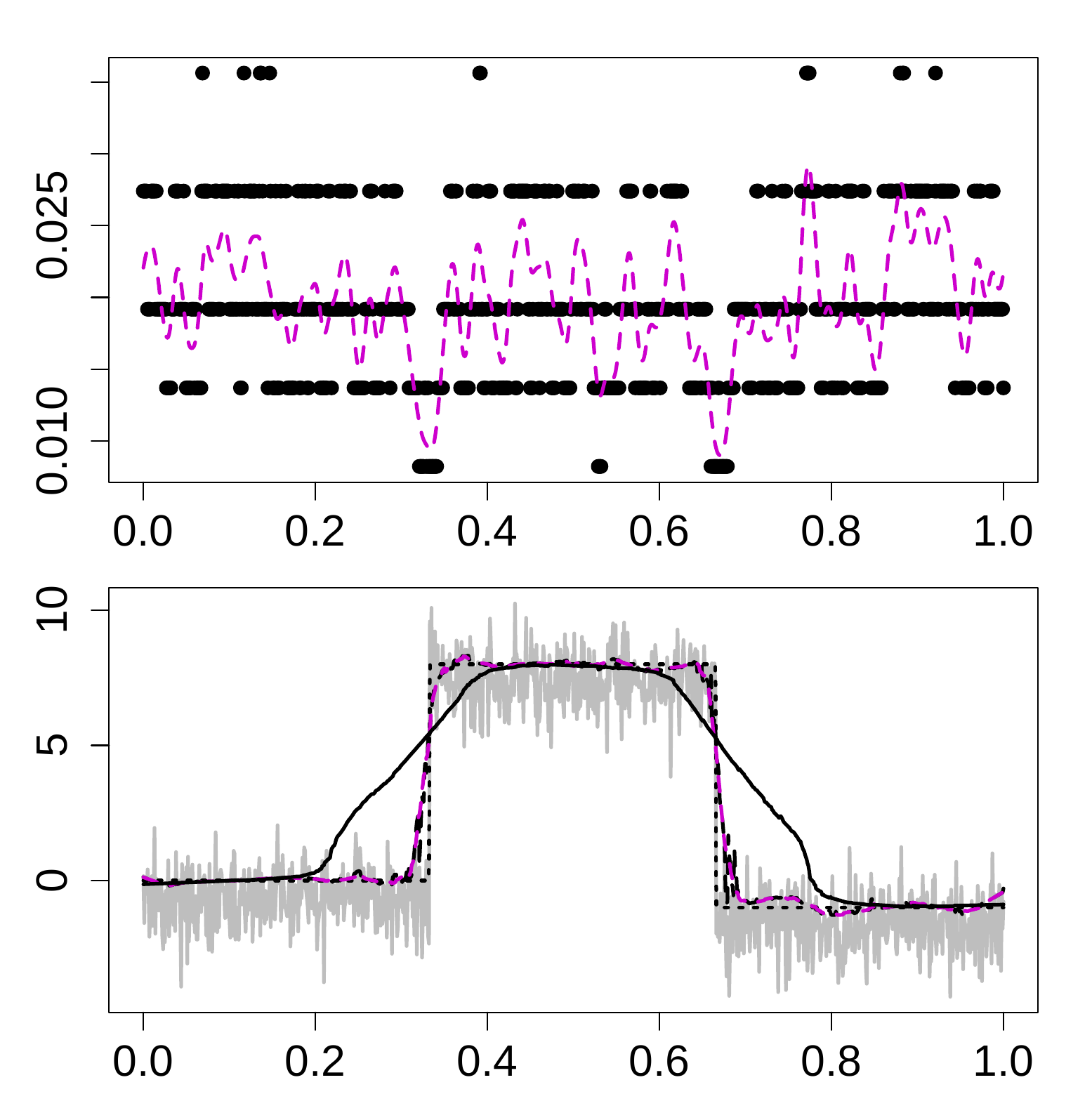}\label{fig:subfig1}}
            \subfigure[Normal]{\mygraphics{0.39}{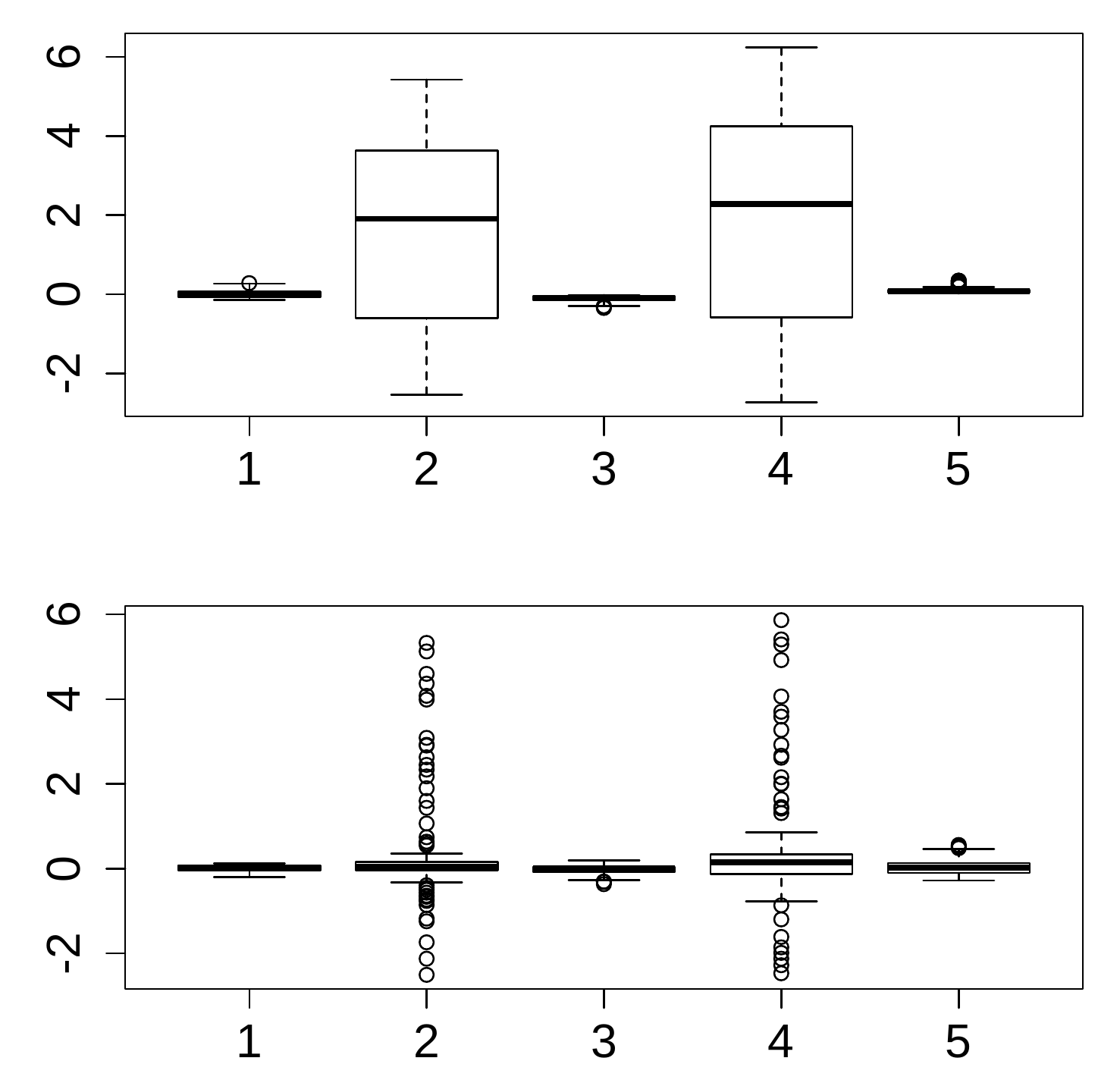}\label{fig:subfig2}}}
      \mbox{\subfigure[ALD(0.5)]{\mygraphics{0.36}{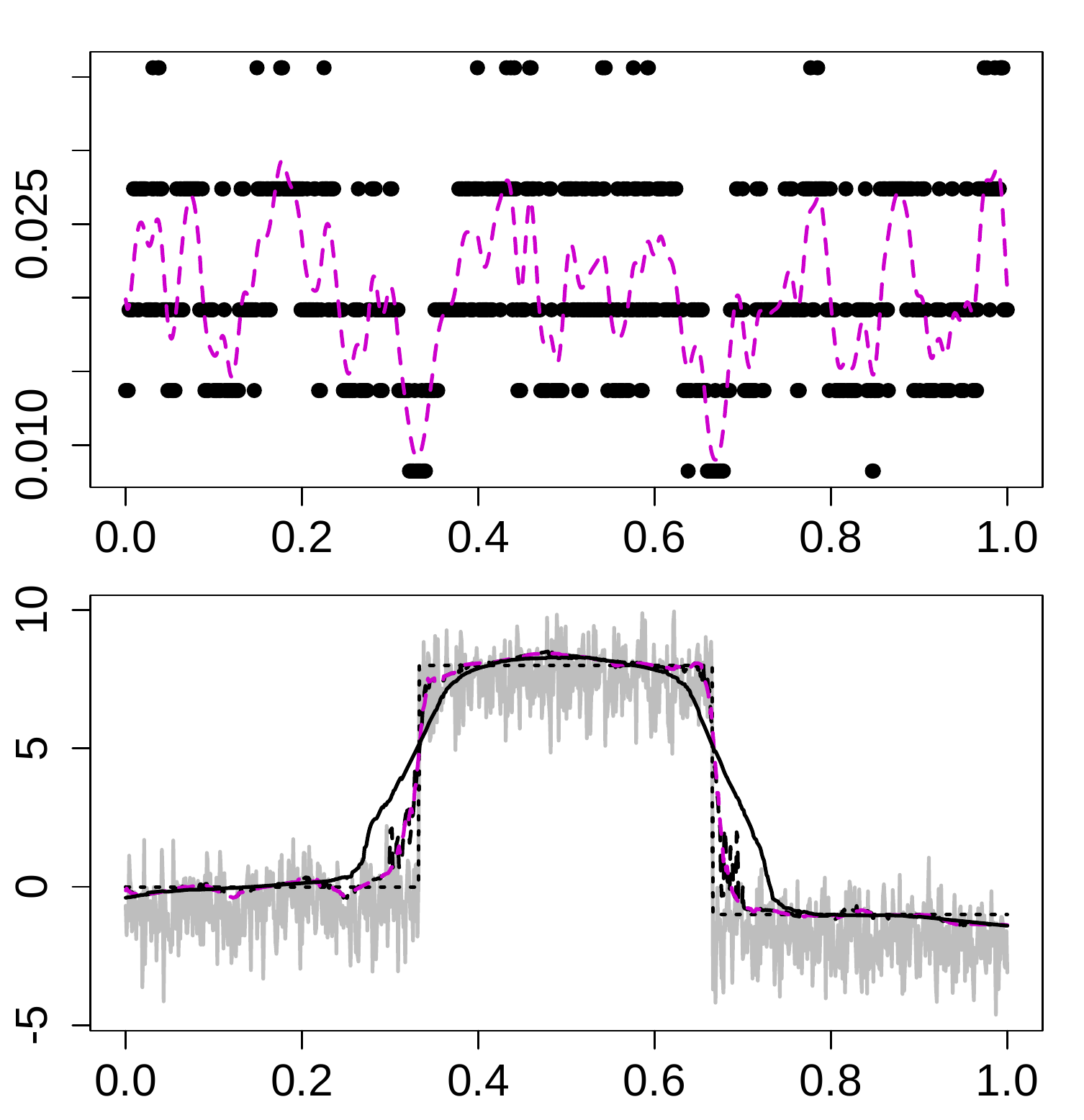}\label{fig:subfig3}}
            \subfigure[ALD(0.5)]{\mygraphics{0.39}{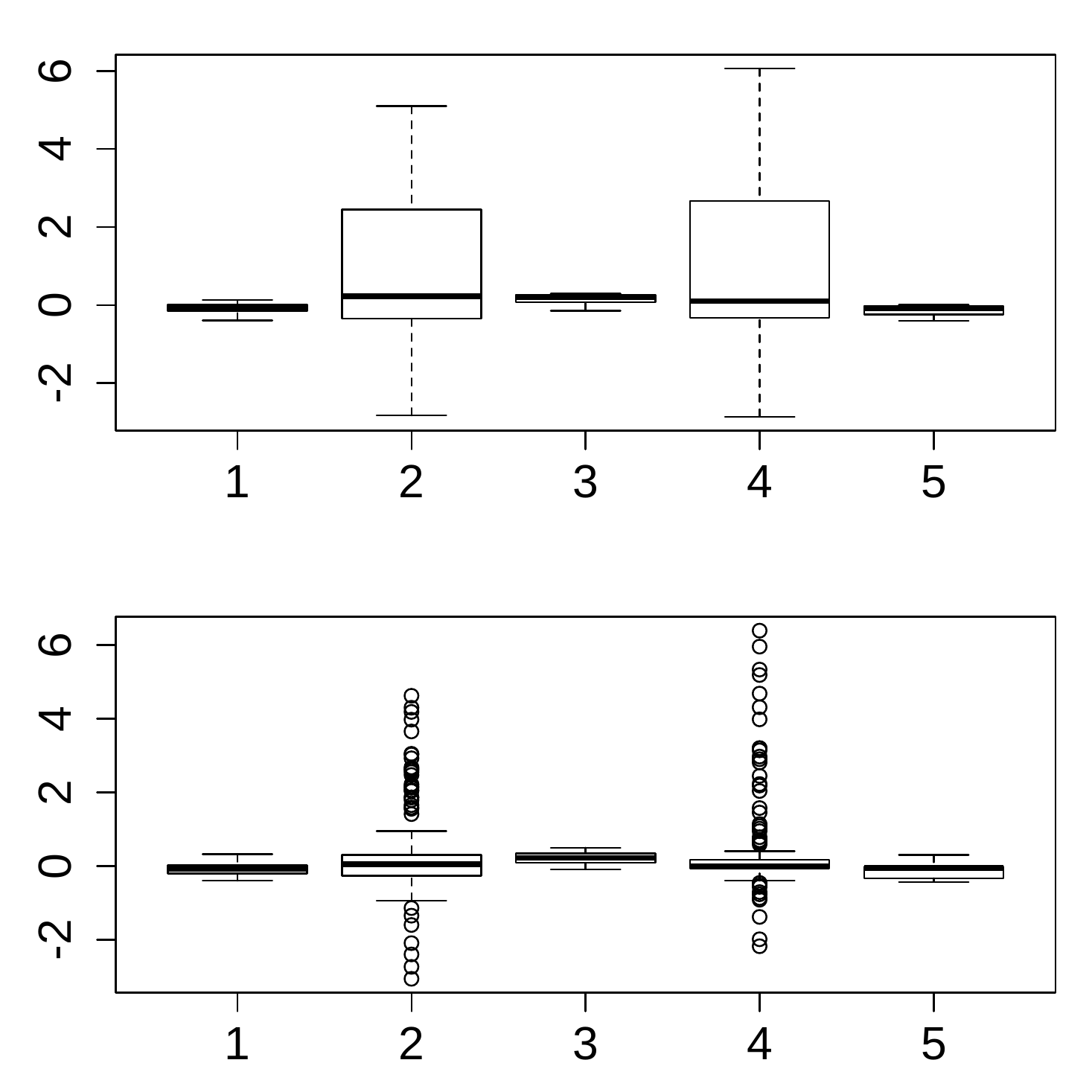}\label{fig:subfig4}}}
      \mbox{\subfigure[t(3)]{\mygraphics{0.36}{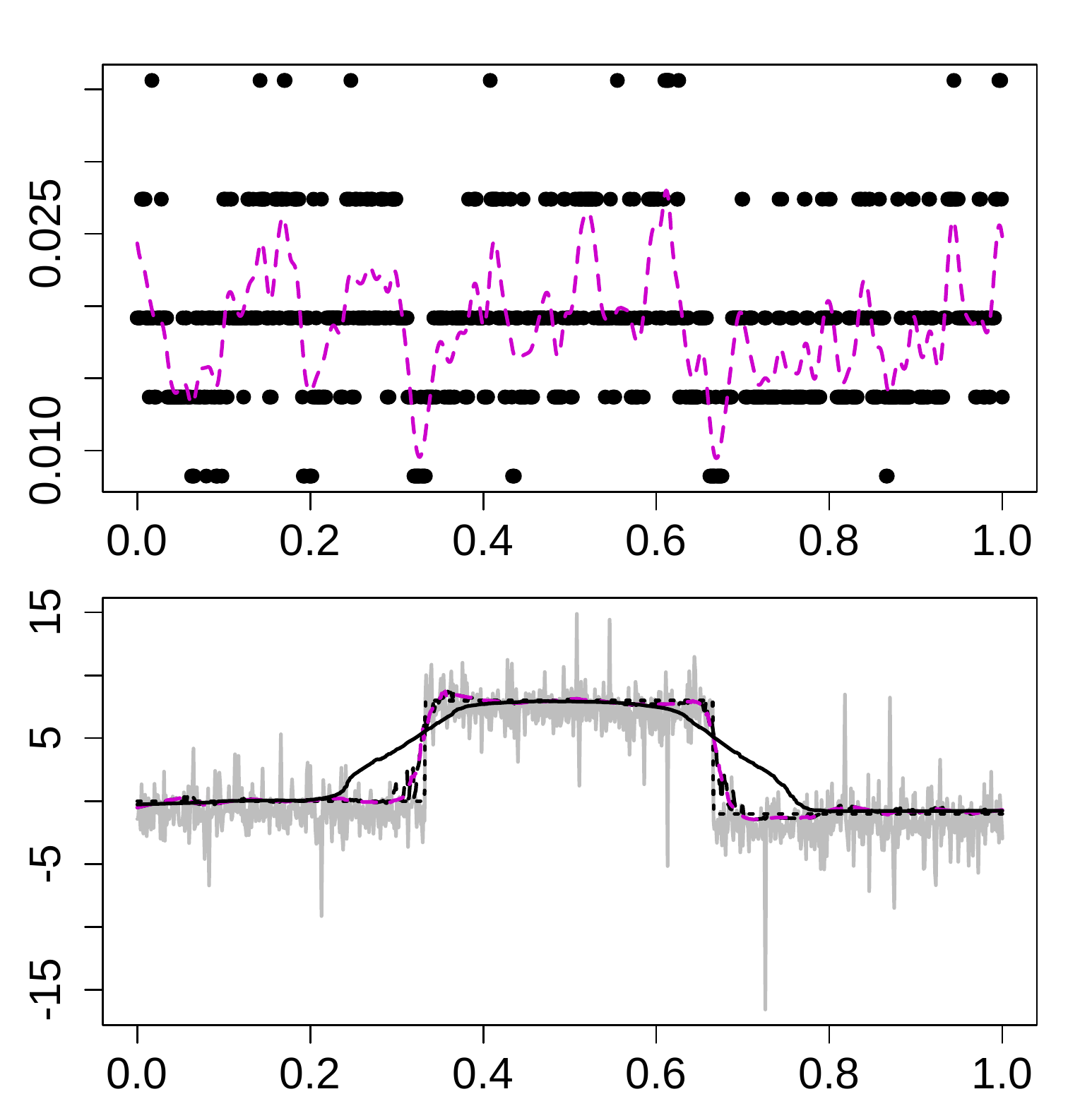}\label{fig:subfig5}}
            \subfigure[t(3)]{\mygraphics{0.36}{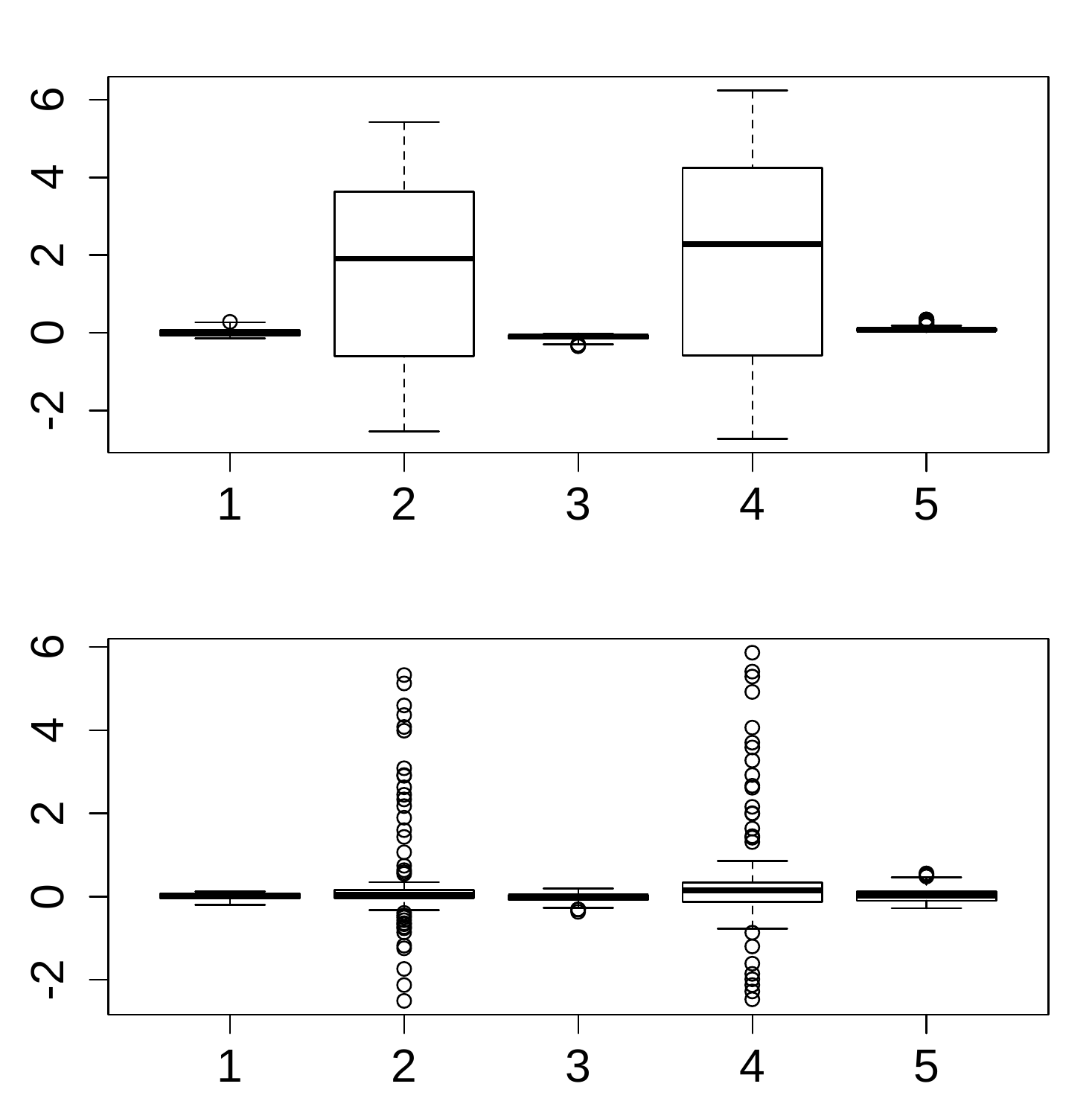}\label{fig:subfig6}}}
      \caption{The bandwidth sequence (upper left panel), the smoothed bandwidth (magenta dashed); the data with noise (grey, lower left panel), the adaptive estimation of \( 0.75 \) quantile (dashed black), the quantile smoother with fixed optimal bandwidth \( =0.06 \) (solid black), the estimation with smoothed bandwidth (dashed magenta); boxplot of block residuals fixed bandwidth (upper right), adaptive bandwidth (lower right)}\label{fig:simconstant}
    \end{figure}

    \begin{figure}[htbp]
        \centering
        \mbox{\subfigure[]{\mygraphics{0.44}{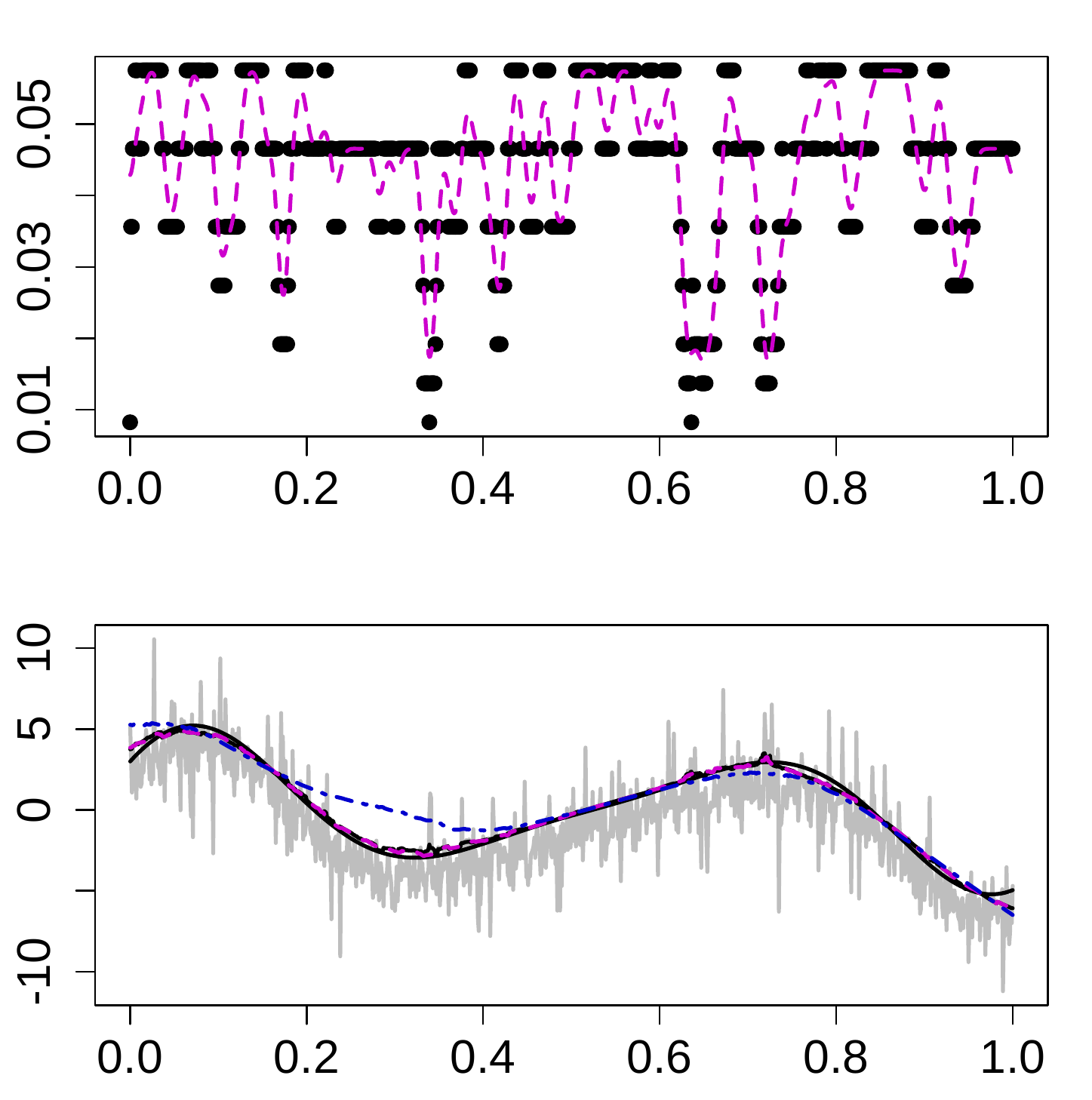}\label{fig:subfig7}}
              \subfigure[]{\mygraphics{0.44}{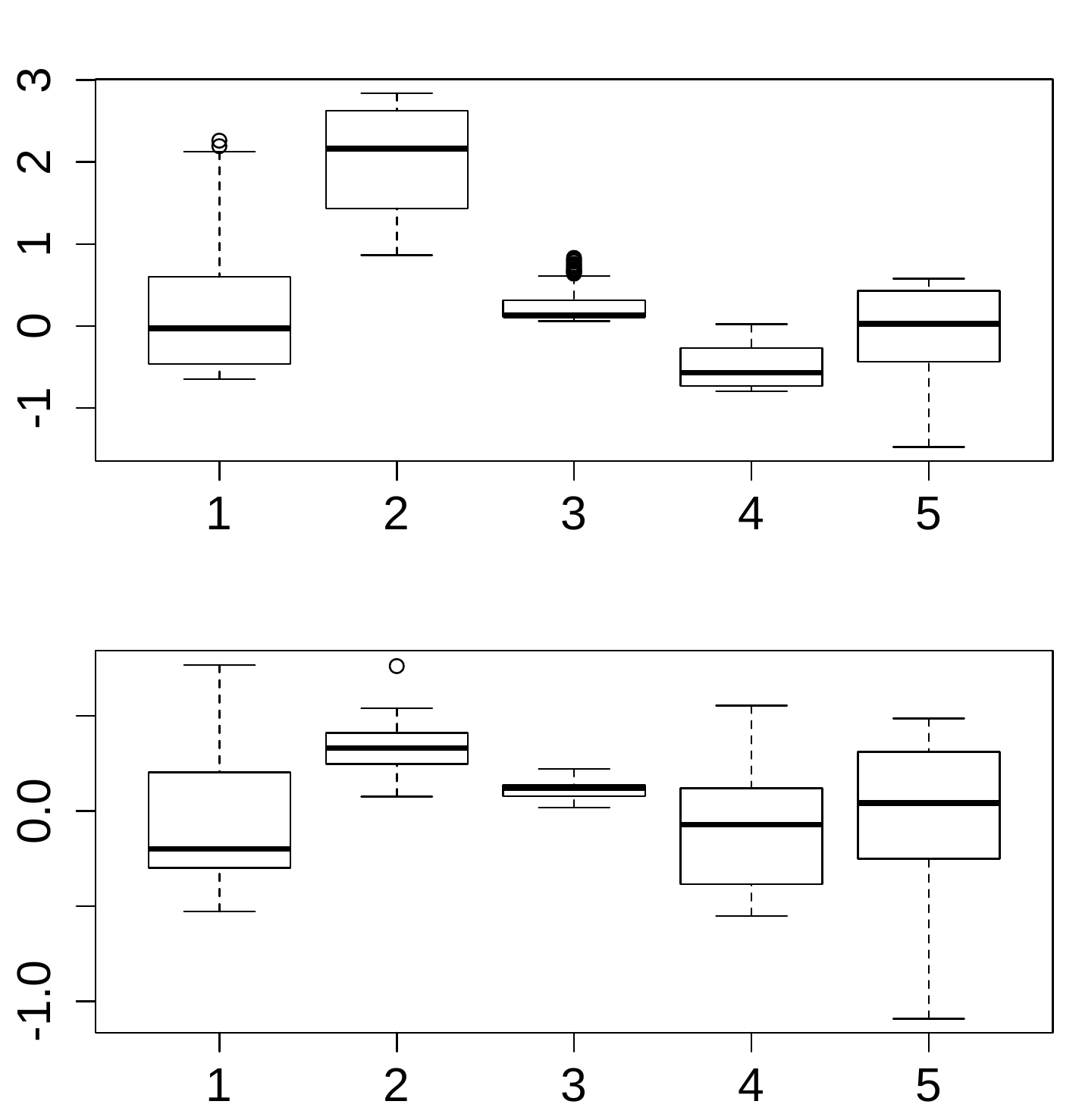}\label{fig:subfig8}}}
        \caption{ The bandwidth sequence (upper left panel), the smoothed bandwidth sequence (dashed magenta); the observations (grey, lower left panel), the adaptive estimation of \( 0.75 \) quantile (dotted black), the true curve (solid black), the quantile smoother with fixed optimal bandwidth \( =0.063 \) (dashed dotted blue), the estimation with adaptively smoothed bandwidth (dashed magenta); the blocked error of the adaptive estimator (lower right); the fixed estimator (upper right).}\label{fig:lineartrend}
    \end{figure}

     \begin{figure}[htbp]
         \centering
         \mbox{\subfigure[]{\mygraphics{0.44}{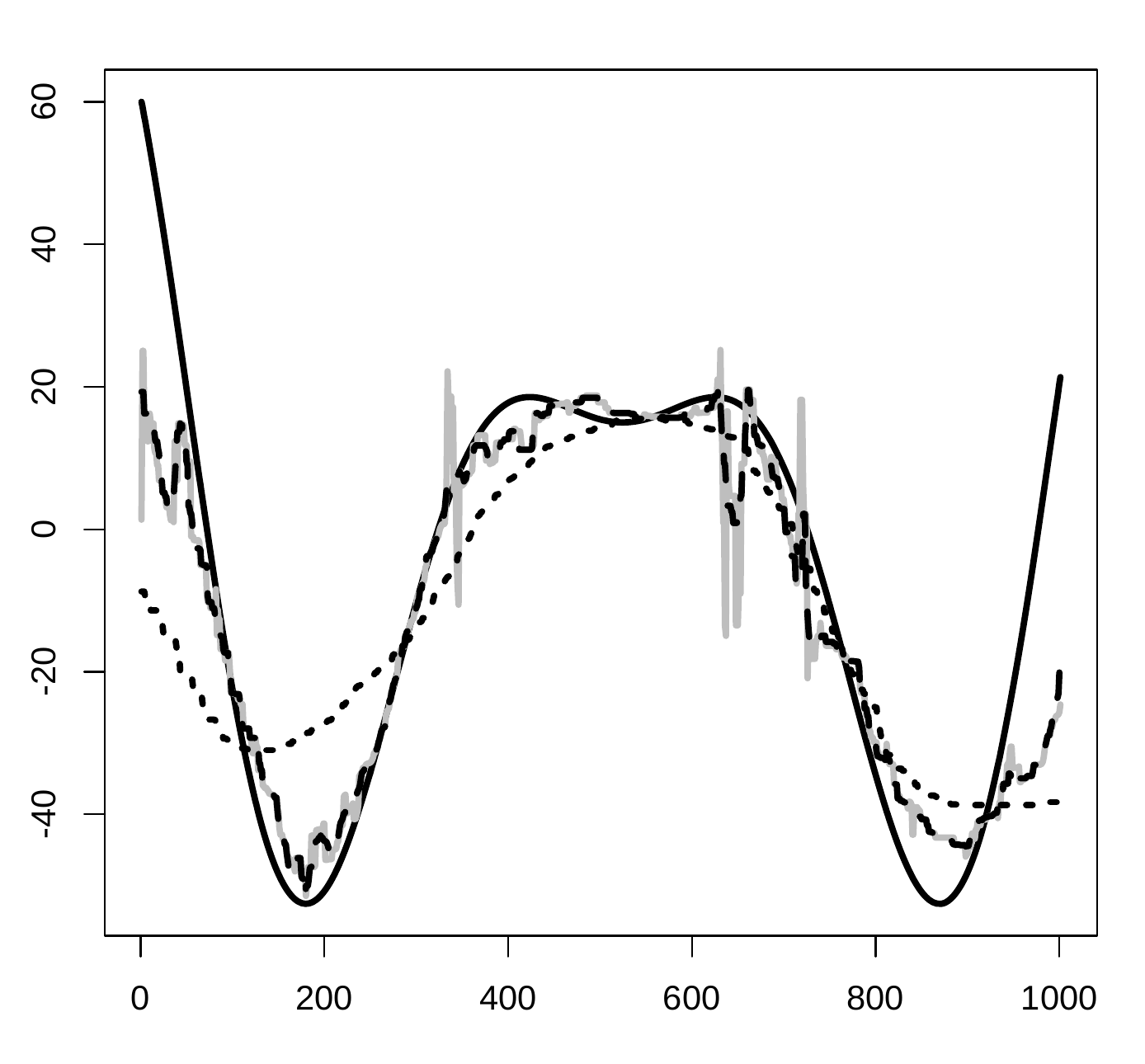}\label{fig:subfig1}}
               \subfigure[]{\mygraphics{0.38}{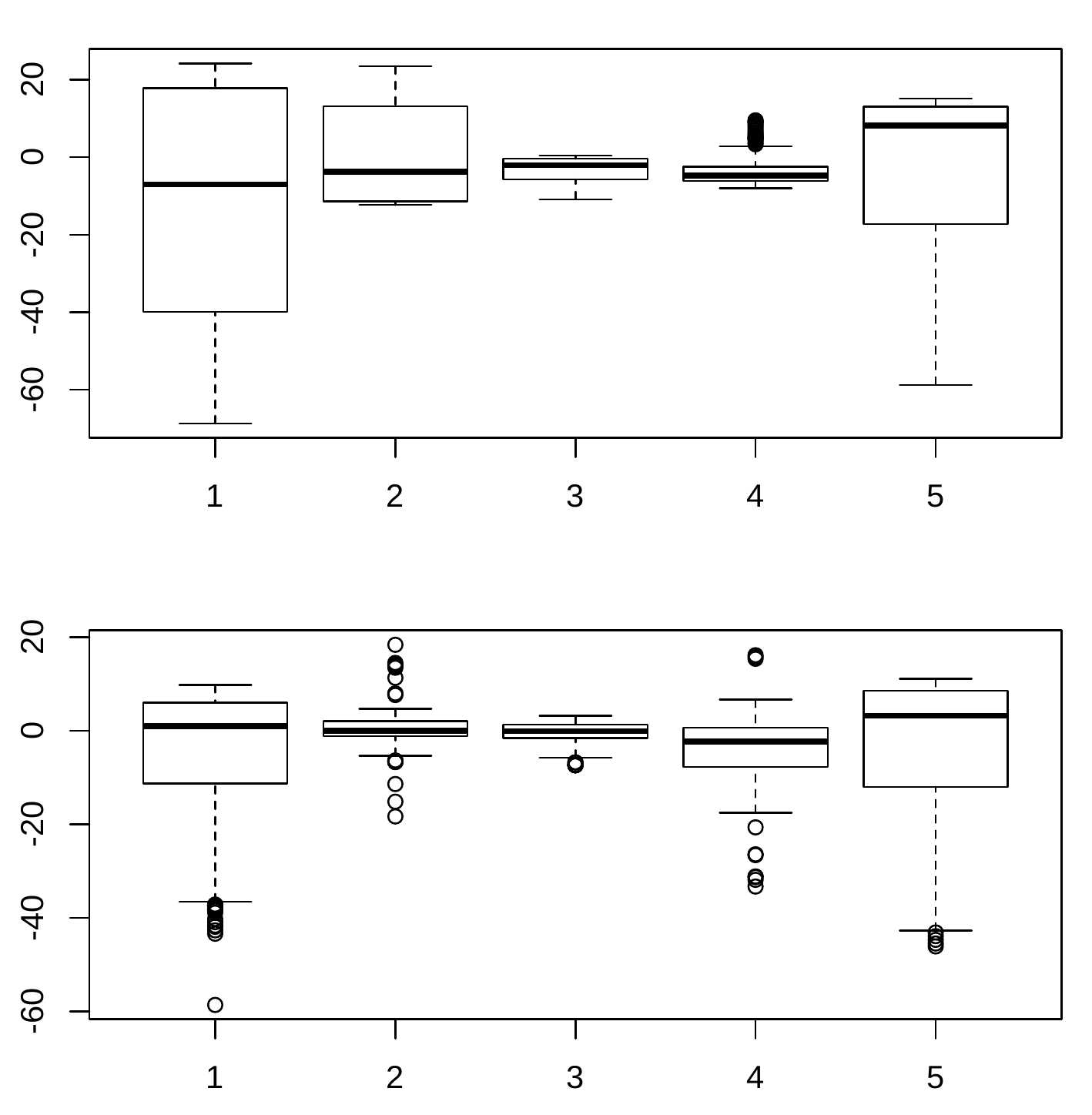}\label{fig:subfig2}}}
         \caption{The adaptive estimation of first derivative of the above quantile function (left panel grey), the true curve (solid black), the estimation with smoothed bandwidth (dashed black), the quantile smoother with fixed optimal bandwidth \( =0.045 \) (dotted black); the blocked error of the adaptive estimator (lower right); the fixed estimator (upper right).
         }\label{fig:map3}
     \end{figure}

\section{Applications}
\label{SapplLQR}
In the study of financial products, it is very important to detect and understand tail
dependence among underlyings such as stocks. In particular, the tail dependence
structure represents the degree of dependence in the corner of the lower-left quadrant
or upper-right quadrant of a bivariate distribution. \citeasnoun{ha:da:01} and
\citeasnoun{em:mc:st:99} provide good access to the literature on tail dependence and
Value at Risk. With the adaptive quantile technique, we provide an alternative approach
to studying tail dependence.

The correlation is calibrated from real data as given in Figure \ref{fig:emp2}, where \(
X \) is standardized return from stock ``clpholdings'' from Hong Kong Hangseng Index,
and \( Y \) is return from stock ``cheung kong''. The conditional quantile function is
linear, for example, \( X_{1} \in  \ND(u_{1}, \sigma_{1}) \) and \( X_{2} \in
\ND(u_{2}, \sigma_{2}) \), the conditional quantile function \( \alpha \) is:
\begin{EQA}[c]
    \fs(x)
    =
    \varphi^{-1}(\alpha)(\sigma_{2}- \sigma_{12}^{2}/\sigma_{1}) + u_{i}
    + \sigma_{12}\sigma_{2}^{-1}(x - u_{2}).
\end{EQA}
Figure \ref{fig:emp2}  show the empirical conditional quantile
curves actually deviate from the one calculated from normal distributions, which implies
non normality. The motivation of adaptive bandwidth selection is clear to see from
Figure \ref{fig:emp2}, the dependency structure change is more
obvious compared with the fixed bandwidth curve. Moreover, the flexible adaptive curve
is not likely to be a consequence of overfitting since it mostly lies in the confidence
bands produced by fixed bandwidth estimation, see \citeasnoun{wo:so:10}.
%

We measure the deviation from normality by accumulated \( L_{1} \) distance  to  the normal
fitting and examine different combination of stocks from Hong Kong Hangseng Index. The
results is summarized in Table \ref{tb:stock8}.

%

     \begin{figure}
         \begin{center}
         \mygraphics{0.5}{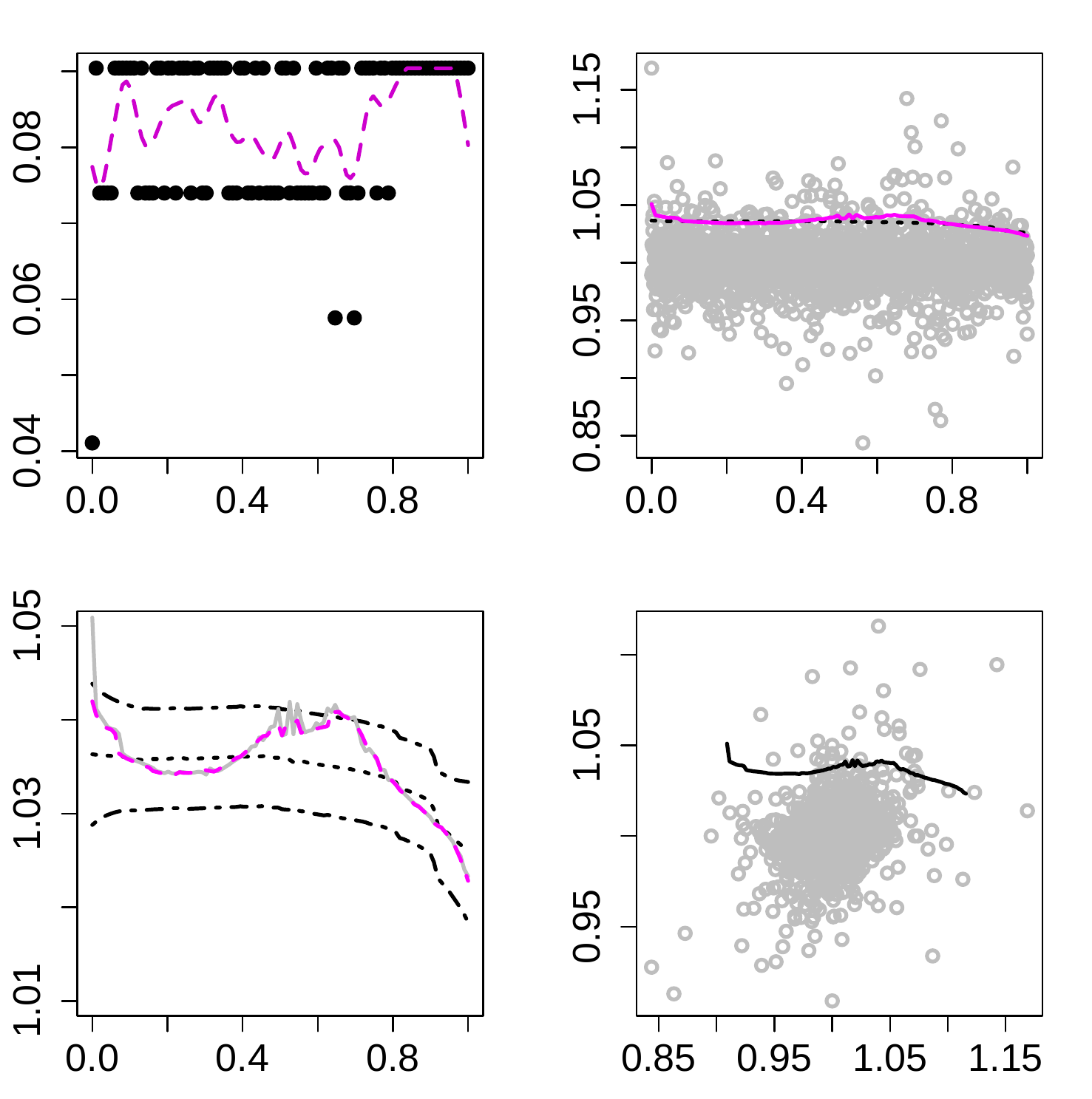}\\
           \caption{The bandwidth sequence with smoothed bandwidth curve(upper left
           panel), the smoothed bandwidth (dashed magenta); Scatter plot of stock
           returns  (upper right panel), the adaptive estimation of \( 0.90 \) quantile
           (solid magenta), the quantile smoother with fixed optimal bandwidth \( =0.15
           \) (dotted black); fixed bandwidth curve (dotted black), adaptive bandwidth
           curve (grey), the estimation with smoothed bandwidth (dashed magenta),
           confidence band (dashed black) (lower left panel);  adaptive bandwidth with
           normal scale (lower right). }
           \label{fig:emp2}
          \end{center}
     \end{figure}
%
%
%


    \begin{table}[h]
      \caption{Summary of deviation from normality }
      \vspace{0.3cm}
        \centering
        \begin{tabular}{l l l l l}
              \hline\hline
                            &Chalco        & Cosco pacific  & Bank of China  \\
       New world devo       &\( 0.252 \)         & \( 0.220 \)          & \( 0.169 \)       \\ \hline
       Sino land            &\( 0.070 \)         & \( 0.016 \)          & \( 0.043 \)         \\ \hline
       Swire pacific A      &\( 0.009 \)         & \( 0.021 \)          & \( 0.019 \)       \\
             \hline \hline
        \end{tabular}
        \label{tb:stock8}
     \end{table}

Another application of quantile function estimation is in temperature data analysis,
which is of key interest for  pricing temperature derivatives. Quantile regression can
provide a more flexible and comprehensive approach to understand the temperature risk
drivers defined in (\refeq{eq:no2}).

Denote daily temperature as \( T\mapsto(t,j) \), with \( t=1,\cdots ,\tau=365 \) days,
\( j=0,\cdots, J \) years. The time series decomposition for \( T_{t,j} \) is given as:
\begin{EQA}
        X_{t,j}
        &=&
        T_{t,j}-\Lambda_{t}
        \\
        X_{t,j}
        &=&
        \sum^{L}_{l=1} \beta_{l}X_{t-l,j} + \sigma_{t} \eta_{t,j}
        \\
        \eta_{t,j}
        & \sim &
        \ND(0,1),
        \\
        \varepsilon_{t,j}
        & \eqdef &
        \sigma_{t}\varepsilon_{t,j}
        \\
        \hat{\varepsilon}_{t,j}
        & \eqdef &
        X_{365j+t}- \sum^{L}_{l=1} \hat{\beta}_{l} X_{365j+t-l}
        \label{eq:no2}
\end{EQA}
where \( T_{t,j} \) is the temperature at day \( t \) in year \( j \), \( \Lambda_{t} \)
denotes the seasonality effect and \( \sigma_t \) the seasonal volatility.

We are interested specifically in the stochastic risk drivers \( \varepsilon_{t,j} \),
Figure \ref{fig:weatherres} presents a time series plot of \(
\hat{\varepsilon}_{t,j}/\hat{\sigma}_t \), and the estimated \( 90\% \) quantile
function. By zooming in the curve, we observe a very interesting phenomena: a changing
of the trend of the standardized residual over years.

     \begin{figure}
      \centering
      \mygraphics{0.5}{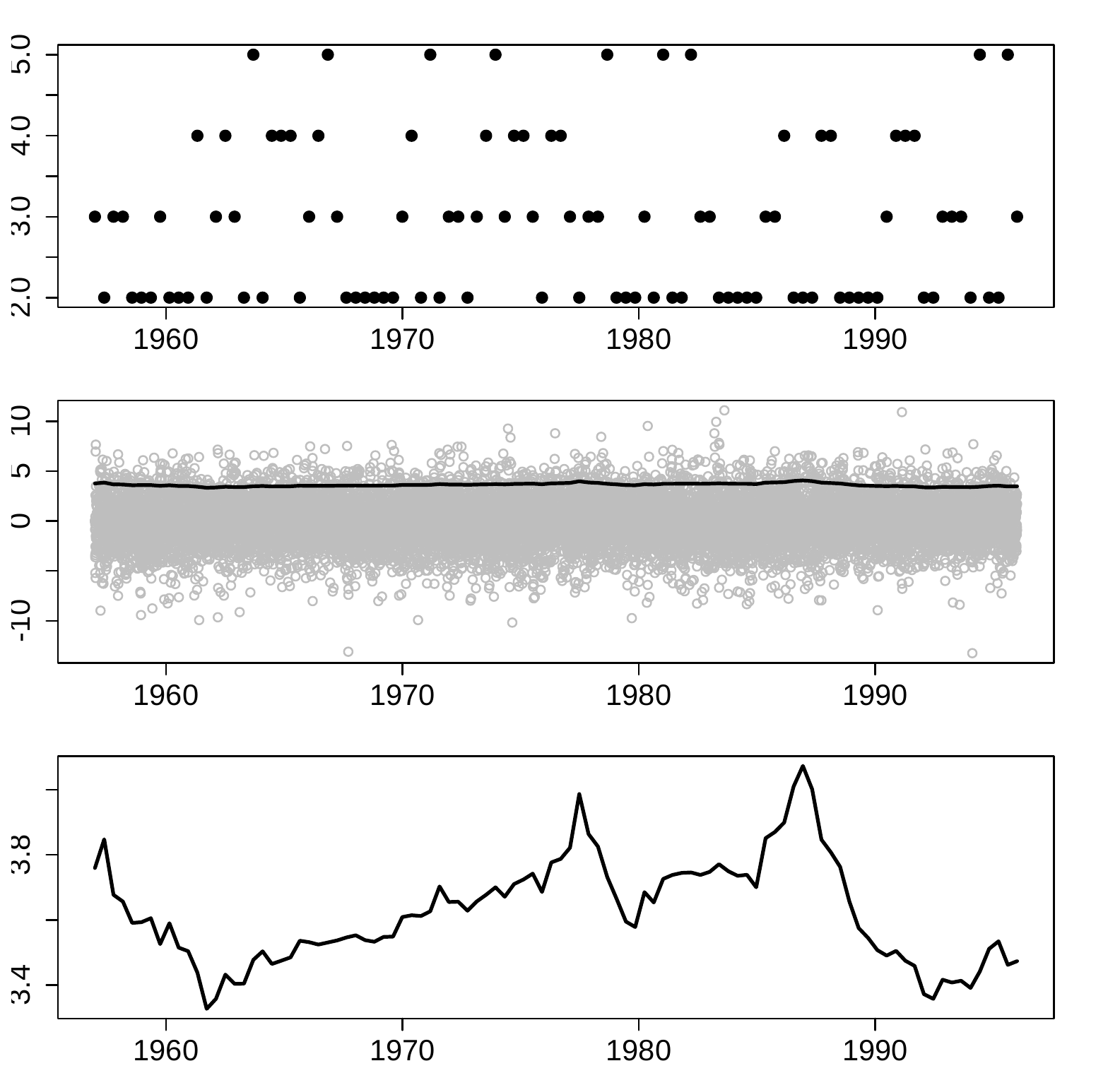}
       \caption{Plot of quantile curve for standardized weather residuals over \( 40 \)
       years in Berlin, \( 95\% \)  quantile, \( 1967-2006 \). Selected bandwidths
       (upper), observations with estimated the quantile function (middle), the
       estimated the quantile function (lower).}
       \label{fig:weatherres}
     \end{figure}

To further understand the risk factors, we analyze the quantile functions of \(
\hat{\varepsilon}_{t,j}^{2} \) over \( 12 \) years, and average over  \( 4 \) years  for
comparison, see Figure \ref{fig:09quantile} and Figure \ref{fig:10quantile}. The
differences between Berlin and Kaoshiung are easy to see, the variance function has a high
value for Jan-Feb, while for Berlin the peaks  and to come more in summer. Moreover, there is a
tendency for Kaoshiung to be more volatile over time, but this phenomenon does not
appear in Berlin.

     \begin{figure}
        \begin{center}
        \mygraphics{0.5}{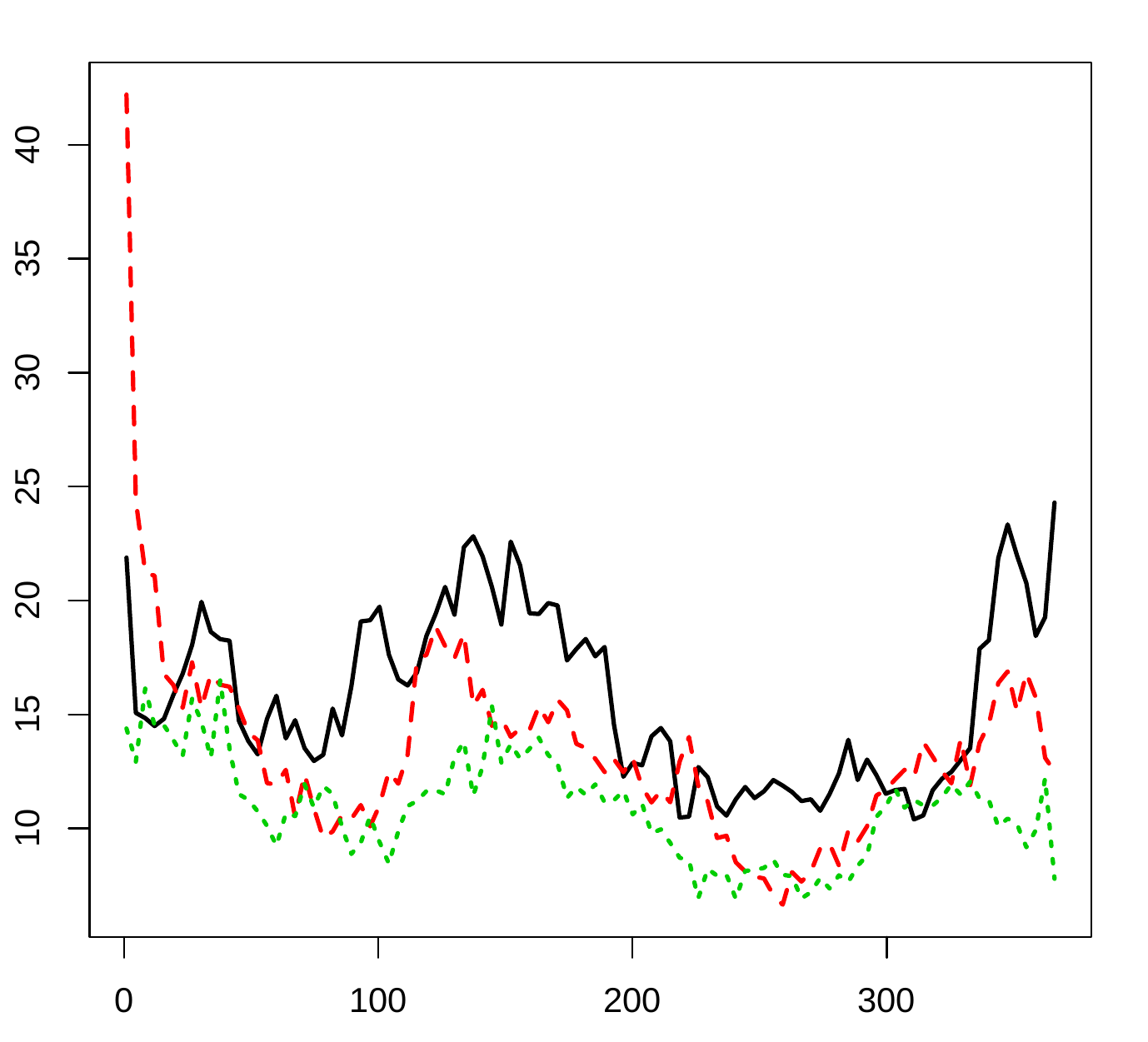}\\
        \caption{Estimated \( 90\% \) quantile of variance functions, Berlin, average
        over \( 1995-1998 \),  \( 1999-2002 \) (red), \( 2003-2006 \)
        (green)}\label{fig:09quantile}
        \end{center}
     \end{figure}

     \begin{figure}
        \begin{center}
        \mygraphics{0.5}{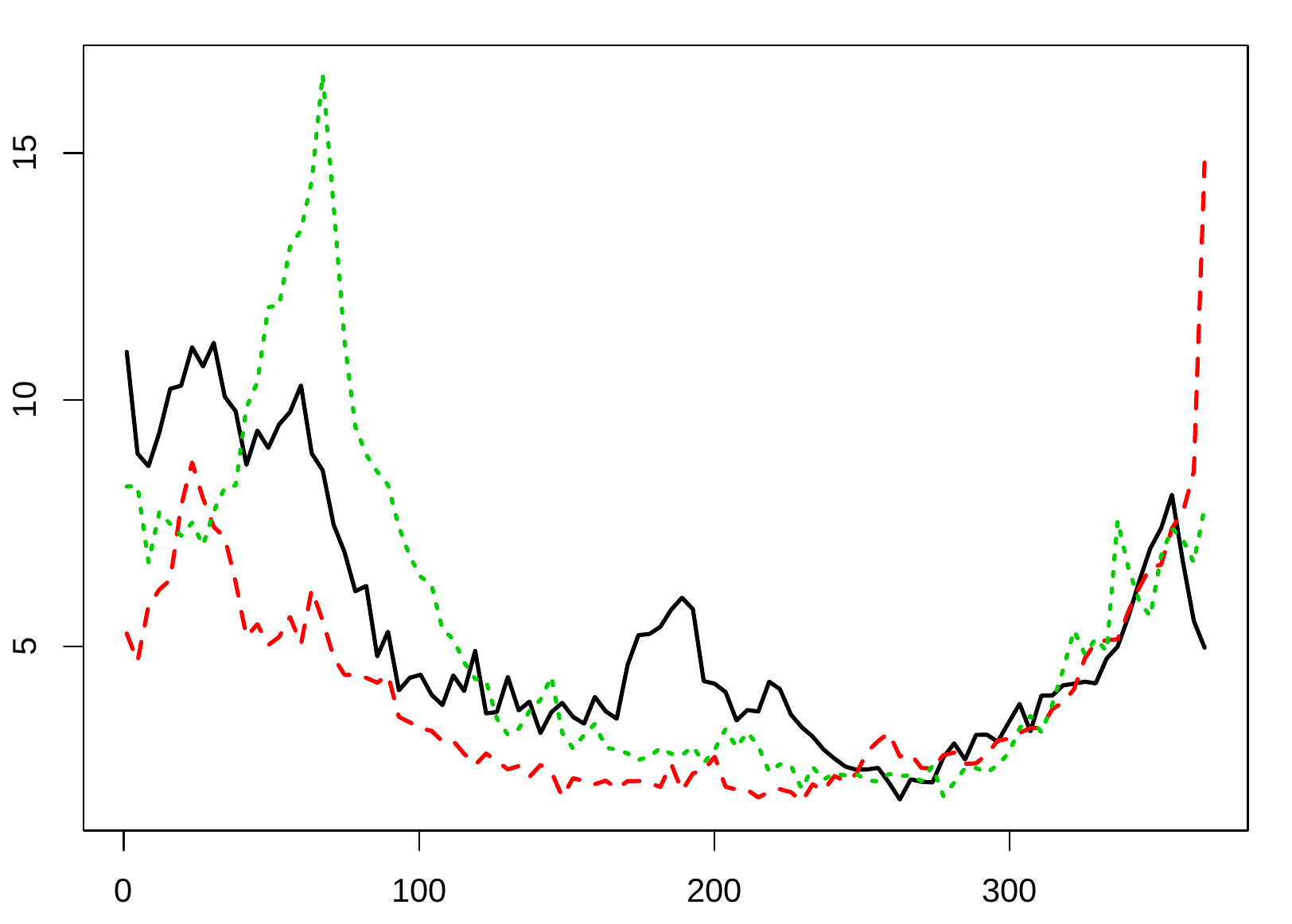}\\
        \caption{Estimated \( 90\% \) quantile of variance functions, Kaoshiung, average
        over \( 1995-1998 \),  \( 1999-2002 \) (red), \( 2003-2006 \)
        (green)}\label{fig:10quantile}
       \end{center}
     \end{figure}


In addition, our technique can also be used to estimate the function \( \sigma_t \).
We propose four methods: 1, Estimate the median curve of \( \hat{\varepsilon}_{t,j} \)
using adaptive technique. 2, Take  \( \{\hat{f}_{\varepsilon,0.75}-
\hat{f}_{\varepsilon,0.25}\}/1.34 \) (\( 1.34 \) is the inter quartile range of a
standard normal distribution), where \( \hat{f}_{\varepsilon,0.75} \), \(
\hat{f}_{\varepsilon,0.25} \) are the adaptive quantile estimators. 3, Estimate the mean curve of
\( \hat{\varepsilon}_{t,j} \) using adaptive bandwidth. 4, Estimate the mean function of
\( \hat{\varepsilon}_{t,j} \) with a fixed bandwidth. The aforementioned methods are
compared by testing the normality of \( \hat{\eta}_{t,j} =
\hat{\varepsilon}_{t,j}/\hat{\sigma}_t  \). As according to our normal assumption on \(
\eta_{t,j} \), a good estimation for \( \sigma_t \) leads to normal standardized
residuals \( \hat{\eta}_{t,j} \). Table \ref{tb:weather7} and  \ref{tb:weather8}
summarize statistics from the normality test of standardized residuals from three
methods  in Berlin and Kaoshiung. It can be seen that Berlin has more normal residuals
than Kaoshiung. Method three is always better at getting more normal residuals, and
method two is compatible with method three. It may be due to the fact that quantiles at higher or
lower levels are better at explaining the extremes of the volatility function. Method
four  performs not so well,  as it is with a fixed bandwidth. Therefore we conclude that
our adaptive technique is useful in modeling temperature residuals.

    \begin{table}[h]
    \caption{P-values of Normality Tests: Berlin }
       \vspace{0.3cm}
       \centering
       \begin{tabular}{l l l l l}
       \hline\hline
       &AD& JB& KS \\
       \( 1 \)      &0.000         & 0.010          & 0.060       \\ \hline
       \( 2 \)      &0.062         & 0.000          & 0.020        \\ \hline
       \( 3 \)      &0.054         & 0.487          & 0.171       \\   \hline
       \( 4 \)      &0.009         & 0.000          & 0.002       \\
        \hline \hline
       \end{tabular}
     \label{tb:weather7}
    \end{table}

    \begin{table}[h]
      \caption{P-values of Normality Tests:Kaoshiung }
      \vspace{0.3cm}
      \centering
     \begin{tabular}{l l l l l}
       \hline\hline
       &AD& JB& KS \\
       \( 1 \)   & 0.000 &0.000 &0.000      \\ \hline
       \( 2 \)      & 1.03e-05& 0.077& 0.043     \\ \hline
       \( 3 \)      &2.37e-06     &0.742       &0.674      \\   \hline
       \( 4 \)      &0.000         & 0.021          & 0.019       \\
      \hline \hline
     \end{tabular}
     \label{tb:weather8}
    \end{table}

\section{Finite Sample Theory}
\label{StheoryLQR}
This section discusses some theoretical properties of the proposed estimator
\( \hat{\thetav}(x) = \tilde{\thetav}_{\hat{k}}(x) \)  under a general data distribution.
Here \( \hat{k} = \hat{k}(x) \) is the index selected by the pointwise procedure
from Section~\ref{Spointband}.
The main ``oracle'' result shows that \( \hat{\thetav}(x) \) is \emph{adaptive} in
the sense that it provides nearly the same quality of estimation as the \emph{oracle}
estimator \( \tilde{\thetav}_{\ks}(x) \) which is the best in the family
\( \bigl\{ \tilde{\thetav}_{k}(x) \bigr\}_{k=1}^{\K} \).
A precise definition of \( \ks \) will be given below in term of the
\emph{modeling bias}.

\subsection{Modeling Bias}
\label{SModBi}
The proposed approach for the bandwidth selection suggests taking a larger and
larger bandwidth until the linear parametric assumption is not significantly violated
on the considered interval.
The likelihood ratio test statistics
\( L\bigl( W^{(\ell)},\tilde{\thetav}_{\ell}(x),\tilde{\thetav}_{k}(x) \bigr) \) from
\eqref{eq:confs} are used for this check.
The formal definition of the best or oracle choice requires the introduction of a measure
for the deviation of the function \( \fs(\cdot) \) from its best linear
approximation of the form \( \Psi^{\T} \thetav \) on the interval of radius
\( h_{k} \) considered at step \( k \) of the procedure.
We follow \cite{Spokoiny:2009} who introduced the \emph{modeling bias} to measure the
deviation from the linear parametric structure.
Define \( P_{i} \) as the distribution of the observation \( Y_{i} \).
Let also \( P_{i,s} \) be a shift of \( P_{i} \) by \( s \), that is, the
distribution of \( Y_{i} - s \).
Also denote \( \fs_{i} = \fs(X_{i}) \) and
\( \fs_{i}(\thetav) = \Psi_{i}^{\T} \thetav \).
In particular, \( P_{i,\fs_{i}} \) is the distribution of
\( \varepsilon_{i} \eqdef Y_{i} - \fs(X_{i}) \), so that its \( \tau \)-quantile is
zero.
The underlying measure \( \P \) is the product of the measures \( P_{i,\fs_{i}} \).
Under the linear PA \( \fs(X_{i}) = \fs_{\thetav}(X_{i}) \), the corresponding
measure \( \P_{\thetav} \) is the product of the \( P_{i,f_{i}(\thetav)} \):
\begin{EQA}[c]
    \P
    =
    \prod_{i=1}^{n} P_{i,f_{i}} \, ,
    \qquad
    \P_{\thetav}
    =
    \prod_{i=1}^{n} P_{i,f_{i}(\thetav)} \, .
\label{Pti1nPi}
\end{EQA}
The modeling bias at step \( k \) measures the
deviation of the true quantile function \( \fs \) from the linear parametric one and
it is defined as
\begin{EQA}
    \Delta_{k}
    & \eqdef &
    \inf_{\thetav} \Delta_{k}(\thetav),
    \\
    \Delta_{k}(\thetav)
    & \eqdef &
    \sum_{i=1}^{n} \kullb\bigl( P_{i,\fs_{i}}, P_{i,\fs_{i}(\thetav)} \bigr)
        \Ind\{ w^{(k)}_{i} > 0 \}.
\label{Deltak}
\end{EQA}
Here \( \kullb(P,Q) \) is the Kullback-Leibler divergence between two measures
\( P \) and \( Q \).
The quantity \( \Delta_{k}(\thetav) \) can be viewed as weighted
Kullback-Leibler divergence between \( \P \) and \( \P_{\thetav} \)
localized to the observations in the interval of radius \( h_{k} \) around \( x \).
The value \( \Delta_{k} \) describes the quality of the best linear approximation on
this interval.
The \emph{small modeling bias} (SMB) condition manifests that the value
\( \Delta_{k} \) does not exceed a prescribed quantity \( \Delta > 0 \),
and the oracle choice of the bandwidth \( h_{k} \) is
defined as the largest bandwidth among \( h_{k} \)
for which the SMB condition is satisfied:
\begin{EQA}[c]
\label{ksSMBDelta}
     \ks
     \eqdef
     \argmax_{k \le \K} \{ \Delta_{k} \le \Delta \} .
\end{EQA}
\cite{Spokoiny:2009} argued that such a choice leads to the bias-variance trade-off
in the usual nonparametric sense.
Thus, the oracle bandwidth yields the rate optimal estimation quality in the asymptotic
set-up.

All the introduced quantities depend on the central point \( x \).
Therefore, the parameter \( \thetavs \) of the best parametric fit and the oracle
bandwidth \( \ks \) also depend on \( x \):
our approach allows us to specify the best bandwidth for each point separately.
Under the measure \( \P_{\thetavs} \),
the estimate \( \tilde{\thetav}_{k}(x) \) is close to \( \thetavs \) in the sense that the
confidence set \( \CS_{k}(\zz_{k}) \) covers \( \thetavs \) with a high probability
and the risk
\( \E_{\thetavs} L^{r}\bigl( W^{(k)},\tilde{\thetav}_{k}(x),\thetavs \bigr) \) remains
bounded by a fixed constant \( \riskt_{r} \) for all \( k \le \K \).
The definition of the modeling bias based on the Kullback-Leibler divergence
allows for the translation of these properties to the general case at the cost of the additional
factor \( \ex^{\Delta} \).
More precisely, the following bound holds.

\begin{theorem}
\label{Toracle1}
Let \( \thetavs \) and \( \ks \le \K \) be such that
\( \Delta_{\ks}(\thetavs) \le \Delta \).
Then for each \( k \le \ks \)
\begin{EQA}
\label{eq:ora1}
    \E\log\biggl\{
        1+\frac{L^{r}(W^{(k)},\tilde{\thetav}_{k}(x),\thetavs)}{\riskt_{r}}
    \biggr\}
    & \le &
    \Delta+1 .
\end{EQA}
\end{theorem}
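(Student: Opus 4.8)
The core idea is a change-of-measure argument from the parametric model $\P_{\thetavs}$ to the true model $\P$, paying the price $\ex^{\Delta_{\ks}(\thetavs)} \le \ex^{\Delta}$ for the likelihood-ratio between them. First I would recall that the SMB assumption in the theorem is stated at the oracle index $\ks$, and that for $k \le \ks$ the localizing window $\{i : w^{(k)}_{i} > 0\}$ is contained in the window $\{i : w^{(\ks)}_{i} > 0\}$ (the bandwidths are increasing), so that $\Delta_{k}(\thetavs) \le \Delta_{\ks}(\thetavs) \le \Delta$ as well — hence it suffices to control, for each fixed $k \le \ks$, the relevant functional restricted to the $k$-th window. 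The key probabilistic fact I would invoke is the following elementary inequality for a likelihood ratio: if $Z = d\P/d\P_{\thetavs}$ restricted to the $\sigma$-field generated by the observations $\{Y_{i} : w^{(k)}_{i} > 0\}$, then for any nonnegative random variable $\xi$ measurable with respect to that $\sigma$-field, $\E\,\xi = \E_{\thetavs}(Z\,\xi)$, and by the Donsker–Varadhan / entropy variational bound, for any such $\xi$,
\begin{EQA}[c]
    \E \log(1 + \xi)
    =
    \E_{\thetavs}\bigl\{ Z \log(1 + \xi) \bigr\}
    \le
    \kullb(\P, \P_{\thetavs}) + \log \E_{\thetavs}(1 + \xi).
\end{EQA}
Here $\kullb(\P, \P_{\thetavs})$, computed on the restricted $\sigma$-field, is exactly $\sum_{i} \kullb(P_{i,\fs_{i}}, P_{i,\fs_{i}(\thetavs)}) \Ind\{w^{(k)}_{i} > 0\} = \Delta_{k}(\thetavs) \le \Delta$ by definition of the modeling bias.

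Applying this with $\xi = L^{r}(W^{(k)},\tilde{\thetav}_{k}(x),\thetavs)/\riskt_{r}$ — which is indeed a nonnegative function of the observations inside the $k$-th window, since $\tilde{\thetav}_{k}(x)$ depends only on those $Y_{i}$ with $w^{(k)}_{i} > 0$ — I would obtain
\begin{EQA}[c]
    \E \log\biggl\{ 1 + \frac{L^{r}(W^{(k)},\tilde{\thetav}_{k}(x),\thetavs)}{\riskt_{r}} \biggr\}
    \le
    \Delta + \log\biggl( 1 + \frac{1}{\riskt_{r}} \E_{\thetavs} L^{r}(W^{(k)},\tilde{\thetav}_{k}(x),\thetavs) \biggr).
\end{EQA}
Now I would plug in the parametric risk bound \eqref{riskkx}, namely $\E_{\thetavs} L^{r}(W^{(k)},\tilde{\thetav}_{k}(x),\thetavs) \le \riskt_{r}$ for all $k \le \K$ (this is precisely the defining property of the constant $\riskt_{r}$), so the second term is bounded by $\log 2 \le 1$. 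Combining the two displays gives $\Delta + 1$, which is the claimed bound \eqref{eq:ora1}. Note also that by Lemma~\ref{Lpivotq} the quantity $\E_{\thetavs} L^{r}(W^{(k)},\tilde{\thetav}_{k}(x),\thetavs)$ does not depend on the choice of $\thetavs$, so \eqref{riskkx} can be verified at $\thetavs = 0$ without loss of generality.

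The main obstacle — or rather the point requiring the most care — is the measurability/restriction bookkeeping: one must argue that the change of measure can legitimately be carried out on the sub-$\sigma$-field generated only by the observations with positive weight $w^{(k)}_{i}$, so that the relevant Kullback–Leibler term is $\Delta_{k}(\thetavs)$ and not the full-sample divergence (which could be infinite or at least uncontrolled). This works because the observations $Y_{i}$ are independent, the two product measures $\P$ and $\P_{\thetavs}$ agree on every coordinate with $w^{(k)}_{i} = 0$ once we also note $\fs_{i} = \fs_{i}(\thetavs)$ is not required there — rather, the estimator $\tilde{\thetav}_{k}(x)$ simply ignores those coordinates, so we may integrate them out first and the residual divergence is exactly the localized sum in \eqref{Deltak}. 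The only other mild point is ensuring the entropy inequality is applied in the correct direction (it is the bound $\E_{P} g \le \kullb(P,Q) + \log \E_{Q} e^{g}$ with $g = \log(1+\xi)$, i.e. $e^{g} = 1 + \xi$), which is exactly what produces the additive $\Delta$ rather than a multiplicative $\ex^{\Delta}$ after taking the logarithm — this is the whole reason the statement is phrased with $\E\log\{1 + \cdot\}$ on the left-hand side.
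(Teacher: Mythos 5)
Your proposal is correct and follows essentially the same route as the paper: a change of measure from \( \P \) to \( \P_{\thetavs} \) on the observations in the \( k \)-th window, paying the localized Kullback--Leibler price \( \Delta_{k}(\thetavs) \le \Delta_{\ks}(\thetavs) \le \Delta \), combined with the parametric risk bound \eqref{riskkx} — this is exactly the content of the paper's Lemma~\ref{Linftheor}, whose proof rests on the same Fenchel--Young inequality \( xy \le x\log x - x + \ex^{y} \) that underlies your Donsker--Varadhan step. The only (harmless) difference is that your version keeps \( \log\bigl(1 + \E_{\thetavs}\xi\bigr) \le \log 2 \) where the paper bounds it by \( \E_{\thetavs}\xi \le 1 \); both yield \( \Delta + 1 \).
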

So, if \( \Delta \) is small all the confidence or risk bounds continue to apply
even in the local nonparametric situation.

\subsection{``Oracle'' Property}
This section presents our main result called the oracle risk bound.
The main message of this result is that the adaptive estimator
\( \hat{\thetav}(x) \) performs nearly as well as the best (oracle) estimator does.
Our theoretic study is performed under the assumption that the critical values
\( \zz_{k} \) are computed under the measure \( \P_{\thetavs} \) described in
Section~\ref{SModBi}.
Due to Lemma~\ref{Lpivotq}, a particular choice of the parameter \( \thetavs \) does not
matter.
In addition, \( \P_{\thetavs} \) involves
the distribution of the residuals \( \varepsilon_{i} - \fs(X_{i}) \) which is not
available.
However, one can use a proxy for this distribution, because the critical values are
rather stable w.r.t. to the error distribution; see Corollary~\ref{CTwilksqr} and
discussion afterwards for more arguments.

Let the bandwidth index \( \ks \) be defined by the SMB condition
\eqref{ksSMBDelta} leading to the oracle estimator \( \tilde{\thetav}_{\ks}(x) \).
The next result claims that for the final estimator \( \hat{\thetav}(x) \), the
difference
\begin{EQA}[c]
    L\bigl( W^{(\ks)},\tilde{\thetav}_{\ks}(x),\hat{\thetav}(x) \bigr)
    =
    L\bigl( W^{(\ks)},\tilde{\thetav}_{\ks}(x) \bigr)
    -L \bigl( W^{(\ks)},\hat{\thetav}(x) \bigr)
\label{LLLWksht}
\end{EQA}
is not larger in order than \( \zz_{\ks} \, \ex^{\Delta} \).
Later we show that the critical value \( \zz_{\ks} \) is at most logarithmic in the
sample size \( n \).
Therefore, the oracle result means that the adaptive estimator \( \hat{\thetav}(x)
\) belongs with a dominating probability to a confidence set of the oracle.

\begin{theorem}
\label{Toracle}
Suppose \Ass.1--\Ass.5.
Let \( \ks \le \K \) be such that
\( \Delta_{\ks}(\thetav) \le \Delta \).
Then
\begin{EQA}
\label{eq:oral2}
    \E\log\biggl\{
        1 +
        \frac{L^{r}\bigl( W^{(\ks)},\tilde{\thetav}_{\ks}(x),\hat{\thetav}(x) \bigr)}
             {\riskt_{r}}
    \biggr\}
    & \le &
    \alpha + \Delta + \log\bigl( 1+ \frac{\zz_{\ks}}{\riskt_{r}} \bigr) .
\end{EQA}
\end{theorem}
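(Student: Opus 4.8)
\textbf{Proof plan.}
The plan is to split the loss $L\bigl(W^{(\ks)},\tilde{\thetav}_{\ks}(x),\hat{\thetav}(x)\bigr)$ according to whether the adaptively chosen index $\hat{k}$ reaches the oracle index $\ks$, and to control the two regimes by the acceptance rule \eqref{eq:confs} and by the propagation condition \eqref{eq:propa}, respectively. First I record the structural facts about the procedure. By construction $\hat{\thetav}(x)=\tilde{\thetav}_{\hat{k}}(x)$, and since acceptance is sequential, $\{\hat{k}\ge\ks\}$ forces $\tilde{\thetav}_{\ks}(x)$ to have been accepted, i.e.\ $\hat{\thetav}_{\ks}(x)=\tilde{\thetav}_{\ks}(x)$; conversely $\{\hat{k}<\ks\}=\{\hat{\thetav}_{\ks}(x)\neq\tilde{\thetav}_{\ks}(x)\}$, and on this event the estimator has frozen, so $\hat{\thetav}(x)=\hat{\thetav}_{\ks}(x)$. (If $\ks=1$ the bound is trivial, since then $\hat{\thetav}_{1}(x)=\tilde{\thetav}_{1}(x)$; so assume $\ks\ge2$.)

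On $\{\hat{k}\ge\ks\}$ the loss is either zero (if $\hat{k}=\ks$) or, if $\hat{k}>\ks$, step $\hat{k}$ has passed the test at index $\ell=\ks$, so \eqref{eq:confs} gives $L\bigl(W^{(\ks)},\tilde{\thetav}_{\ks}(x)\bigr)-L\bigl(W^{(\ks)},\tilde{\thetav}_{\hat{k}}(x)\bigr)\le\zz_{\ks}$, i.e.\ $L\bigl(W^{(\ks)},\tilde{\thetav}_{\ks}(x),\hat{\thetav}(x)\bigr)\le\zz_{\ks}$ on the whole event. Hence, since $r\le1$, this event contributes at most $\log\bigl(1+\zz_{\ks}/\riskt_{r}\bigr)$ to the left-hand side of \eqref{eq:oral2}. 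On $\{\hat{k}<\ks\}$, the structural facts give
\[
  L\bigl(W^{(\ks)},\tilde{\thetav}_{\ks}(x),\hat{\thetav}(x)\bigr)\,\Ind\{\hat{k}<\ks\}
  = L\bigl(W^{(\ks)},\tilde{\thetav}_{\ks}(x),\hat{\thetav}_{\ks}(x)\bigr)\,\Ind\{\hat{k}<\ks\}
  \le L\bigl(W^{(\ks)},\tilde{\thetav}_{\ks}(x),\hat{\thetav}_{\ks}(x)\bigr),
\]
which is precisely the false-alarm loss \eqref{LWktt} at step $k=\ks$. So this event contributes at most $\E\log\bigl\{1+L^{r}\bigl(W^{(\ks)},\tilde{\thetav}_{\ks}(x),\hat{\thetav}_{\ks}(x)\bigr)/\riskt_{r}\bigr\}$, an expectation under the true measure $\P$.

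The remaining step is to transfer this last expectation to the artificial measure $\P_{\thetavs}$. The random variable $L^{r}\bigl(W^{(\ks)},\tilde{\thetav}_{\ks}(x),\hat{\thetav}_{\ks}(x)\bigr)$ depends only on the observations $Y_{i}$ with $w_{i}^{(\ks)}>0$, because every $\tilde{\thetav}_{k}(x)$ with $k\le\ks$ and every consistency check among them uses weights supported in the window of radius $h_{\ks}$ around $x$. Applying the variational (entropy) representation of the Kullback--Leibler divergence, $\E_{\P}g\le\kullb(\P,\P_{\thetavs})+\log\E_{\thetavs}\ex^{g}$, with $g=\log\bigl\{1+L^{r}/\riskt_{r}\bigr\}$ on that sub-$\sigma$-field, and noting that the corresponding localized divergence equals $\Delta_{\ks}(\thetavs)\le\Delta$, I obtain
\[
  \E\log\Bigl\{1+\frac{L^{r}\bigl(W^{(\ks)},\tilde{\thetav}_{\ks}(x),\hat{\thetav}_{\ks}(x)\bigr)}{\riskt_{r}}\Bigr\}
  \le \Delta+\log\Bigl(1+\frac{\E_{\thetavs}L^{r}\bigl(W^{(\ks)},\tilde{\thetav}_{\ks}(x),\hat{\thetav}_{\ks}(x)\bigr)}{\riskt_{r}}\Bigr),
\]
which is the same mechanism that underlies Theorem~\ref{Toracle1}. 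By the propagation condition \eqref{eq:propa} with $k=\ks$, the expectation under $\P_{\thetavs}$ is at most $\alpha\riskt_{r}$, so this term is at most $\Delta+\log(1+\alpha)\le\Delta+\alpha$. Adding the contributions of the two events $\{\hat{k}\ge\ks\}$ and $\{\hat{k}<\ks\}$ yields $\alpha+\Delta+\log(1+\zz_{\ks}/\riskt_{r})$, i.e.\ \eqref{eq:oral2}.

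\textbf{Main obstacle.} The delicate point is the change of measure from $\P$ to $\P_{\thetavs}$: one must verify that the false-alarm loss is measurable with respect to the local $\sigma$-field carrying only the data in the window of radius $h_{\ks}$, so that the price of the transfer is exactly $\Delta_{\ks}(\thetavs)\le\Delta$ rather than the full, and possibly large, global Kullback--Leibler divergence. The rest is the bookkeeping tying $\hat{k}$, $\hat{\thetav}_{\ks}(x)$ and $\hat{\thetav}(x)$ together on the two events --- elementary, but where an off-by-one in the sequential acceptance rule would be easy to make.
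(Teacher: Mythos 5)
Your proposal is correct and follows essentially the same route as the paper: decompose on whether \( \hat{k} \) reaches \( \ks \), bound the event \( \{\hat{k}>\ks\} \) by the acceptance test at \( \ell=\ks \) (the stability property), and handle \( \{\hat{k}<\ks\} \) by reducing to the false-alarm loss \( L^{r}\bigl(W^{(\ks)},\tilde{\thetav}_{\ks}(x),\hat{\thetav}_{\ks}(x)\bigr) \), transferring the expectation to \( \P_{\thetavs} \) at cost \( \Delta \) via the entropy/Kullback--Leibler inequality (the paper's Lemma~\ref{Linftheor}), and invoking the propagation condition \eqref{eq:propa}. The only cosmetic discrepancy — \( \zz_{\ks}^{r} \) versus \( \zz_{\ks} \) in the last term — is present in the paper's own statement as well, so it is not a gap attributable to you.
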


An interesting special case of this result is the pure parametric situation with a
linear (in \( \Psi \)) quantile function \( \fs \).
The oracle estimator \( \tilde{\thetav}_{\ks} \) corresponds to \( \ks = \K \), that
is, to the largest bandwidth \( h_{\K} \).
If it is large enough, then \( \tilde{\thetav}_{\K} \) nearly coincides with the
global quantile estimator.
Moreover, if the errors \( Y_{i} - \fs(X_{i}) \) are i.i.d. Laplacian, then
\( \tilde{\thetav} \) is nearly efficient.
The critical values \( \zz_{k} \) decreases with \( k \) and the largest one
\( \zz_{\K} \) is usually close to zero.
So, our oracle result yields that the proposed adaptive procedure is nearly
efficient in the parametric situation.

\section{Appendix}
The appendix collects the conditions, technical results, and the proofs.
First we fix our assumptions.
%
We assume independent observations \( Y_{1},\ldots,Y_{n} \).
The results are stated for a deterministic design \( X_{1},\ldots,X_{n} \)
under mild regularity conditions.
The case of a random design can be considered by the usual conditioning argument.
Given \( \tau \), the quantile function \( \fs(\cdot) \) is defined by the relation
\( \P\bigl\{ Y_{i} > \fs(X_{i}) \bigr\} = \tau \).
To avoid ambiguous notation, we suppose that this equation has an unique solution for
each \( i \).
The general case can be easily reduced to this one by standard arguments;
see e.g. \cite{Ro:2005}.
We also denote by \( P_{i} \) the distribution of the residual
\( \varepsilon_{i} = Y_{i} - \fs(X_{i}) \) and by \( \logdens_{i}(\cdot) \) its density.
Below a point \( x \) is fixed and the target of estimation is the quantile
\( \fs(x) \).
The local parametric approach requires fixing a localizing weighting scheme
\( W = (w_{1},\ldots,w_{n}) \) and linear parametric family
\( \fs_{\thetav}(\cdot) \) with \( \fs_{\thetav}(X_{i}) = \Psi_{i}^{\T} \thetav \),
where \( \Psi_{i,m} = (X_{i} - x)^{m}/m!\) for \( m=0,1,\ldots,\dimp \).

Our theoretical study can be separated into two parts.
An essential and the most difficult part is done under the linear
parametric assumption \( \fs(\cdot) \equiv \fs_{\thetavs}(\cdot) \),
then we extend the results to the case when this assumption is approximately
fulfilled in a local vicinity of the central point \( x \).

Below a family of localizing weighting schemes
\( W^{(k)} = \bigl\{ w_{i}^{(k)} \bigr\}_{i=1}^{n} \) for \( k = 1,\ldots,\K \)
is supposed to be fixed.
Our standard proposal is \( w_{i}^{(k)} = K_{\loc}\bigl\{ (X_{i} - x)/h_{k} \bigr\} \)
for a given kernel \( K_{\loc}(\cdot) \) and a sequence of bandwidths
\( h_{1} < h_{2} < \ldots < h_{\K} \).
Define
\begin{EQA}
    \DP_{k}^{2}
    & \eqdef &
    \sum_{i=1}^{n} \Psi_{i} \Psi_{i}^{\T} \logdens_{i}(0) w_{i}^{(k)}
\label{DPWqr}
    \\
    \VP_{k}^{2}
    & \eqdef &
    \Var\bigl\{ \nabla L(W^{(k)},\thetavs) \bigl\}
    =
    \tau (1-\tau) \sum_{i=1}^{n} \Psi_{i} \Psi_{i}^{\T} \bigl| w_{i}^{(k)} \bigr|^{2} ,
\label{VPWqr}
    \\
    N^{-1/2}_{k}
    & \eqdef &
    \max_{i \le n} \sup_{\gammav \in \R^{\dimp+1}}
        \frac{|\gammav^{\T} \Psi_{i}| \Ind\bigl( w_{i}^{(k)} > 0 \bigr)}{ \| \VP_{k} \gammav \| }
        \sqrt{\tau (1-\tau)} .
\label{NWqr}
\end{EQA}
Here \( \DP_{k}^{2} \) and \( \VP_{k}^{2} \) are symmetric \( (\dimp+1) \times (\dimp+1) \)
matrices: \( \DP_{k}^{2} \) can be defined similarly to the Fisher information
matrix \( \DP_{k}^{2} = - \nabla^{2} \E L(W^{(k)},\thetavs) \), while
\( \VP_{k}^{2} \) is the covariance matrix of the score
\( \nabla L(W^{(k)},\thetavs) \) under the parametric assumption
\( \fs \equiv \fs_{\thetavs} \).
In the global parametric situation, these two matrices coincide.
The value \( N_{k} \) can be treated as the local sample size corresponding to the
localizing scheme \( W^{(k)} \).

The following conditions will be assumed for our results.

\begin{itemize}
    \item[\( \Ass.1 \)] \( \{ Y_{i} \}^{n}_{i=1} \) are independent.

    \item[\( \Ass.2 \)]
    For some constants \( 0 < \nud < \nuu < 1 \),
\begin{EQA}[c]
    0 < \nud
    \le
    \| \DP_{k}^{-1} \DP_{k-1}^{2} \DP_{k}^{-1} \|_{\infty}
    \le
    \nuu < 1.
\label{u0uWk}
\end{EQA}
    \item[\( \Ass.3 \)]
    For a constant \( \fis > 0 \) and all \( k=1,\ldots,\K \), it holds
\begin{EQA}[c]
    \VP_{k}^{2} \le \fis^{2} \DP_{k}^{2} .
\label{VP2Wk}
\end{EQA}

    \item[\( \Ass.4 \)] For some fixed \( \delta < 1/2 \) and \( \rho > 0 \),
\begin{EQA}[c]
    \bigl| \logdens_{i}(u) / \logdens_{i}(0) - 1 \bigr| \le \delta,
    \qquad
    |u| \le \rho.
\label{piupi0}
\end{EQA}

    \item[\( \Ass.5 \)]
%
    The kernel function \( K_{\loc}(\cdot) \) is symmetric, \( K(0) = 1 \),
    \( K(u) \) decreases in \( u \ge 0 \) and \( K(u) = 0 \) for \( |u| \ge 1 \).
\end{itemize}

The condition
\( \Ass.2 \) effectively requires that the bandwidth sequence \( h_{k} \) grows
geometrically with \( k \).
Condition \( \Ass.3 \) is the local identifiability condition and it ensures that the
local variability of the process \( L(W^{(k)},\thetav) \) measured by the matrix \(
\VP_{k}^{2} \) is not significantly larger than the local information measured by the
matrix \( \DP_{k}^{2} \).
\( \Ass.4 \) only requires that the density functions \( \logdens_{i} (\cdot)\) are uniformly
continuous in a vicinity of zero.
In particular, the residuals can be unequally distributed.
All the results below tacitly assume that the conditions \( \Ass.1 \)--\( \Ass.5 \) hold.

Below we use generic notation \( C = C(\Ass) \) to indicate that a constant \( C \)
only depends on the constants from conditions \( \Ass.1 \)--\( \Ass.5 \) like \(
\fis \), \( \rho \), \( \delta \), \( \nud \), \( \nuu \), etc. We will also use
conditions  \( {(E\rr)} \), \( {(\cc{L}\rr)} \) etc. defined later in section
\ref{subcon}.


\subsection{Uniform concentration of the MLEs \( \tilde{\thetav}_{k}(x) \) under
\( \P_{\thetavs} \)}
The first result explains the localization property of the estimators
\( \tilde{\thetav}_{k}(x) \) from (\refeq{eq:nonk}) under the linear parametric structure
of the quantile function, that is, \( \fs(X_{i}) = \Psi_{i}^{\T} \thetavs \).
With some value \( \rups \) fixed, define for each \( k \le \K \) a local elliptic set
\begin{EQA}[c]
    \Theta_{k}(\rups)
    \eqdef
    \bigl\{ \thetav:  \| \VP_{k} (\thetav - \thetavs) \| \le \rups \bigr\}
\label{Thetasrups}
\end{EQA}
with \( \VP^{2}_{k} \) from \eqref{VPWqr}.
The question under study is a proper choice of the radius \( \rups \) which ensures
a prescribed small deviation probability for the event
\( \tilde{\thetav}_{k}(x) \not\in \Theta_{k}(\rups) \) uniformly in \( k \le \K \).

\begin{theorem}
\label{TLDqr}
Suppose \( {(E\rr)} \) and \( {(\cc{L}\rr)} \), and there exist constants \( C_{1} = C_{1}(\Ass) \) and \( C_{2} = C_{2}(\Ass) \)
such that the conditions
\begin{EQA}[c]
    \rups^{2} \ge C_{1} (\xx + \dimp+1),
    \qquad
    \rho^{2} N_{k} \ge C_{2} (\xx + \dimp+1)
\label{rupsconrqr}
\end{EQA}
ensure for \( k \le \K \)
\begin{EQA}
    \P_{\thetavs}\bigl\{ \tilde{\thetav}_{k}(x) \not\in \Theta_{k}(\rups) \bigr\}
    & \le &
    2 \ex^{-\xx} ,
\label{Pttexxx}
    \\
    \E_{\thetavs} \bigl[ L^{r}\bigl( W^{(k)},\tilde{\thetav}_{k}(x),\thetavs \bigr)
        \Ind\bigl\{ \tilde{\thetav}_{k}(x) \not\in \Theta_{k}(\rups) \bigr\} \bigr]
    & \le &
    C(\Ass) \ex^{-\xx} .
\end{EQA}
\end{theorem}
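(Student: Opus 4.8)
The plan is to use the concavity of \( \thetav \mapsto L(W^{(k)},\thetav) \), which holds because the quantile contrast \( \rho_{\tau} \) is convex. Since \( \Theta_{k}(\rups) \) is an ellipsoid centred at \( \thetavs \), the event \( \bigl\{ \tilde{\thetav}_{k}(x) \notin \Theta_{k}(\rups) \bigr\} \) forces the concave function \( L(W^{(k)},\cdot) \) to attain a value at least \( L(W^{(k)},\thetavs) \) somewhere on the boundary \( \partial\Theta_{k}(\rups) = \bigl\{ \thetav : \| \VP_{k}(\thetav - \thetavs) \| = \rups \bigr\} \). I would therefore work on the boundary and split the log-likelihood excess into its mean and a centred stochastic part,
\begin{EQA}[c]
    L(W^{(k)},\thetav) - L(W^{(k)},\thetavs) = - M_{k}(\thetav) + \zeta_{k}(\thetav),
\end{EQA}
where \( M_{k}(\thetav) \eqdef - \E_{\thetavs}\bigl[ L(W^{(k)},\thetav) - L(W^{(k)},\thetavs) \bigr] \ge 0 \), \( M_{k}(\thetavs) = 0 \), and \( \zeta_{k}(\thetavs) = 0 \) with leading linear term \( \langle \nabla L(W^{(k)},\thetavs), \thetav - \thetavs \rangle \). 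The bad event is then contained in \( \bigl\{ \sup_{\partial\Theta_{k}(\rups)} \zeta_{k}(\thetav) \ge \inf_{\partial\Theta_{k}(\rups)} M_{k}(\thetav) \bigr\} \).

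Next I would produce a quadratic lower bound for \( M_{k} \). By the representation \eqref{LWkts}, \( M_{k}(\thetav) \) is a weighted sum of one-dimensional expected contrast increments \( \E_{\thetavs}\bigl[ \rho_{\tau}(\varepsilon_{i} - \Psi_{i}^{\T}(\thetav-\thetavs)) - \rho_{\tau}(\varepsilon_{i}) \bigr] \), each of which vanishes to first order at \( \thetav = \thetavs \) and has second derivative \( \logdens_{i}(\cdot) \). The crucial point is that for \( \thetav \in \Theta_{k}(\rups) \) and \( w_{i}^{(k)} > 0 \) the definition \eqref{NWqr} of the local sample size gives \( |\Psi_{i}^{\T}(\thetav-\thetavs)| \le \{\tau(1-\tau)\}^{-1/2} N_{k}^{-1/2}\rups \), which does not exceed \( \rho \) exactly when \( \rho^{2} N_{k} \ge C_{2}(\xx + \dimp + 1) \) — this is where the second requirement in \eqref{rupsconrqr} enters. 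On that range \( \Ass.4 \) applies, a two-term Taylor expansion yields \( M_{k}(\thetav) \ge \tfrac{1-\delta}{2}\| \DP_{k}(\thetav-\thetavs) \|^{2} \), and \( \Ass.3 \) converts this on the boundary into \( M_{k}(\thetav) \ge \tfrac{1-\delta}{2\fis^{2}}\rups^{2} \).

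For the stochastic term I would invoke the exponential bound \( {(E\rr)} \) on the normalised score \( \VP_{k}^{-1}\nabla L(W^{(k)},\thetavs) \) to control its linear part, \( \sup_{\partial\Theta_{k}(\rups)} \langle \nabla L(W^{(k)},\thetavs), \thetav-\thetavs \rangle = \rups \| \VP_{k}^{-1}\nabla L(W^{(k)},\thetavs) \| \), together with \( {(\cc{L}\rr)} \) to bound the nonlinear remainder of \( \zeta_{k} \) uniformly over \( \Theta_{k}(\rups) \); combining the two through a covering of the \( (\dimp+1) \)-dimensional ellipsoid yields a tail of the form \( \P_{\thetavs}\bigl\{ \sup_{\partial\Theta_{k}(\rups)} \zeta_{k}(\thetav) \ge z \bigr\} \le 2\exp\bigl\{ -c_{1} z^{2}/\rups^{2} + c_{2}(\dimp+1) \bigr\} \) for \( z \) up to order \( \rups N_{k}^{1/2} \). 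Taking \( z = \tfrac{1-\delta}{2\fis^{2}}\rups^{2} \) and choosing \( C_{1} \) so that \( \rups^{2} \ge C_{1}(\xx + \dimp + 1) \) drives the exponent below \( -\xx \); this establishes \eqref{Pttexxx}. For the \( L^{r} \)-moment bound I would run the same estimate at every radius \( t \ge \rups \), obtaining \( \P_{\thetavs}\bigl\{ \tilde{\thetav}_{k}(x) \notin \Theta_{k}(t) \bigr\} \le 2\exp\bigl\{ -t^{2}/C_{1} + \dimp + 1 \bigr\} \), note that on \( \Theta_{k}(t) \) a Wilks-type majorization bound from the Appendix gives \( L\bigl( W^{(k)},\tilde{\thetav}_{k}(x),\thetavs \bigr) \le C(\Ass)\, t^{2} \), and integrate this polynomial-in-\( t \) quantity against the Gaussian-type tail; with \( C_{1} \) chosen large enough the leading factor \( \exp\{-\rups^{2}/C_{1} + \dimp + 1\} \) is bounded by \( \ex^{-\xx} \), so the integral converges to \( C(\Ass)\ex^{-\xx} \).

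The main obstacle is the non-smoothness of \( \rho_{\tau} \): there is no pointwise second-order Taylor expansion of \( L(W^{(k)},\cdot) \), so both the quadratic minorization of \( M_{k} \) away from \( \thetavs \) and the exponential control of the stochastic remainder \( \zeta_{k} \) must be routed through the conditions \( {(E\rr)} \), \( {(\cc{L}\rr)} \) and the majorization machinery of the Appendix rather than obtained by elementary calculus. A secondary difficulty is the simultaneous bookkeeping linking \( \rups \), the local sample size \( N_{k} \), and the continuity radius \( \rho \) of \( \Ass.4 \), which is precisely what forces the two conditions in \eqref{rupsconrqr} to appear together.
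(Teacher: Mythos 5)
Your outline is correct and follows essentially the same route as the paper, which in fact gives no self-contained proof of Theorem~\ref{TLDqr}: it defers entirely to the large-deviation machinery of \cite{spo:11}, whose applicability is secured by the verification of \( {(E\rr)} \) and \( {(\LL\rr)} \) in the appendix. Your decomposition into the deterministic drift \( M_{k} \) (minorized quadratically via \( \Ass.3 \), \( \Ass.4 \), which is exactly how the paper establishes \( {(\LL_{0})} \) and \( {(\LL\rr)} \)) and the centred process \( \zeta_{k} \) (controlled through the exponential moment bound \( {(E\rr)} \) plus an entropy argument), together with the tail integration for the \( L^{r} \)-moment, is precisely that machinery; your bookkeeping showing where the two conditions in \eqref{rupsconrqr} enter is also accurate. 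The one genuine deviation is your use of the convexity of \( \rho_{\tau} \) to reduce the bad event to the boundary sphere \( \| \VP_{k}(\thetav-\thetavs)\| = \rups \); the general theory does not assume convexity and instead peels over all radii \( \rr \ge \rups \), which is why \( {(E\rr)} \) and \( {(\LL\rr)} \) are stated for every such \( \rr \). Your shortcut is legitimate here and slightly more elementary, at the cost of being specific to convex contrasts; one small caveat is that your claimed bound \( L\bigl( W^{(k)},\tilde{\thetav}_{k}(x),\thetavs \bigr) \le C(\Ass)\, t^{2} \) on \( \Theta_{k}(t) \) for large \( t \) lies outside the local zone where the Wilks majorization is proved, so the moment bound is more safely obtained by integrating the tail of the excess itself (as in \cite{spo:11}) rather than the tail of the radius.
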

In particular, a choice \( \xx = \log (\K) + \xx_{0} \) and then
\( \rups^{2} \ge C_{1} (\xx + \dimp+1) \)
ensures a dominating probability
\( 1 - 2 \ex^{-\xx_{0}} \) for the joint concentration event
\begin{EQA}[c]
    \RS_{1}
    =
    \bigcap_{k=1}^{\K} \bigl\{ \tilde{\thetav}_{k}(x) \in \Theta_{k}(\rups) \bigr\} .
\label{cuptkTk}
\end{EQA}
In what follows we suppose that the values \( \xx = \log(\K) + \xx_{0} \) and \(
\rups \) are fixed in a way that the probability of the set \( \RS_{1} \) is
sufficiently close to 1.
This allows us to restrict ourselves to the case when each estimator \(
\tilde{\thetav}_{k}(x) \) belongs to the local vicinity \( \Theta_{k}(\rups) \).
The conditions in \eqref{rupsconrqr} require that \( \rups^{2} \) is of the order \(
\log(\K) + (\dimp+1) \), and the local sample size \( N_{k} \) should be at least of the
same order.
This conclusion is in agreement with our numerical simulation results (not reported
here). An increase of the polynomial degree \( \dimp \) requires the increase of the
smallest bandwidth \( h_{1} \) approximately by factor \( \dimp + 1 \) for
getting table behavior of the method.

\subsection{Uniform quadratic approximation of the local excess}
The previous subsection stated that the chance for any of the estimator
\(\tilde{\thetav}_{k}(x) \) lying outside the neighborhood \(\Theta_{k}(\rups)\) is
small, therefore in this subsection, we focus on the stochastic behavior of
\(\tilde{\thetav}_{k}\) in \(\Theta_{k}(\rups)\).
The proposed estimation procedure is likelihood-based:
all quantities are defined in terms of the quasi log-likelihood functions
\( L(W^{(k)},\thetav) \).
Particularly, the properties of the \emph{excess}
\( L\bigl( W^{(k)},\tilde{\thetav}_{k}(x),\thetavs \bigr)
\eqdef L\bigl( W^{(k)},\tilde{\thetav}_{k}(x) \bigr) - L(W^{(k)},\thetavs) \)
play a very important role in the whole method.
The famous Wilks result claims that the excess is asymptotically
\( \chi^{2}_{\dimp+1} \).
Unfortunately the local parametric approach for a narrow local neighborhood
of the point \( x \) leads to a relatively small effective sample size \( N_{k} \),
and the asymptotic results cannot be validated.
The general parametric approach of \cite{spo:11} though allows to operate
with finite samples and it can be directly applied
to a local parametric analysis.

It holds
\begin{EQA}
    \nabla L( W^{(k)}, \thetavs)
    &=& - \sum_{i=1}^{n} \rho'_{\tau}(Y_{i} - \Psi_{i}^{\T} \thetavs) w_{i}^{(k)}
    \\
    &=&
    \sum_{i=1}^{n} \bigl\{ - \tau  + \Ind(Y_{i} - \Psi_{i}^{\T} \thetavs < 0) \bigr\}
        \, \Psi_{i} \, w_{i}^{(k)} .
\label{LWktts}
\end{EQA}
Further, for \( \rd = (\rddelta,\rdomega) \) and
\( \DP_{k}^{2} \) from \eqref{DPWqr}, define
\begin{EQA}
    \DP_{\rdb,k}^{2}
    &=&
    \DP_{k}^{2} (1 - \rddelta) - \rdomega \VP_{k}^{2} ,
\label{DPrdk}
    \\
    \xiv_{\rdb,k}
    & \eqdef &
    \DP_{\rdb,k}^{-1} \, \nabla L( W^{(k)}, \thetavs) ,
\label{xivrdk}
\end{EQA}
and similarly for \( \rdm \eqdef - \rd = (-\rddelta,-\rdomega) \).
The values \( \rddelta, \rdomega \) are assumed to be small enough to ensure that
\( \DP_{\rdb,k}^{2} \) is positive and the value
\begin{EQA}[c]
    \alp_{\rd,k}
    \eqdef
    \lambda_{\max}
        \bigl( \Id_{\dimp+1} - \DP_{\rd,k} \DP_{\rdm,k}^{-2} \DP_{\rd,k} \bigr)
\label{alprdk}
\end{EQA}
is small as well.
Finally, define
\begin{EQA}
    \Lab(W^{(k)}, \thetav, \thetavs)
    & \eqdef &
    (\thetav -\thetavs)^{\T} \nabla L( W^{(k)}, \thetavs)
        - \frac{1}{2} \| \DP_{\rdb,k} (\thetav - \thetavs)\|^{2}
    \\
    & = &
    \xiv_{\rdb,k}^{\T} \DP_{\rdb,k} (\thetav -\thetavs)
        - \frac{1}{2} \| \DP_{\rdb,k} (\thetav - \thetavs)\|^{2}
\label{Labqr}
\end{EQA}
and a similar definition for \( \Lam(W^{(k)}, \thetav, \thetavs) \).
\begin{theorem}
\label{Twilksqr}
Under the conditions \( {(E\!D_{0})} \), \( {(E\!D_{1})} \), \( {(\LL_{0})}\),
it holds for all \( k \le \K \) and all \( \thetav \in \Theta_{k}(\rups) \)
\begin{EQA}[c]
    \Lam(W^{(k)}, \thetav, \thetavs) - \err_{\rdb,k}
    \le
    L(W^{(k)}, \thetav, \thetavs)
    \le
    \Lab(W^{(k)}, \thetav, \thetavs) + \err_{\rdb,k} \, .
\label{LamLLabqr}
\end{EQA}
Here \( \err_{\rdb,k} \) are the random error terms which fulfill
with some \( C_{1}(\Ass) \) and \( C_{2}(\Ass) \)
the following conditions: for any \( \xx > 0 \)
with \( C_{1}(\Ass) \xx + C_{2}(\Ass) \le \yyc \)
\begin{EQA}
    \P_{\thetavs}\bigl( \rdomega^{-1} \err_{\rdb,k} > C_{1}(\Ass) \xx + C_{2}(\Ass) (\dimp+1) \bigr)
    & \le &
    C(\Ass) \ex^{-\xx} ,
    \\
    \E_{\thetavs} \bigl| \rdomega^{-1} \err_{\rdb,k} \bigr|^{r}
    \le
    C_{r}(\Ass),
\label{Pthexyyqr}
\end{EQA}
where \( \yyc \) is a constant of order \( N_{k} \).
\end{theorem}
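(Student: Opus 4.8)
\emph{The plan.} Theorem~\ref{Twilksqr} is the quantile-regression instance of the general non-asymptotic quadratic expansion of a quasi-log-likelihood of \cite{spo:11}, so the plan is to reduce to that machinery: identify the exact decomposition of the local excess, match its deterministic leading term to $\tfrac12\|\DP_{k}\cdot\|^{2}$ through the smoothness of the error densities, match its stochastic part to a $\VP_{k}$-quadratic plus a small remainder, and read \eqref{LamLLabqr} with the tail and moment bounds for $\err_{\rdb,k}$ off the corresponding general statement, whose hypotheses are exactly $(E\!D_{0})$, $(E\!D_{1})$, $(\LL_{0})$. Write $\uv = \thetav - \thetavs$, so that under $\P_{\thetavs}$ one has $Y_{i} - \Psi_{i}^{\T}\thetav = \varepsilon_{i} - \Psi_{i}^{\T}\uv$. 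Knight's identity for the check function, $\rho_{\tau}(\varepsilon_{i} - t) - \rho_{\tau}(\varepsilon_{i}) = - t\{\tau - \Ind(\varepsilon_{i} < 0)\} + \int_{0}^{t}\{\Ind(\varepsilon_{i}\le s) - \Ind(\varepsilon_{i}\le 0)\}\, ds$ with the integral term nonnegative and convex in $t$, gives after weighting by $w_{i}^{(k)}$ and summing
\begin{EQA}[c]
    L(W^{(k)},\thetav,\thetavs)
    =
    \uv^{\T}\nabla L(W^{(k)},\thetavs) - R_{k}(\uv),
    \qquad
    R_{k}(\uv) \eqdef \sum_{i} w_{i}^{(k)} \int_{0}^{\Psi_{i}^{\T}\uv} \bigl\{ \Ind(\varepsilon_{i}\le s) - \Ind(\varepsilon_{i}\le 0) \bigr\}\, ds \ge 0.
\end{EQA}
The linear term is exactly the one in $\Lab$ and $\Lam$, so it suffices to sandwich $R_{k}(\uv)$ between $\tfrac12\|\DP_{\rdb,k}\uv\|^{2}-\err_{\rdb,k}$ and $\tfrac12\|\DP_{\rdm,k}\uv\|^{2}+\err_{\rdb,k}$ uniformly over $\thetav\in\Theta_{k}(\rups)$.

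\emph{Deterministic part.} Taking expectations, $\E_{\thetavs}R_{k}(\uv) = \sum_{i} w_{i}^{(k)}\int_{0}^{\Psi_{i}^{\T}\uv}\{F_{i}(s)-F_{i}(0)\}\, ds$ with $F_{i}$ the distribution function of $\varepsilon_{i}$. On $\Theta_{k}(\rups)$ the definition of $N_{k}$ in \eqref{NWqr} forces $|\Psi_{i}^{\T}\uv| \le \rups\,\{\tau(1-\tau)\}^{-1/2}N_{k}^{-1/2}$, which is at most $\rho$ once the lower bound on $\rho^{2}N_{k}$ already imposed in Theorem~\ref{TLDqr} is invoked; hence $\Ass.4$ applies over the whole range of integration and gives $(1-\delta)\logdens_{i}(0)|s| \le |F_{i}(s)-F_{i}(0)| \le (1+\delta)\logdens_{i}(0)|s|$ for $|s| \le |\Psi_{i}^{\T}\uv|$. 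Integrating and summing,
\begin{EQA}[c]
    \frac{1-\delta}{2}\,\|\DP_{k}\uv\|^{2}
    \le
    \E_{\thetavs}R_{k}(\uv)
    \le
    \frac{1+\delta}{2}\,\|\DP_{k}\uv\|^{2}.
\end{EQA}
This is condition $(\LL_{0})$ made explicit and it is why $\rddelta$ must be taken at least of order $\delta$.

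\emph{Stochastic part --- the main obstacle.} Put $\zeta_{k}(\uv) = R_{k}(\uv) - \E_{\thetavs}R_{k}(\uv)$. Its summands are bounded by $|\Psi_{i}^{\T}\uv| \le \rho$, centred, and depart from a constant only on $\{|\varepsilon_{i}| \le |\Psi_{i}^{\T}\uv|\}$, an event of probability $\lesssim |\Psi_{i}^{\T}\uv|\logdens_{i}(0)$ by $\Ass.4$; with $(w_{i}^{(k)})^{2} \le w_{i}^{(k)}$, $\Ass.3$ and $\Ass.4$ this makes the increments $\zeta_{k}(\uv)-\zeta_{k}(\uv')$ sub-exponential with scale controlled by $\|\VP_{k}(\uv-\uv')\|$. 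This is precisely the point at which $(E\!D_{0})$ --- for the linear score $\nabla L(W^{(k)},\thetavs)$ of \eqref{LWktts}, whose summands lie in $[-\tau,1-\tau]$ with variance proxy $\VP_{k}^{2}$ and uniform-in-direction normalization supplied by $N_{k}$ --- and $(E\!D_{1})$ --- for the fluctuation of the ``stochastic Hessian'' $\sum_{i} w_{i}^{(k)}\Psi_{i}\Psi_{i}^{\T}\{\Ind(\varepsilon_{i}\le\Psi_{i}^{\T}\uv)-F_{i}(\Psi_{i}^{\T}\uv)\}$ --- are used. Feeding these into the generic-chaining/majorization bound for the supremum of a smooth random field over the $(\dimp+1)$-dimensional ellipsoid $\Theta_{k}(\rups)$ collected in the Appendix (the device used throughout \cite{spo:11}) yields
\begin{EQA}[c]
    \sup_{\thetav\in\Theta_{k}(\rups)}\Bigl\{\,|\zeta_{k}(\uv)| - \frac{\rdomega}{2}\|\VP_{k}\uv\|^{2}\,\Bigr\}
    \le
    \err_{\rdb,k},
\end{EQA}
where $\rdomega^{-1}\err_{\rdb,k}$ inherits from that bound the exponential tail $\P_{\thetavs}\bigl(\rdomega^{-1}\err_{\rdb,k} > C_{1}(\Ass)\xx + C_{2}(\Ass)(\dimp+1)\bigr) \le C(\Ass)\ex^{-\xx}$ in the range $C_{1}(\Ass)\xx + C_{2}(\Ass) \le \yyc$ --- the summand $\dimp+1$ being the covering exponent of the ellipsoid and $\yyc$, of order $N_{k}$, the range on which the truncation $|\Psi_{i}^{\T}\uv|\le\rho$ and the exponential-moment conditions remain in force --- and the moment bound $\E_{\thetavs}|\rdomega^{-1}\err_{\rdb,k}|^{r} \le C_{r}(\Ass)$ follows by integrating this tail together with a crude deterministic bound past the exponential regime.

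\emph{Assembly.} Combining the last two displays, for every $\thetav\in\Theta_{k}(\rups)$
\begin{EQA}[c]
    \frac{1-\delta}{2}\|\DP_{k}\uv\|^{2} - \frac{\rdomega}{2}\|\VP_{k}\uv\|^{2} - \err_{\rdb,k}
    \le
    R_{k}(\uv)
    \le
    \frac{1+\delta}{2}\|\DP_{k}\uv\|^{2} + \frac{\rdomega}{2}\|\VP_{k}\uv\|^{2} + \err_{\rdb,k},
\end{EQA}
and since $\DP_{\rdb,k}^{2} = (1-\rddelta)\DP_{k}^{2} - \rdomega\VP_{k}^{2}$ and $\DP_{\rdm,k}^{2} = (1+\rddelta)\DP_{k}^{2} + \rdomega\VP_{k}^{2}$ with $\rddelta\ge\delta$, the outer members equal $\tfrac12\|\DP_{\rdb,k}\uv\|^{2}-\err_{\rdb,k}$ and $\tfrac12\|\DP_{\rdm,k}\uv\|^{2}+\err_{\rdb,k}$. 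Substituting into the identity of the first paragraph gives $\Lam(W^{(k)},\thetav,\thetavs)-\err_{\rdb,k} \le L(W^{(k)},\thetav,\thetavs) \le \Lab(W^{(k)},\thetav,\thetavs)+\err_{\rdb,k}$, i.e.\ \eqref{LamLLabqr}, with the tail and moment bounds for $\err_{\rdb,k}$ as above. The genuinely hard point is the stochastic step: since $\rho_{\tau}$ has no second derivative there is no classical Taylor expansion, so the Hessian part exists only in the integrated, empirical-process sense and its uniform concentration over $\Theta_{k}(\rups)$ must be established with exactly the claimed $\rdomega$-scaling; everything else is bookkeeping once the general expansion of \cite{spo:11} is set up and $(E\!D_{0})$, $(E\!D_{1})$, $(\LL_{0})$ are verified as above.
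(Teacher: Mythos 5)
Your proposal is correct and follows essentially the same route as the paper: the paper's own "proof" consists of citing Theorem~3.1 of \cite{spo:11} for the sandwiching bound and verifying the conditions \( (E\!D_{0}) \), \( (E\!D_{1}) \), \( (\LL_{0}) \) in the appendix via the Bernoulli moment-generating-function bounds and the density condition \( \Ass.4 \), which is exactly what you do (your explicit Knight's-identity decomposition and the chaining step for the remainder are the internals of that cited general result, not a different argument). The one presentational difference is that you organize the expansion around the exact remainder \( R_{k}(\uv) \) rather than the generic centred process \( \zeta_{k} \), but the substance — deterministic part from \( \Ass.4 \) giving \( (\LL_{0}) \), stochastic part delegated to the uniform concentration machinery of \cite{spo:11} — is identical.
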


The sandwiching result \eqref{LamLLabqr} for each \( k \) follows from Theorem~3.1
of \cite{spo:11}.
It is only worth mentioning that the local sets \( \Theta_{k}(\rups) \) are
embedded:
\( \Theta_{1}(\rups) \supset \Theta_{2}(\rups)\supset \ldots \supset \Theta_{\K}(\rups) \), so it
suffices to check the bound \eqref{LamLLabqr} on \( \Theta_{1}(\rups) \) for each
\( k \le \K \).

The majorization bound \eqref{LamLLabqr} yields  that the maximum
of the process \( L(W^{(k)},\thetav,\thetavs) \) is also sandwiched between
the maximum of \( \Lab(W^{(k)}, \thetav, \thetavs) \) and of
\( \Lam(W^{(k)}, \thetav, \thetavs) \) up to a small random error term.
Moreover, \( \Lab(W^{(k)}, \thetav, \thetavs) \) is quadratic in \( \thetav \),
and its maximum is given by a quadratic form
\( \| \xiv_{\rdb,k} \|^{2}/2 \); similarly for \( \Lam(W^{(k)}, \thetav, \thetavs) \).
The next result presents a probabilistic bound for such quadratic forms.

\begin{theorem}
\label{Txivbound}
Assume \( \Ass.1 \) through \( \Ass.5 \).
There exist \( C_{1}(\Ass) \) and \( C_{2}(\Ass) \) such that for each \( \xx \)
with \( C_{1}(\Ass) \xx + C_{2}(\Ass) (\dimp+1) \le \yyc \) and \( k \le \K \), it holds
\begin{EQA}[c]
    \P_{\thetavs}\bigl\{
        \| \xiv_{\rdb,k} \|^{2} > C_{1}(\Ass) \xx + C_{2}(\Ass) (\dimp+1)
    \bigr\}
    \le
    2 \ex^{-\xx} .
\label{Pxivrdkqr}
\end{EQA}
Furthermore, for \( r > 0 \) and \( k \le \K \), it holds
\begin{EQA}[c]
    \E \| \xiv_{\rdb,k} \|^{2r}
    \le
    C_{r}(\Ass) \, .
\label{Exivrdkqr}
\end{EQA}
\end{theorem}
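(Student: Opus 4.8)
\emph{Proof strategy.} The plan is to recognise $\xiv_{\rdb,k} = \DP_{\rdb,k}^{-1}\,\nabla L(W^{(k)},\thetavs)$ as, up to a fixed linear rescaling, a normalised sum of independent, bounded, centred random vectors, and then to invoke the generic exponential bound for squared norms of such sums — the very device from \cite{spo:11} that underlies Theorem~\ref{Twilksqr} and the conditions $(E\!D_{0})$, $(E\!D_{1})$. Concretely, writing $\varepsilon_{i} = Y_{i} - \Psi_{i}^{\T}\thetavs$ and $\zeta_{i} \eqdef \Ind(\varepsilon_{i} < 0) - \tau$, formula \eqref{LWktts} reads $\nabla L(W^{(k)},\thetavs) = \sum_{i=1}^{n} \zeta_{i}\,\Psi_{i}\,w_{i}^{(k)}$ with the $\zeta_{i}$ independent, $|\zeta_{i}| \le 1$, $\E_{\thetavs}\zeta_{i} = 0$ and $\Var_{\thetavs}(\zeta_{i}) = \tau(1-\tau)$. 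Hence $\E_{\thetavs}\xiv_{\rdb,k} = 0$ and $\E_{\thetavs}\|\xiv_{\rdb,k}\|^{2} = \mathrm{tr}\bigl(\DP_{\rdb,k}^{-1}\VP_{k}^{2}\DP_{\rdb,k}^{-1}\bigr)$; since $\Ass.3$ gives $\VP_{k}^{2} \le \fis^{2}\DP_{k}^{2}$ while the smallness of $\rd = (\rddelta,\rdomega)$ gives $\DP_{\rdb,k}^{2} = (1-\rddelta)\DP_{k}^{2} - \rdomega\VP_{k}^{2} \ge c(\Ass)\DP_{k}^{2} \ge c(\Ass)\fis^{-2}\VP_{k}^{2}$, we get $\DP_{\rdb,k}^{-1}\VP_{k}^{2}\DP_{\rdb,k}^{-1} \le C(\Ass)\Id_{\dimp+1}$, so $\E_{\thetavs}\|\xiv_{\rdb,k}\|^{2} \le C_{2}(\Ass)(\dimp+1)$. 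The same applies with $\rdm = -\rd$ in place of $\rdb$.

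The only non-trivial input to the generic bound is an exponential-moment estimate for the normalised score $\VP_{k}^{-1}\nabla L(W^{(k)},\thetavs)$. Boundedness $|\zeta_{i}| \le 1$ together with $\Var_{\thetavs}(\zeta_{i}) = \tau(1-\tau)$ yields, via a Bennett/Bernstein estimate, a bound of the form
\[
    \log\E_{\thetavs}\exp\bigl\{\mu\,\gammav^{\T}\VP_{k}^{-1}\nabla L(W^{(k)},\thetavs)\bigr\}
    \le
    \tfrac{1}{2}\mu^{2}\|\gammav\|^{2}\bigl(1 + C(\Ass)\,|\mu|\,N_{k}^{-1/2}\bigr)
\]
for $|\mu|$ up to order $N_{k}^{1/2}$, the cubic correction being governed by the maximal one-summand influence, which by \eqref{NWqr} is of order $N_{k}^{-1/2}$ and which $\Ass.3$, $\Ass.2$, $\Ass.5$ keep uniformly bounded over $k \le \K$. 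Feeding this — together with the replacement $\VP_{k}^{-1} \mapsto \DP_{\rdb,k}^{-1}$ justified above — into the quadratic-form deviation lemma (a chaining argument over the unit sphere in $\R^{\dimp+1}$) yields, for $\xx$ with $C_{1}(\Ass)\xx + C_{2}(\Ass)(\dimp+1) \le \yyc$ (where $\yyc$ is of order $N_{k}$, the range in which the exponential-moment bound is effective),
\[
    \P_{\thetavs}\bigl\{\|\xiv_{\rdb,k}\|^{2} > \mathrm{tr}\bigl(\DP_{\rdb,k}^{-1}\VP_{k}^{2}\DP_{\rdb,k}^{-1}\bigr) + C_{1}(\Ass)\xx\bigr\}
    \le 2\ex^{-\xx}.
\]
Absorbing the trace into $C_{2}(\Ass)(\dimp+1)$ gives \eqref{Pxivrdkqr}; all constants depend on $\Ass$ only, so the bound is uniform in $k$.

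For the moment bound \eqref{Exivrdkqr} we integrate the tail, $\E_{\thetavs}\|\xiv_{\rdb,k}\|^{2r} = \int_{0}^{\infty} r\,t^{r-1}\,\P_{\thetavs}(\|\xiv_{\rdb,k}\|^{2} > t)\,dt$. Substituting $t = C_{2}(\Ass)(\dimp+1) + C_{1}(\Ass)\xx$ and using \eqref{Pxivrdkqr} over $0 \le \xx \lesssim \yyc$, together with $\int_{0}^{\infty}(a+\xx)^{r-1}\ex^{-\xx}\,d\xx < \infty$ for $a \ge 0$, bounds the corresponding part by $C_{r}(\Ass)$. The remaining mass — at deviations beyond the validity range of \eqref{Pxivrdkqr} — is controlled either by the second, linear regime of the generic tail estimate of \cite{spo:11}, or crudely by the deterministic bound $\|\xiv_{\rdb,k}\| \le \|\DP_{\rdb,k}^{-1}\|\sum_{i=1}^{n}\|\Psi_{i}\|\,w_{i}^{(k)}$ that follows from $|\zeta_{i}| \le 1$; either way it contributes at most $C_{r}(\Ass)$, which completes \eqref{Exivrdkqr}.

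The main obstacle is the generic quadratic-form deviation bound itself, with the \emph{correct} effective dimension ($\lesssim \dimp+1$, not something design-dependent) and the \emph{correct} validity range ($\xx \lesssim N_{k}$): this means tracking the $N_{k}^{-1/2}$ cubic term in the exponential-moment estimate for a weighted sum of bounded indicators and propagating it through the sphere-chaining step, uniformly over $k$. If the corresponding statement is taken verbatim from \cite{spo:11}, the proof reduces to verifying that $(E\!D_{0})$ and $(E\!D_{1})$ hold with constants depending only on $\Ass$ — for which $\Ass.2$ (geometric bandwidth growth, hence $k$-uniformity) and $\Ass.3$ (bounding $\VP_{k}^{2}$ by $\DP_{k}^{2}$) are the crucial ingredients.
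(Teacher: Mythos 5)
Your proposal is correct and follows essentially the same route as the paper: the paper also treats \( \nabla L(W^{(k)},\thetavs) \) as a weighted sum of centred, bounded Bernoulli variables, verifies the exponential-moment condition \( (E\!D_{0}) \) via the elementary bound \( g''(\delta)\le q(1-q)\ex^{\delta} \) on the Bernoulli log-moment generating function (with \( \gm \) of order \( N_{k}^{1/2} \)), uses \( \Ass.3 \) to compare \( \DP_{\rdb,k}^{2} \) with \( \VP_{k}^{2} \), and then delegates the quadratic-form deviation and moment bounds to the generic results of \cite{spo:11}. Your reconstruction of that generic step (sphere chaining plus tail integration) is consistent with what the cited reference supplies.
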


Consider the random set
\begin{EQA}[c]
    \RS_{2}
    =
    \bigcap_{k=1}^{\K} \bigl\{ \| \xiv_{\rdb,k} \| \le \rups \bigr\}.
\label{RS2qr}
\end{EQA}
Due to the bound of Theorem~\ref{Txivbound}, the choice
\( \rups^{2} = C_{1}(\Ass) (\xx + \log \K) + C_{2}(\Ass) (\dimp+1) \)ensures
that the probability of the set \( \RS_{2} \) is at least \( 1 - 2 \ex^{-\xx} \).

Below we restrict ourselves to the set \( \RS \) with
\( \RS = \RS_{1} \cap \RS_{2} \).
By construction
\begin{EQA}[c]
    \P(\RS)
    \ge
    1 - 4 \ex^{- \xx}
\label{PRS14xxx}
\end{EQA}
and on this set \( \tilde{\thetav}_{k} \in \Theta_{k}(\rups) \) and
\( \| \xiv_{\rdb,k} \| \le \rups \) for all \( k \le \K \).

The results of Theorem~\ref{Twilksqr} and \ref{Txivbound} have a number of important
corollaries; cf. \cite{spo:11}.

\begin{corollary}
\label{CTwilksqr}
It holds on \( \RS \) for every \( k \le \K \)
\begin{EQA}[c]
    \frac{1}{2} \| \xiv_{\rdm,k} \|^{2} - \err_{\rd,k}
    \le
    L(W^{(k)},\tilde{\thetav}_{k}(x),\thetavs)
    \le
    \frac{1}{2} \| \xiv_{\rdb,k} \|^{2} + \err_{\rdb,k} .
\label{LWkttqr}
\end{EQA}
\end{corollary}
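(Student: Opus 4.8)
The plan is to read the corollary off the two-sided majorization of Theorem~\ref{Twilksqr} by maximizing the two quadratic majorants explicitly, using the event \( \RS = \RS_{1} \cap \RS_{2} \) only to keep the points at which the process is evaluated inside the region where the majorization is valid. First, since \( \tilde{\thetav}_{k}(x) = \argmax_{\thetav} L(W^{(k)},\thetav) \), the excess equals the supremum of the centered process,
\begin{EQA}[c]
    L\bigl( W^{(k)},\tilde{\thetav}_{k}(x),\thetavs \bigr)
    =
    \sup_{\thetav} L\bigl( W^{(k)},\thetav,\thetavs \bigr) .
\end{EQA}
Substituting \( \uv = \DP_{\rdb,k}(\thetav - \thetavs) \) in \eqref{Labqr} turns \( \Lab(W^{(k)},\thetav,\thetavs) \) into the concave quadratic \( \xiv_{\rdb,k}^{\T} \uv - \frac{1}{2}\|\uv\|^{2} \), whose unconstrained maximum over \( \uv \in \R^{\dimp+1} \) equals \( \frac{1}{2}\|\xiv_{\rdb,k}\|^{2} \), attained at \( \thetav^{\bullet} = \thetavs + \DP_{\rdb,k}^{-1}\xiv_{\rdb,k} \); the same computation gives \( \sup_{\thetav}\Lam(W^{(k)},\thetav,\thetavs) = \frac{1}{2}\|\xiv_{\rdm,k}\|^{2} \), attained at \( \thetav^{\circ} = \thetavs + \DP_{\rdm,k}^{-1}\xiv_{\rdm,k} \).

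For the upper bound, on \( \RS_{1} \) we have \( \tilde{\thetav}_{k}(x) \in \Theta_{k}(\rups) \subset \Theta_{1}(\rups) \), so the right inequality of \eqref{LamLLabqr} applies at \( \thetav = \tilde{\thetav}_{k}(x) \) and, with the global maximization of \( \Lab \),
\begin{EQA}[c]
    L\bigl( W^{(k)},\tilde{\thetav}_{k}(x),\thetavs \bigr)
    \le
    \Lab\bigl( W^{(k)},\tilde{\thetav}_{k}(x),\thetavs \bigr) + \err_{\rdb,k}
    \le
    \frac{1}{2}\|\xiv_{\rdb,k}\|^{2} + \err_{\rdb,k} .
\end{EQA}
For the lower bound, I would evaluate the centered process at the maximizer \( \thetav^{\circ} \) of the lower majorant. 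From \eqref{DPrdk} one checks \( \DP_{\rdb,k}^{2} \le \DP_{\rdm,k}^{2} \), so \( \DP_{\rdm,k}^{-2} \le \DP_{\rdb,k}^{-2} \), whence \( \|\xiv_{\rdm,k}\| \le \|\xiv_{\rdb,k}\| \le \rups \) on \( \RS_{2} \); combined with \( \Ass.3 \) and the ordering of the \( \DP \)'s this puts \( \thetav^{\circ} \) in \( \Theta_{1}(\rups) \) (after the usual adjustment of the constant in \( \rups \)). The left inequality of \eqref{LamLLabqr}, the optimality of \( \tilde{\thetav}_{k}(x) \), and \( \Lam(W^{(k)},\thetav^{\circ},\thetavs) = \frac{1}{2}\|\xiv_{\rdm,k}\|^{2} \) then give
\begin{EQA}[c]
    L\bigl( W^{(k)},\tilde{\thetav}_{k}(x),\thetavs \bigr)
    \ge
    L\bigl( W^{(k)},\thetav^{\circ},\thetavs \bigr)
    \ge
    \frac{1}{2}\|\xiv_{\rdm,k}\|^{2} - \err_{\rd,k} .
\end{EQA}

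The only subtle point --- and the place where restricting to \( \RS = \RS_{1}\cap\RS_{2} \) is essential --- is that the maximizer \( \thetav^{\circ} \) of the lower quadratic majorant must itself lie in \( \Theta_{1}(\rups) \), the neighborhood on which \eqref{LamLLabqr} is valid; this is guaranteed by the uniform bound \( \|\xiv_{\rdm,k}\| \le \rups \) from Theorem~\ref{Txivbound} (through \( \RS_{2} \)) after converting the Euclidean norm of \( \xiv_{\rdm,k} \) into the \( \|\VP_{k}\cdot\| \)-norm via \( \Ass.3 \). The remaining steps are only the elementary maximization of quadratic forms and the nesting \( \Theta_{1}(\rups) \supset \cdots \supset \Theta_{\K}(\rups) \) already noted after Theorem~\ref{Twilksqr}.
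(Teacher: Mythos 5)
Your argument is correct and coincides with the route the paper itself sketches right after Theorem~\ref{Twilksqr}: sandwich the excess between the maxima of the two quadratic majorants \( \Lab \) and \( \Lam \), which equal \( \frac{1}{2}\|\xiv_{\rdb,k}\|^{2} \) and \( \frac{1}{2}\|\xiv_{\rdm,k}\|^{2} \) respectively, using \( \RS_{1} \) to place \( \tilde{\thetav}_{k}(x) \) in \( \Theta_{k}(\rups) \) for the upper bound and \( \RS_{2} \) (plus \( \Ass.3 \), up to a constant in \( \rups \)) to place the maximizer of \( \Lam \) there for the lower bound. You correctly identify and handle the one genuinely delicate point — that the maximizer of the lower majorant must itself lie in the neighborhood where the majorization holds — which the paper leaves implicit.
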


\begin{corollary}
\label{CTwilksqr2}
It holds on \( \RS \) for every \( k \le \K \)
\begin{EQA}
    \bigl\|
        \DP_{\rdb,k} \bigl( \tilde{\thetav}_{k}(x) - \thetavs \bigr) - \xiv_{\rdb,k}
    \bigr\|^{2}
    & \le &
    4 \err_{\rdb,k} + \alp_{\rd,k} \| \xiv_{\rdb,k} \|^{2} ,
    \\
    \bigl\|
        \DP_{\rdb,k} \bigl( \tilde{\thetav}_{k}(x) - \thetavs \bigr)
    \bigr\|
    & \le &
    2 \err_{\rdb,k}^{1/2} + \bigl( 1 + \alp_{\rd,k}^{1/2} \bigr) \| \xiv_{\rdb,k} \| .
\label{DPktttqr}
\end{EQA}
\end{corollary}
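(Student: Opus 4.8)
The plan is to combine the two-sided bound on the excess from Corollary~\ref{CTwilksqr} with the quadratic majorant $\Lab$ of Theorem~\ref{Twilksqr}, using a completing-the-square identity to isolate the deviation $\DP_{\rdb,k}\bigl(\tilde{\thetav}_{k}(x)-\thetavs\bigr) - \xiv_{\rdb,k}$. Throughout one works on the event $\RS = \RS_{1}\cap\RS_{2}$, on which $\tilde{\thetav}_{k}(x)\in\Theta_{k}(\rups)$ and $\|\xiv_{\rdb,k}\|\le\rups$ for every $k\le\K$, so that the sandwiching bound \eqref{LamLLabqr} may be applied at $\thetav=\tilde{\thetav}_{k}(x)$.

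First I would set $\uv_{k}\eqdef\DP_{\rdb,k}\bigl(\tilde{\thetav}_{k}(x)-\thetavs\bigr)$ and rewrite the quadratic majorant by completing the square. Since $\Lab(W^{(k)},\thetav,\thetavs)=\xiv_{\rdb,k}^{\T}\DP_{\rdb,k}(\thetav-\thetavs)-\tfrac12\|\DP_{\rdb,k}(\thetav-\thetavs)\|^{2}$, one gets $\Lab(W^{(k)},\tilde{\thetav}_{k}(x),\thetavs)=\tfrac12\|\xiv_{\rdb,k}\|^{2}-\tfrac12\|\uv_{k}-\xiv_{\rdb,k}\|^{2}$. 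Plugging this into the upper bound $L(W^{(k)},\tilde{\thetav}_{k}(x),\thetavs)\le\Lab(W^{(k)},\tilde{\thetav}_{k}(x),\thetavs)+\err_{\rdb,k}$ of Theorem~\ref{Twilksqr} yields $\tfrac12\|\uv_{k}-\xiv_{\rdb,k}\|^{2}\le\tfrac12\|\xiv_{\rdb,k}\|^{2}+\err_{\rdb,k}-L(W^{(k)},\tilde{\thetav}_{k}(x),\thetavs)$.

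Next I would bound the excess from below. Corollary~\ref{CTwilksqr} gives $L(W^{(k)},\tilde{\thetav}_{k}(x),\thetavs)\ge\tfrac12\|\xiv_{\rdm,k}\|^{2}-\err_{\rd,k}$, and it remains to compare $\|\xiv_{\rdm,k}\|^{2}$ with $\|\xiv_{\rdb,k}\|^{2}$. Writing $\nabla L(W^{(k)},\thetavs)=\DP_{\rdb,k}\xiv_{\rdb,k}$ and $\xiv_{\rdm,k}=\DP_{\rdm,k}^{-1}\nabla L(W^{(k)},\thetavs)$, one has $\|\xiv_{\rdm,k}\|^{2}=\xiv_{\rdb,k}^{\T}\DP_{\rdb,k}\DP_{\rdm,k}^{-2}\DP_{\rdb,k}\xiv_{\rdb,k}\ge(1-\alp_{\rd,k})\|\xiv_{\rdb,k}\|^{2}$ straight from the definition \eqref{alprdk} of $\alp_{\rd,k}$. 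Combining the last three displays gives $\|\uv_{k}-\xiv_{\rdb,k}\|^{2}\le\alp_{\rd,k}\|\xiv_{\rdb,k}\|^{2}+2(\err_{\rdb,k}+\err_{\rd,k})$; majorizing $\err_{\rd,k}$ by $\err_{\rdb,k}$ (both obey \eqref{Pthexyyqr}) produces the first asserted inequality. The second then follows from $\|\uv_{k}\|\le\|\uv_{k}-\xiv_{\rdb,k}\|+\|\xiv_{\rdb,k}\|$ together with $\sqrt{a+b}\le\sqrt{a}+\sqrt{b}$.

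The only genuinely delicate point is the lower bound on the excess invoked above, i.e. Corollary~\ref{CTwilksqr}: its validity on $\RS$ hinges on the maximizer of the lower majorant $\Lam(W^{(k)},\cdot,\thetavs)$ lying in $\Theta_{k}(\rups)$, which is precisely what the calibration of $\rups$ through Theorem~\ref{Txivbound} and the conditions $\Ass.2$--$\Ass.3$ (relating $\VP_{k}^{2}$, $\DP_{k}^{2}$ and $\DP_{\rdm,k}^{2}$) is designed to guarantee. Once this two-sided control of the excess is in hand, the remainder is the elementary completing-the-square and triangle-inequality bookkeeping above, exactly as in the corresponding step of \cite{spo:11}.
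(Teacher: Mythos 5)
Your proof is correct and follows essentially the same route the paper intends: the paper gives no argument of its own for this corollary but defers to \cite{spo:11}, whose proof is exactly your completing-the-square identity $\Lab(W^{(k)},\tilde{\thetav}_{k}(x),\thetavs)=\tfrac12\|\xiv_{\rdb,k}\|^{2}-\tfrac12\|\uv_{k}-\xiv_{\rdb,k}\|^{2}$ combined with the lower bound $L(W^{(k)},\tilde{\thetav}_{k}(x),\thetavs)\ge\tfrac12\|\xiv_{\rdm,k}\|^{2}-\err_{\rd,k}$ and $\|\xiv_{\rdm,k}\|^{2}\ge(1-\alp_{\rd,k})\|\xiv_{\rdb,k}\|^{2}$. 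The one step you flag as a majorization, $\err_{\rd,k}\le\err_{\rdb,k}$, is harmless here since the paper's Theorem~\ref{Twilksqr} in fact states both sides of the sandwich with the same error term $\err_{\rdb,k}$ (the $\rd$ versus $\rdb$ subscripts are used interchangeably), so your constant $4$ in the first display is exactly the intended one.
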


The result of Corollary~\ref{CTwilksqr} can be viewed as a non-asymptotic version
of Wilks Theorem.
It claims that the twice excess \( 2L(W^{(k)},\tilde{\thetav}_{k}(x),\thetavs) \) can be
approximated by the quadratic form \( \| \xiv_{\rdb,k} \|^{2} \).
By definition \eqref{xivrdk}, each vector \( \xiv_{\rdb,k} \) is the normalized
score \( \nabla L(W^{(k)},\thetavs) \).
This score is a weighted and centered sum of Bernoulli random variables
\( \Ind(Y_{i} - \Psi_{i}^{\T} \thetavs < 0) \) with
\( \P_{\thetavs}\bigl( Y_{i} - \Psi_{i}^{\T} \thetavs < 0 \bigr) = \tau \); see \eqref{LWktts}.
So, its distribution under \( \P_{\thetavs} \) only depends on the design
\( X_{1},\ldots,X_{n} \) and on the weights \( w_{i}^{(k)} \).
This even applies to the joint distribution of all the \( \xiv_{\rdb,k} \) for
\( k=1,\ldots,\K \).
This important pivotality property explains that the computed critical values
\( \zz_{k} \) are almost independent of the underlying distribution of the errors
\( Y_{i} - f(X_{i}) \).
Further, by our identifiability condition (A3), \( \DP_{\rdb,k}^{2} \) is of the
same order as the variance
\( \VP_{k}^{2} = \Var\bigl\{ \nabla L(W^{(k)},\thetavs) \bigr\} \), so
\( \xiv_{\rdb,k} \)
is nearly normal under usual assumptions, thus the twice excess is asymptotically
\( \chi^{2}_{\dimp+1} \).

One can summarize the obtained general results as follows.
On the set \( \RS \) of dominating probability, each estimator \( \tilde{\thetav}_{k}(x) \)
belongs to the local vicinity \( \Theta_{k}(\rups) \) which yields
the bounds \eqref{LWkttqr}, \eqref{DPktttqr}.
Moreover, the random quantities \( \err_{\rdb,k} \) and \( \xiv_{\rdb,k} \) obey the
deviation and moment bounds of Theorem~\ref{Twilksqr} and Theorem~\ref{Txivbound}.


\subsubsection{Conditions from \cite{spo:11} \label{subcon}}
Here we list the conditions from \cite{spo:11} which are assumed to be fulfilled
for each local likelihood \( L(W^{(k)},\thetav) \), \( k \le \K \).
Some value \( \rups \) is assumed to be fixed for all conditions.
It separates the local zone of local quadratic approximation and the large
deviation zone.
The assumption are stated under the true data distribution \( \P \).
However, we apply the assumptions only in the case of linear parametric structure
with \( \fs(\cdot) \equiv \fs_{\thetavs}(\cdot) \).
Define
\begin{EQA}
    \zeta_{k}(\thetav)
    & \eqdef &
    L(W^{(k)},\thetav) - \E L(W^{(k)},\thetav)
    \\
    &=&
    -\sum_{i=1}^{n} \Bigl\{ \rho_{\tau}(Y_{i} - \Psi_{i}^{\T} \thetav)
        - \E \{ \rho_{\tau}(Y_{i} - \Psi_{i}^{\T} \thetav) \Bigr\} w_{i}^{(k)} .
\label{zetavkqr}
\end{EQA}
Also denote \( \nabla \zeta_{k}(\thetav) = \frac{d}{d\thetav} \zeta(\thetav) \).
The majorization bound \eqref{LamLLabqr} of Theorem~\ref{Twilksqr}
is stated in \cite{spo:11} under the following conditions.

\begin{description}
\item[\( \bb{(E\!D_{0})} \)]
    There exists a positive symmetric matrix \( \VP_{k}^{2} \),
    and constants \( \gm > 0 \) and \( \nunu \ge 1 \) such that
    \( \Var\bigl\{ \nabla \zeta_{k}(\thetav) \bigr\} \leq \VP_{k}^{2} \) and
    for all \( \lambda \) with \( | \lambda | \le \gm \),
\begin{EQA}[c]
    \sup_{\gammav \in \R^{\dimp+1}}
    \log \E \exp \biggl\{
        \lambda \frac{\gammav^{\T} \nabla \zeta_{k}(\thetavs)}{\| \VP_{k} \gammav\| }
    \biggr\}
    \le
    \nunu^{2} \lambda^{2} /2 ;
\label{eq:ed}
\end{EQA}
With this matrix \( \VP_{k} \), define the local set
\begin{EQA}[c]
    \Theta_{k}(\rups)
    =
    \{ \thetav: \| \VP_{k} (\thetav - \thetavs) \| \le \rups \} ;
\label{Thetakqr}
\end{EQA}

\item[\( \bb{(E\!D_{1})} \)]
    For each \( \rr \le \rups \), there exists a constant
    \( \rdomega(\rr) \le 1/2 \) such that it holds for all \( \thetav \in \Theta_{k}(\rups) \)
    and \(  | \lambda | \le \gm \):
\begin{EQA}[c]
    \sup_{\gammav \in \R^{\dimp+1}} \log \E \exp\biggl\{ \lambda
        \frac{\gammav^{\T} \{\nabla \zeta_{k}(\thetav) - \nabla \zeta_{k}(\thetavs)\}}
             {\rdomega(\rr) \| \VP_{k} \gammav \|}
    \biggr\}
    \le
    \nunu^{2} \lambda^{2}/2 ;
\end{EQA}

   \item[\( \bb{(\LL_{0})} \)]
    \textit{There are a positive matrix \( \DP_{k} \)
    and for each \( \rr \le \rups \) and a constant \( \rddelta(\rr) \le 1/2 \),
    such that it holds for all \( \thetav \in \Theta_{k} \)
    };
    \begin{EQA}[c]
\label{LmgfquadEL}
        \biggl|
            \frac{- 2 \E L(W^{(k)},\thetav,\thetavs)}
                 {\| \DP_{k} (\thetav - \thetavs) \|^{2}}
            - 1
        \biggr|
        \le
        \rddelta(\rr) ;
\end{EQA}

\item[\( \bb{(E\rr)} \)]
For any \( \rr \ge \rups \), there exist a value \( \gm(\rr) > 0 \) and a constant \( \nunu \)
    such that for all \( |\lambda| \le \gm(\rr) \),
\begin{EQA}[c]
    \sup_{\gammav \in \R^{\dimp+1}} \sup_{\thetav \in \Theta_k(\rr)}
    \log \E
        \exp\biggl\{
            \lambda \frac{\gammav^{\T} \nabla \zeta_{k}(\thetav)}{\| \VP_{k} \gammav \|}
        \biggr\}
    \le
    \nunu^{2} \lambda^{2}/2;
\end{EQA}

\item[\( \bb{(\LL{\rr})} \)]
    For each \( \rr \ge \rups \) and any \( \thetav \) with
    \( \| \VP_{k} (\thetav - \thetavs)\| = \rr \),
\begin{EQA}[c]
    \frac{- \E L(W^{(k)},\thetav,\thetavs)}
         {\| \VP_{k}(\thetav - \thetavs)\|^{2}}
    \ge
    \gmi(\rr) > 0.
\end{EQA}
\end{description}

All these conditions are assumed to be fulfilled for each \( k \le \K \).
Conditions\\ \( {(E\!D_{0})}, {(E\!D_{1})}\),\({(\LL_{0})}\) are local conditions which
should be applied on the local set \( \Theta_{k}(\rups) \), while
\( {(\LL{\rr})}, {(E\rr)} \) are global conditions which we apply on the complement
of \( \Theta_{k}(\rups) \).
Also \( {(E\!D_{0})}, {(E\!D_{1})}, {(E\rr)}\) are smoothness or moment assumptions
on the log likelihood process, and the conditions \({(\LL_{0})},{(\LL{\rr})}\)
ensure the identifiability properties.

\subsubsection{Proof of \( \bb{(E\rr)} \),  \( \bb{(E\!D_{0})} \) and \( \bb{(E\!D_{1})} \).}

Let us fix some \( k \le \K \).
Let \( N_{k} \) be the local sample size for the weighting scheme \( W^{(k)} \); see
\eqref{NWqr}.
Let also \( \rups \) by fixed in a way that
\( \rups |\Psi_{i}| \le \rho N_{k} \) for all
\( i \) with \( w_{i}^{(k)} > 0 \), that is, for all \( X_{i} \)
with \( |X_{i} - x| \le h_{k} \).

First we check \( {(E\rr)} \).
It holds by definition
\begin{EQA}
    \nabla \zeta_{k}(\thetav)
    &=&
    \sum_{i=1}^{n} \Psi_{i} \bigl[
        \Ind(Y_{i} - \Psi_{i}^{\T} \thetav < 0)
        - \P(Y_{i} - \Psi_{i}^{\T} \thetav < 0)
    \bigr] w_{i}^{(k)}
    \\
    &=&
    \sum_{i=1}^{n} \Psi_{i} \varepsilon_{i}(\thetav) w_{i}^{(k)}
\end{EQA}
with \( \varepsilon_{i}(\thetav) \eqdef \Ind(Y_{i} - \Psi_{i}^{\T} \thetav < 0)
- \P(Y_{i} - \Psi_{i}^{\T} \thetav < 0) \).
Obviously
\( \Ind(Y_{i} - \Psi_{i}^{\T} \thetav < 0) \) is a Bernoulli random variable with
the parameter \( \bern_{i}(\thetav) \eqdef \P(Y_{i} - \Psi_{i}^{\T} \thetav < 0) \)
and
\begin{EQA}
    \log \E \exp\{ \delta \varepsilon_{i}(\thetav)\}
    & = &
    \log\bigl\{
        1 - \bern_{i}(\thetav)
        + \bern_{i}(\thetav) \ex^{\delta}
     \bigr\}
    - \delta \bern_{i}(\thetav) .
\label{Eexepstpiqr}
\end{EQA}
The function \( g(\delta) \eqdef \log\bigl( 1 - q + q \ex^{\delta} \bigr) - q \delta \)
fulfills for any \( q < 1 \)
\begin{EQA}[c]
    g(0) = 0,
    \qquad
    g'(0) = 0,
    \qquad
    g''(\delta)
    \le q (1-q) \ex^{\delta}.
\label{f000de}
\end{EQA}
This implies
\begin{EQA}
    \log \E \exp\{ \delta \varepsilon_{i}(\thetav)\}
    & \le &
    \bern_{i}(\thetav) \{1-\bern_{i}(\thetav)\} \nunu^{2} \delta^{2}/2,
    \qquad
    |\delta| \le \gmiid
\end{EQA}
for a constant \( \nunu \ge 1 \)
depending on \( \gmiid \) only.
Therefore, it holds for any \( \gammav \in \R^{p+1} \) and \( \rho > 0 \) with
\( \rho  | \gammav^{\T} \Psi_{i}| \le \gmiid \) that,
\begin{EQA}
    \log \E \exp \{\rho \gammav^{\T} \nabla \zeta_{k}(\thetav)\}
    &\le&
    \log\E \exp\left\{
        \rho \sum_{i=1}^{n} \gammav^{\T} \Psi_{i}\varepsilon_{i}(\thetav) w_{i}^{(k)}
    \right\}
    \\
    &\le&
    \sum_{i=1}^{n}
        \log\E \exp\bigl\{
            \rho \gammav^{\T} \Psi_{i}\varepsilon_{i}(\thetav) w_{i}^{(k)}
        \bigr\}
    \\
    &\le&
    \sum_{i=1}^{n} \rho^{2}
        \bigl| \gammav^{\T} \Psi_{i} w_{i}^{(k)} \bigr|^{2} \bern_{i}(\thetav)
        \{ 1-\bern_{i}(\thetav) \} \nunu^{2}/2
    \\
    &\le&
    \nunu^{2} \rho^{2} \| \VP_{k}(\thetav) \gammav \|^{2}/2 ,
\end{EQA}
where
\begin{EQA}[c]
    \VP_{k}^{2}(\thetav)
    \eqdef
    \sum_{i=1}^{n}  \bern_{i}(\thetav) \{1-\bern_{i}(\thetav)\}
        \Psi_{i} \Psi_{i}^{\T} \bigl| w_{i}^{(k)} \bigr|^{2}.
\end{EQA}
This yields \( {(E\!D_{0})} \) with
\( \VP_{k}^{2} \eqdef \VP_{k}^{2}(\thetavs) \) and \( \gm = \gmiid N_{k}^{1/2} \);
see \eqref{NWqr}.
Furthermore, the linear PA \( \fs \equiv \fs_{\thetavs} \) yields
\( \bern_{i}(\thetavs) = \tau \) and hence
\begin{EQA}
    \VP_{k}^{2}(\thetavs)
    &=&
    4 {\tau (1 - \tau)} \VPb_{k}^{2}
\label{VPbk2qr}
\end{EQA}
for
\begin{EQA}
    \VPb_{k}^{2}
    & \eqdef &
    \frac{1}{4} \sum_{i=1}^{n} \Psi_{i} \Psi_{i}^{\T} \bigl| w_{i}^{(k)} \bigr|^{2}.
\label{VPbkdeqr}
\end{EQA}
For any \( \thetav \in \Theta \), it obviously holds
 \( \VP_{k}^{2}(\thetav) \le \VPb_{k}^{2}
\le \bigl\{ 4 \tau (1 - \tau) \bigr\}^{-1} \VP_{k}^{2} \), and thus
\( {(E\rr)} \) is fulfilled with
\( \gm^{2}(\rr) \equiv 4 \tau (1 - \tau) N_{k} \gmiid^{2} \).

Next we check the local condition \( {(E\!D_{1})} \).
For \( \rr \le \rups \) and \( \thetav \in \Theta_{k}(\rr) \), it holds
\begin{EQA}[c]
    \nabla\zeta_{k}(\thetav) - \nabla\zeta_{k}(\thetavs)
    =
    \sum_{i=1}^{n} \Psi_{i}
        \bigl\{ \varepsilon_{i}(\thetav) - \varepsilon_{i}(\thetavs) \bigr\} w_{i}^{(k)} .
\end{EQA}
Similarly to the above, the identity
\( \E \bigl\{ \varepsilon_{i}(\thetav) - \varepsilon_{i}(\thetavs) \bigr\}
= \bern_{i}(\thetav)- \bern_{i}(\thetavs) \) implies
\begin{EQA}
    && \nquad
    \log \E \exp\bigl[
        \lambda \gammav^{\T}
        \bigl\{ \nabla\zeta(\thetav) - \nabla\zeta(\thetavs) \bigr\}
    \bigr]
    \\
    & \le &
    2 \nunu^{2} \lambda^{2} \| \VPb_{k} \gammav \|^{2}
    \max_{i \le n} \bigl| \bern_{i}(\thetav)- \bern_{i}(\thetavs) \bigr|
        \Ind\bigl( w_{i}^{(k)} > 0 \bigr)
    \\
    & \le &
    \omega(\rr) \nunu^{2} \lambda^{2} \| \VPb_{k} \gammav \|^{2} /2
\end{EQA}
with
\begin{EQA}[c]
    \omega(\rr)
    \eqdef
    4 \max_{i \le n} \sup_{\thetav \in \Theta_{k}(\rups)}
        \bigl\{ \bern_{i}(\thetav)- \bern_{i}(\thetavs) \bigr\}
        \Ind\bigl( w_{i}^{(k)} > 0 \bigr).
\end{EQA}
Further, for any \( \thetav \in \Theta_{k}(\rr) \), it holds
\( \bigl| \Psi_{i}^{\T} (\thetav - \thetavs) \bigr| w_{i}^{(k)} \le \rr / N_{k} \)
\begin{EQA}
    |\bern_{i}(\thetav) - \bern_{i}(\thetavs)| \Ind\bigl( w_{i}^{(k)} > 0 \bigr)
    & \le &
    C | \Psi_{i}^{\T}(\thetav - \thetavs) | \Ind\bigl( w_{i}^{(k)} > 0 \bigr)
    \\
    & \le &
    C N_{k}^{-1/2} \| \VPb_{k}(\thetav - \thetavs)\|
    \le
    C  N_{k}^{-1/2} \rr,
\end{EQA}
and \( {(E\!D_{1})} \) holds with \( \rdomega(\rr) = N_{k}^{-1/2} \rr \).

\subsubsection{The \({(\LL_{0})}\) and \( {(\LL r)} \) conditions}
These identifiability conditions will be checked under the measure
\( \P_{\thetavs} \) corresponding to the linear quantile function
\( \fs(\cdot) = \fs_{\thetavs}(\cdot) \).
It holds
\begin{EQA}[c]
    \nabla \E_{\thetavs} L(W^{(k)},\thetav)
    =
    - \sum_{i=1}^{n} \Psi_{i} \Bigl\{
        \tau - \P\bigl( Y_{i}- \Psi_{i}^{\T} \thetav < 0 \bigr)
    \Bigr\} w_{i}^{(k)}
\end{EQA}
and
\begin{EQA}[c]
    - \nabla^{2} \E_{\thetavs} L(W^{(k)},\thetav)
    =
    \sum_{i=1}^{n} \Psi_{i} \Psi_{i}^{\T}
        \logdens_{i}\bigl( \Psi_{i}^{\T}(\thetav - \thetavs) \bigr) w_{i}^{(k)}
    \eqdef
    \DP^{2}_{k}(\thetav).
\end{EQA}
Now the Taylor expansion of \( - \E_{\thetavs} L(W^{(k)},\thetav) \) at
the extreme point \( \thetav = \thetavs \) implies
\begin{EQA}
    \E_{\thetavs} L(W^{(k)},\thetav) - \E_{\thetavs}L(W^{(k)},\thetavs)
    &=&
    - \frac{1}{2} \sum_{i=1}^{n}  | \Psi_{i}^{\T} (\thetav - \thetavs) |^{2}
        \logdens_{i}\bigl(  \Psi_{i}^{\T}(\thetavd-\thetavs) \bigr) w_{i}^{(k)}
    \\
    &=&
    - \frac{1}{2} (\thetav - \thetavs)^{\T} \DP_{k}^{2}(\thetavd)(\thetav - \thetavs)
\end{EQA}
for some \( \thetavd \in [\thetav,\thetavs] \).
Further, for any \( \thetav \in \Theta_k(\rups) \),
it holds by \( \Ass. 4\)
\begin{EQA}[c]
    \Bigl|
        \frac{\logdens_{i}\bigl(  \Psi_{i}^{\T} (\thetav - \thetavs) \bigr)}{\logdens_{i}(0)}
        - 1
    \Bigr|
    \Ind\bigl( w_{i}^{(k)} > 0 \bigr)
    \le
    \delta ,
\label{densdqr}
\end{EQA}
and \( (\LL_{0}) \) follows.
The global identifiability condition \( {(\LL r)} \) is fulfilled if
\( \rr^{2} \ge C_{1}  (\xx + \dimp+1) \) for some fixed constants \( C_{1} \);
see \cite{spo:11}, Section 5.3, for more details.


\subsection{Theorem for critical values}
\label{eq:theocrit}

The theorem below assures an upper bound for the critical values \( \zz_{k} \)
constructed in Section~\ref{SpropCV}.
To avoid technical burdens, we restrict the analysis to the random set \( \RS \) and
discard the large deviation probability part on its complement.
The notation \( \PR(B) \) for a set \( B \) means \( \P(B \cap \RS) \).

\begin{theorem}
Suppose that \( r>0,\alpha >0 \).
There exist constants \( a_{0},a_{1}\) s.t. the propagation
condition is fulfilled with the choice of
\begin{EQA}[c]
        \zz_{k}
        =
        a_{0} + \log(\alpha^{-1}) + a_{1} r (\K - k) + r \log((\dimp+1))
\label{eq:crit}
\end{EQA}
\end{theorem}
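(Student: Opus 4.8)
The plan is to bound, for each step $k$, the "false alarm" risk $\E_{\thetavs} L^{r}\bigl( W^{(k)},\tilde{\thetav}_{k}(x),\hat{\thetav}_{k}(x) \bigr)$ and show that with the proposed $\zz_{k}$ it does not exceed $\alpha \riskt_{r}$, as required by \eqref{eq:propa}. The starting point is the observation that a false alarm at step $k$, i.e.\ $\{\tilde{\thetav}_{k}(x) \neq \hat{\thetav}_{k}(x)\}$, forces a rejection in one of the tests \eqref{eq:confs}: there is some $\ell < k$ with $L\bigl( W^{(\ell)},\tilde{\thetav}_{\ell}(x),\tilde{\thetav}_{k}(x) \bigr) > \zz_{\ell}$. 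On the dominating set $\RS = \RS_{1}\cap\RS_{2}$ from \eqref{PRS14xxx}, Corollary~\ref{CTwilksqr} and Corollary~\ref{CTwilksqr2} give a quadratic-form control of all the relevant excesses in terms of $\| \xiv_{\rdb,k} \|^{2}$ and the error terms $\err_{\rdb,k}$, and the triangle inequality in the $\DP_{\rdb,\ell}$-norm lets us bound $L\bigl( W^{(\ell)},\tilde{\thetav}_{\ell}(x),\tilde{\thetav}_{k}(x) \bigr)$ by a constant multiple of $\| \xiv_{\rdb,\ell} \|^{2} + \| \DP_{\rdb,\ell}(\tilde{\thetav}_{k}-\thetavs)\|^{2} + \err_{\rdb,\ell}$, and then — using $\Ass.2$ to compare $\DP_{\ell}$ with $\DP_{k}$ — by $C\bigl(\| \xiv_{\rdb,\ell} \|^{2} + \| \xiv_{\rdb,k} \|^{2} + \err_{\rdb,\ell} + \err_{\rdb,k}\bigr)$.

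Next I would turn the deviation bounds of Theorem~\ref{Txivbound} and Theorem~\ref{Twilksqr} into a tail bound for $L\bigl( W^{(\ell)},\tilde{\thetav}_{\ell}(x),\tilde{\thetav}_{k}(x) \bigr)$: for $\zz$ of the form $C_{1}\xx + C_{2}(\dimp+1)$ one gets $\P_{\thetavs}\bigl\{L\bigl( W^{(\ell)},\tilde{\thetav}_{\ell}(x),\tilde{\thetav}_{k}(x) \bigr) > \zz\bigr\} \le C\,\ex^{-\xx}$, and likewise a bound on the $r$-th moment of the excess restricted to the event that it exceeds $\zz_\ell$. Summing over the at most $\K$ pairs $(\ell,k)$ and taking $\xx$ of order $\log(\K/\alpha) + \ldots$ produces the $\log(\alpha^{-1})$ and the $r\log(\K-k)$-type contributions in \eqref{eq:crit}; the $r\log(\dimp+1)$ term comes from the $C_{2}(\dimp+1)$ dimensional offset in the quadratic forms, combined with the outer power $r$ and the fact that $\E L^r \le (\E e^{L})^{?}$-type inequalities turn additive offsets into multiplicative $\ex^{r\cdot}$ factors relative to $\riskt_{r}$. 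Because the propagation construction in Section~\ref{SpropCV} allocates a budget $\alpha/(\K-1)$ to each step and chooses $\zz_k$ minimally, it suffices to exhibit \emph{one} admissible choice meeting the per-step budget, which is exactly what the above tail bound with $\xx \asymp \log(\K-k) + \log\alpha^{-1}$ does; monotonicity of the procedure in the $\zz$'s then gives that the actual selected values are no larger.

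The two places needing care are: (i) controlling the dependence on $k$ correctly — a false alarm at step $k$ can be triggered by \emph{any} $\ell<k$, so the critical value $\zz_\ell$ must be large enough to survive being tested against all later $\tilde{\thetav}_{k}$, which is why the bound grows like $r(\K-k)$ (more precisely $\zz_\ell$ controls tests with $k>\ell$, and summing the budget contributions $\alpha/(\K-1)$ over those $\K-\ell$ indices forces $\zz_\ell \gtrsim \log(\K-\ell)$); (ii) the passage from high-probability (deviation) bounds on $\RS$ to the full expectation $\E_{\thetavs} L^{r}$ in \eqref{eq:propa}, handled by splitting on $\RS$ and using \eqref{PRS14xxx} together with the uniform moment bounds $\E_{\thetavs}\bigl[ L^{r}(\cdots)\Ind\{\tilde{\thetav}_{k}\notin\Theta_{k}(\rups)\}\bigr] \le C\ex^{-\xx}$ of Theorem~\ref{TLDqr} to absorb the complement. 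I expect the main obstacle to be step (i): cleanly propagating the quadratic-form sandwich through the chain of confidence-set checks while keeping explicit track of how the constants $\nud,\nuu$ from $\Ass.2$ accumulate across the $k-\ell$ nested scales, so that the final $a_{1} r(\K-k)$ dependence is linear (and not, say, quadratic) in $\K-k$. Once that linear-in-$(\K-k)$ tail is in hand, matching it against the per-step budget $\alpha/(\K-1)$ and reading off $a_{0}, a_{1}$ is routine.
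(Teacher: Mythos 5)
Your overall architecture matches the paper's: restrict to the dominating set \( \RS \), use the quadratic sandwich of Theorem~\ref{Twilksqr} and Corollaries~\ref{CTwilksqr}--\ref{CTwilksqr2} to bound the cross-scale excess \( L\bigl(W^{(\ell)},\tilde{\thetav}_{\ell}(x),\tilde{\thetav}_{k}(x)\bigr) \) by quadratic forms in the \( \xiv_{\rdb,\cdot} \) plus error terms, compare scales through \( \Ass.2 \), and combine a moment bound for the loss with a probability bound for the false-alarm event (the paper does exactly this, via Cauchy--Schwarz over the events \( \{\hat k(x)=\ell\} \)). Two points, however, do not survive scrutiny. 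The minor one: \( \{\tilde{\thetav}_{k}(x)\neq\hat{\thetav}_{k}(x)\} \) does not mean that \( \tilde{\thetav}_{k}(x) \) itself fails a test against some \( \ell<k \); it means the procedure terminated at some \( j<k \), i.e.\ \( \tilde{\thetav}_{j+1}(x) \) was rejected by some \( W^{(m)} \), \( m\le j \). The paper accordingly decomposes over \( \{\hat k(x)=\ell\}\subseteq\bigcup_{m\le\ell}\bigl\{L\bigl(W^{(m)},\tilde{\thetav}_{m}(x),\tilde{\thetav}_{\ell+1}(x)\bigr)>\zz_{m}\bigr\} \) and separately bounds the size of \( L^{r}\bigl(W^{(k)},\tilde{\thetav}_{k}(x),\tilde{\thetav}_{\ell}(x)\bigr) \) on that event; your version of the event is not the one the tests actually control.

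The genuine gap is your account of the term \( a_{1}r(\K-k) \), which is internally inconsistent and, as derived, would only produce a logarithmic term. You claim that summing per-step budgets over the \( \K-\ell \) later indices ``forces \( \zz_{\ell}\gtrsim\log(\K-\ell) \)'', and earlier you speak of ``\( r\log(\K-k) \)-type contributions''; but the theorem asserts a term \emph{linear} in \( \K-k \), and no union bound over later steps can produce that. The true source is the geometric growth of the information matrices under \( \Ass.2 \): \( \|\DP_{\rdm,k}\DP_{\rdb,\ell}^{-2}\DP_{\rdm,k}\|_{\infty}\le 2\nud^{-(k-\ell)} \) for \( k>\ell \), whence the restricted moment of the loss incurred by falling back from scale \( k \) to scale \( \ell \) satisfies a bound of order \( (\dimp+1)^{2r}\nud^{-2r(k-\ell)} \) --- it \emph{blows up geometrically} in \( k-\ell \). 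After Cauchy--Schwarz this must be offset by \( \P_{\thetavs}\bigl(\hat k(x)=\ell\bigr)^{1/2}\lesssim\ex^{-c_{0}r(\K-\ell)} \) with \( c_{0}>\log(1/\nud) \), and it is precisely this requirement that forces the critical values to carry a term linear in \( \K-\ell \) (the paper sets \( \xx_{m}=2c_{0}r(\K-m)+2\xx \) so that \( \sum_{\ell}\nud^{-2r(\K-\ell)}\ex^{-\xx_{\ell}/2} \) is a convergent geometric series). Without identifying this mechanism your argument establishes at best a weaker, logarithmic-in-\( (\K-k) \) choice of \( \zz_{k} \), for which the sum over \( \ell \) diverges and the propagation condition is not verified.
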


\begin{proof}
First we bound the quantity
\( L\bigl( W^{(k)},\tilde{\thetav}_{k}(x),\tilde{\thetav}_{\ell}(x) \bigr) \) on
the random set \( \RS = \RS_{1} \cap \RS_{2}\).
The majorization \eqref{LamLLabqr} and its corollary \eqref{LWkttqr} yield
on \( \RS \) with
\( \uv_{\ell k} \eqdef \DP_{\rdm,k} \bigl( \tilde{\thetav}_{\ell}(x) - \thetavs \bigr) \)
\begin{EQA}
    L\bigl( W^{(k)},\tilde{\thetav}_{k}(x),\tilde{\thetav}_{\ell}(x) \bigr)
    &=&
    L\bigl( W^{(k)},\tilde{\thetav}_{k}(x),\thetavs \bigr)
    -
    L\bigl( W^{(k)},\tilde{\thetav}_{\ell}(x),\thetavs \bigr) .
\label{LkLell}
    \\
    & \le &
    \frac{1}{2} \| \xiv_{\rdb,k} \|^{2}
    - \Lam\bigl( W^{(k)},\tilde{\thetav}_{\ell}(x),\thetavs \bigr) + \err_{\rdb,k}
\label{LkWktt}
    \\
    &=&
    \frac{1}{2} \| \xiv_{\rdb,k} \|^{2}
    - \uv_{\ell k}^{\T} \xiv_{\rdm,k} + \frac{1}{2} \| \uv_{\ell k} \|^{2}
    + 2 \err_{\rdb,k}
    \\
    & \le &
    \frac{1}{2} \bigl( \| \xiv_{\rdb,k} \| + \| \uv_{\ell k} \| \bigr)^{2}  + 2 \err_{\rdb,k}
    \\
    & \le &
    \| \xiv_{\rdb,k} \|^{2} + \| \uv_{\ell k} \|^{2} + 2 \err_{\rdb,k} ,
\label{supLamqr}
\end{EQA}
where we used the fact that \( \| \xiv_{\rdm,k} \| \le \| \xiv_{\rdb,k} \| \).
It is not difficult to see that
\begin{EQA}[c]
    \| \uv_{\ell k} \|^{2}
    =
    \| \DP_{\rdm,k} \DP_{\rd,\ell}^{-1} \DP_{\rd,\ell} \bigl( \tilde{\thetav}_{\ell} - \thetavs \bigr) \|^{2}
    \le
    \| \DP_{\rdm,k} \DP_{\rd,\ell}^{-2} \DP_{\rdm,k} \|_{\infty} \,\,
        \| \DP_{\rd,\ell} \bigl( \tilde{\thetav}_{\ell} - \thetavs \bigr) \|^{2} .
\label{uvellkqr}
\end{EQA}
By construction \( \DP_{\rdb,k}^{2} \le \DP_{k}^{2} \le \DP_{\rdm,k}^{2} \) and
the definition \eqref{alprdk}  implies by \( \alp_{\rd,k} \le 1/2 \)
\begin{EQA}[c]
    \DP_{\rdm,k}^{2}
    \le
    (1 - \alp_{\rd,k})^{-1} \DP_{\rdb,k}^{2}
    \le
    2 \DP_{\rdb,k}^{2} \, .
\label{DPrdbkrdm}
\end{EQA}
Now it follows from condition \( \Ass.2 \) that
\begin{EQA}
    \| \DP_{\rdm,k} \DP_{\rdb,\ell}^{-2} \DP_{\rdm,k} \|_{\infty}
    & \le &
    2 \| \DP_{k} \DP_{\ell}^{-2} \DP_{k} \|_{\infty}
    \le
    \begin{cases}
        2/\nud^{k - \ell}, & k > \ell, \\
        2 \nuu^{\ell-k}, & k < \ell.
    \end{cases}
\label{DPrdmkrdk}
\end{EQA}
Corollary~\ref{CTwilksqr2} implies
\begin{EQA}[c]
    \| \DP_{\rd,\ell} \bigl( \tilde{\thetav}_{\ell}(x) - \thetavs \bigr) \|
    \le
    2 \err_{\rdb,\ell}^{1/2}
        + \bigl( 1 + \alp_{\rd,\ell}^{1/2} \bigr) \| \xiv_{\rdb,\ell} \|
    \le
    2 \err_{\rdb,\ell}^{1/2} + 2 \| \xiv_{\rdb,\ell} \| .
\label{DPrdell}
\end{EQA}
We also use that \( \E_{\thetavs} \| \xiv_{\rdb,k} \|^{2r} \le (\dimp+1)^{r} C_{r}(\Ass) \) for all \( k \le \K \).
Now it holds from \eqref{supLamqr}, \eqref{DPrdmkrdk}, and \eqref{DPrdell}
for \( k > \ell \)
\begin{EQA}
    \ER_{\thetavs} L^{r}\bigl( W^{(k)},\tilde{\thetav}_{k}(x),\tilde{\thetav}_{\ell}(x) \bigr)
    & \le &
    \ER_{\thetavs} \left[ \| \xiv_{\rdb,k} \|^{2}
        + 8 \nud^{- k + \ell} \bigl( \err_{\rdb,\ell}^{1/2} + \| \xiv_{\rdb,\ell} \| \bigr)^{2}
    + 2 \err_{\rdb,k}  \right]^{r}
    \\
    & \le &
    C(\Ass)(\dimp+1)^{r}  \nud^{- r (k - \ell)} .
\label{LAAkl}
\end{EQA}
Similarly one can show that for \( k < \ell \) by \( \nuu < 1 \)
\begin{EQA}
    \ER_{\thetavs} L^{r}\bigl( W^{(k)},\tilde{\thetav}_{k}(x),\tilde{\thetav}_{\ell}(x) \bigr)
    & \le &
    \ER_{\thetavs} \bigl[
        \| \xiv_{\rdb,k} \|^{2} + 8 \bigl( \err_{\rdb,\ell}^{1/2} +  \| \xiv_{\rdb,\ell} \|
    \bigr)^{2}  + 2 \err_{\rdb,k} \bigr]^{r}
    \\
    & \le &
    C(\Ass) (\dimp+1)^{r} .
\label{LAAklm}
\end{EQA}
Also by Theorem~\ref{Txivbound} for \( \xx > 0 \)
\begin{EQA}[c]
    \P_{\thetavs} \bigl\{
        L\bigl( W^{(k)},\tilde{\thetav}_{k}(x),\tilde{\thetav}_{\ell}(x) \bigr)
        >
        C_{1} (\dimp+1) + C_{2} \xx
    \bigr\}
    \le
    2 \ex^{-\xx} .
\label{PRLDqr}
\end{EQA}
These bounds can be used to check that the critical value \( \zz_{k} \) which is selected
in the form \eqref{eq:crit} to ensure the propagation condition in \eqref{eq:propa}.
%
Consider a random set \( \cc{B}_{\ell} \eqdef \{ \hat{k}(x) = \ell \} \),
By definition of \( \hat{k} \), when \(\cc{B}_{\ell}\) happens,
at least one of the estimator \( \tilde{\thetav}_{\ell+1}(x) \) must be not accepted, that is,
\begin{EQA}[c]
    \cc{B}_{\ell}
    \subseteq
    \bigcup_{m=1}^{\ell} \Bigl\{
        L\bigl( W^{(m)},\tilde{\thetav}_{m}(x),\tilde{\thetav}_{\ell+1}(x) \bigr) > \zz_{m}
    \Bigr\} .
\label{Bellqr}
\end{EQA}
The bounds \eqref{LAAkl} and \eqref{PRLDqr} yield by the Cauchy-Schwarz inequality
\begin{EQA}
    && \nquad
    \ER_{\thetavs}
        L^{r}\bigl( W^{(k)},\tilde{\thetav}_{k}(x),\hat{\thetav}_{k}(x) \bigr)
    \\
    & \le &
    \sum_{\ell=1}^{k}
        \bigl[ \ER_{\thetavs}
            L^{2r}\bigl( W^{(k)},\tilde{\thetav}_{k}(x),\tilde{\thetav}_{\ell}(x)
            \bigr)
        \bigr]^{1/2}
        \bigl[ \PR_{\thetavs}(\cc{B}_{\ell}) \bigr]^{1/2}
    \\
    & \le &
    C(\Ass) (\dimp+1)^{2r}
    \sum_{\ell=1}^{k} \nud^{-2r(k - \ell)} \bigl[ \PR_{\thetavs}(\cc{B}_{\ell}) \bigr]^{1/2}
    \\
    & \le &
    C(\Ass) (\dimp+1)^{2r}
    \sum_{\ell=2}^{k} \nud^{-2r(k - \ell)} \left(
        \sum_{m=1}^{\ell} \PR_{\thetavs} \Bigl\{
            L\bigl( W^{(m)},\tilde{\thetav}_{m}(x),\tilde{\thetav}_{\ell+1}(x) \bigr)
            > \zz_{m}
        \Bigr\}
    \right)^{1/2} .
\end{EQA}
Fix \( c_{0} > \log (\nud^{-1}) \) and consider
\( \zz_{m} = C_{1} (\dimp+1) + C_{2} \xx_{m} \) with
\( \xx_{m} = 2 c_{0} r (\K - m) + 2 \xx  \) for some \( \xx \).
Then \eqref{PRLDqr} implies
\begin{EQA}
    && \nquad
    \ER_{\thetavs} \bigl[
        L^{r}\bigl( W^{(k)},\tilde{\thetav}_{k}(x),\hat{\thetav}_{k}(x) \bigr)
    \bigr]
    \\
    & \le &
    C(\Ass) (\dimp+1)^{2r} \sum_{\ell=2}^{\K} \nud^{-2r(\K - \ell)}
    \biggl( \sum_{m=1}^{\ell} 2 \ex^{- \xx_{m}} \biggr)^{1/2}
    \\
    & \le &
    C(\Ass) (\dimp+1)^{2r} \ex^{-\xx}
        \sum_{\ell=2}^{\K} \exp\bigl[ -2 r (\K - \ell) \bigl\{ c_{0} - \log(1/\nud) \bigr\} \bigr]
    \\
    & \le &
    C(\Ass) (\dimp+1)^{2r} \ex^{-\xx}
\label{ERtsum12}
\end{EQA}
and the bound \eqref{eq:propa} follows with
\( \xx = \log(1/\alp) + r \log(\dimp+1) + a_{0} \) for a proper \( a_{0} \).
\end{proof}

\subsection{Propagation Property and Stability}
The oracle result is a consequence of two properties of the procedure:
propagation under homogeneity and stability.
The first one means that the algorithm does not terminate for \( k < \ks \)
(no false alarm) with a high probability.
The stability property ensures that the estimation quality will not essentially
deteriorate in the steps ``after propagation'' for \( k > \ks \).

By construction, the procedure described in Section 2 provides the prescribed
performance if the true quantile function \( f(\cdot) \) follows the parametric
model:
at any intermediate step \( k < \K \) the non-adaptive estimator
\( \tilde{\thetav}_{k}(x) \) and the adaptive estimator \( \hat{\thetav}_{k}(x) \)
coincide with high probability
yielding that
\( \E_{\thetavs} L^{r}\bigl( W^{(k)},\tilde{\thetav}_{k}(x),\hat{\thetav}_{k}(x) \bigr) \)
is small.
The next theorem claims a similar performance of the \( k \) step estimator
\( \hat{\thetav}_{k}(x) \)
under the true nonparametric model \( \fs(\cdot) \),
however, the propagation property is only guaranteed for
\( k \le \ks \), that is, while the SMB assumption is fulfilled.

\begin{theorem}
Assume \( \Delta_{\ks}(\thetav) \le \Delta \) for some \( \ks \).
Then for any \( k \le \ks \)
\begin{EQA}
    \E \log\bigl\{
        1 + L^{r}\bigl( W^{(k)},\tilde{\thetav}_{k}(x),\hat{\thetav}_{k}(x) \bigr)
        / \riskt_{r}
    \bigr\}
    & \le &
    \Delta + \alpha,
\label{eq:prog1}
\end{EQA}
\end{theorem}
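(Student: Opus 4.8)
The plan is to transfer the parametric propagation bound \eqref{eq:propa} to the true, possibly nonparametric, model by a change-of-measure argument based on the localized Kullback--Leibler divergence \( \Delta_{k}(\thetavs) \). The argument runs exactly parallel to the proof of Theorem~\ref{Toracle1}: there the trivial bound \( \E_{\thetavs} L^{r}\bigl( W^{(k)},\tilde{\thetav}_{k}(x),\thetavs \bigr)/\riskt_{r} \le 1 \) (from the definition \eqref{riskkx} of \( \riskt_{r} \)) feeds the change of measure and produces the constant \( \Delta+1 \); here the analogous input is the propagation bound \( \E_{\thetavs} L^{r}\bigl( W^{(k)},\tilde{\thetav}_{k}(x),\hat{\thetav}_{k}(x) \bigr)/\riskt_{r} \le \alpha \), which produces \( \Delta+\alpha \). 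The case \( k = 1 \) is trivial since \( \hat{\thetav}_{1}(x) = \tilde{\thetav}_{1}(x) \), so it suffices to treat \( 2 \le k \le \ks \).

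First I would record that the nonnegative random variable \( \xi \eqdef L^{r}\bigl( W^{(k)},\tilde{\thetav}_{k}(x),\hat{\thetav}_{k}(x) \bigr)/\riskt_{r} \) is measurable with respect to the \( \sigma \)-field \( \cc{F}_{k} \) generated by \( \{ Y_{i}: w_{i}^{(k)} > 0 \} \). Indeed, \( \tilde{\thetav}_{k}(x) \) depends only on these observations, and \( \hat{\thetav}_{k}(x) \) is a function of \( \tilde{\thetav}_{1}(x),\ldots,\tilde{\thetav}_{k}(x) \); since \( h_{j} \le h_{k} \) for \( j \le k \), the support of \( W^{(j)} \) is contained in that of \( W^{(k)} \), so each \( \tilde{\thetav}_{j}(x) \) with \( j \le k \) is \( \cc{F}_{k} \)-measurable as well.

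Next I would invoke the variational (Donsker--Varadhan) representation of the Kullback--Leibler divergence: for probability measures \( P \ll Q \) and any measurable \( g \) with \( \E_{Q} \ex^{g} < \infty \), one has \( \E_{P} g \le \kullb(P,Q) + \log \E_{Q} \ex^{g} \). Applying this with \( P \), \( Q \) the restrictions of \( \P \) and \( \P_{\thetavs} \) to \( \cc{F}_{k} \) --- whose divergence equals \( \sum_{i: w_{i}^{(k)}>0} \kullb\bigl( P_{i,\fs_{i}},P_{i,\fs_{i}(\thetavs)} \bigr) = \Delta_{k}(\thetavs) \) by the product structure \eqref{Pti1nPi} --- and with \( g = \log(1+\xi) \) so that \( \ex^{g} = 1+\xi \), I obtain
\begin{EQA}[c]
    \E \log\bigl\{ 1 + \xi \bigr\}
    \le
    \Delta_{k}(\thetavs) + \log\bigl\{ 1 + \E_{\thetavs} \xi \bigr\} .
\end{EQA}
It then remains to bound the two terms on the right. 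Since \( k \le \ks \) gives \( \{ i: w_{i}^{(k)} > 0 \} \subseteq \{ i: w_{i}^{(\ks)} > 0 \} \) and each Kullback--Leibler summand in \eqref{Deltak} is nonnegative, we have \( \Delta_{k}(\thetavs) \le \Delta_{\ks}(\thetavs) \le \Delta \) (the particular value of \( \thetavs \) attaining the SMB condition being immaterial for the critical values by Lemma~\ref{Lpivotq}). For the second term, the preceding theorem on the critical values guarantees precisely the propagation condition \eqref{eq:propa}, i.e.\ \( \E_{\thetavs}\xi \le \alpha \); combined with \( \log(1+\alpha) \le \alpha \) this yields \eqref{eq:prog1}.

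The only genuinely non-routine ingredient is the change-of-measure step, and it is exactly why the modeling-bias functional \eqref{Deltak} is defined through the Kullback--Leibler divergence rather than through an \( L_{2} \)-type distance: this choice makes the transfer from the artificial parametric measure \( \P_{\thetavs} \) to the true measure \( \P \) cost only the additive term \( \Delta_{k}(\thetavs) \le \Delta \), which is the structural reason the loss of efficiency in the nonparametric regime remains controlled by \( \Delta \). Two minor points should be checked along the way: that \( \P \ll \P_{\thetavs} \) on \( \cc{F}_{k} \) (immediate, since \( P_{i,\fs_{i}} \) and \( P_{i,\fs_{i}(\thetavs)} \) are translates of a common law and the divergence is finite under the SMB assumption), and that \( \E_{\thetavs} \ex^{g} = 1 + \E_{\thetavs} \xi < \infty \) (immediate from \eqref{eq:propa}), both of which are routine.
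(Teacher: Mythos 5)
Your proof is correct and follows essentially the same route as the paper: a change of measure from \( \P \) to \( \P_{\thetavs} \) on the observations in the window of radius \( h_{k} \), paid for by the localized Kullback--Leibler divergence \( \Delta_{k}(\thetavs) \le \Delta \), combined with the propagation condition \( \E_{\thetavs}\xi \le \alpha \). The paper packages this step as Lemma~\ref{Linftheor} (which gives \( \E\log(1+Z) \le \Delta + \E_{0}Z \) via the Young-type inequality \( xy \le x\log x - x + \ex^{y} \)), whereas you invoke the Donsker--Varadhan bound directly and then use \( \log(1+\alpha)\le\alpha \); these are the same inequality up to that last relaxation, and your explicit verification of the measurability and absolute-continuity prerequisites is a welcome addition the paper leaves implicit.
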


The bound \eqref{eq:prog1} can be derived
from the next general result; see \cite{Spokoiny:2009}.

\begin{lemma}
\label{Linftheor}
Let \( \P \), \( \P_{0} \), be two measures s.t. \( \E \log(d\P/d\P_{0}) \le \Delta <
\infty \).
For any random variable \( Z \) with \( \E_{0} Z < \infty \), it holds
\( \E \log(1+Z) \le \Delta + \E_{0} Z \).
\end{lemma}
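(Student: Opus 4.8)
The plan is a one-line change-of-measure argument resting on the elementary inequality $\log t \le t - 1$. First I would write $\varphi \eqdef d\P/d\P_{0}$, so that $\varphi \ge 0$ and $\E_{0}\varphi = 1$ (the hypothesis $\E\log(d\P/d\P_{0}) \le \Delta$ tacitly presumes $\P \ll \P_{0}$), and observe that $\E\log(1+Z) = \E_{0}\bigl[ \varphi \log(1+Z) \bigr]$. On the event $\{\varphi > 0\}$ I would split $\varphi \log(1+Z) = \varphi\log\varphi + \varphi\log\{(1+Z)/\varphi\}$ and apply $\log t \le t - 1$ with $t = (1+Z)/\varphi$ to obtain the pointwise bound $\varphi\log\{(1+Z)/\varphi\} \le 1 + Z - \varphi$; on $\{\varphi = 0\}$ this summand contributes nothing to the $\P_{0}$-expectation.

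Integrating against $\P_{0}$ then gives $\E\log(1+Z) \le \E_{0}[\varphi\log\varphi] + \E_{0}[1+Z-\varphi] = \E_{0}[\varphi\log\varphi] + \E_{0} Z$, using $\E_{0}\varphi = 1$. The proof closes by identifying $\E_{0}[\varphi\log\varphi] = \E_{0}\bigl[ (d\P/d\P_{0})\log(d\P/d\P_{0}) \bigr] = \E\log(d\P/d\P_{0}) \le \Delta$, which yields $\E\log(1+Z) \le \Delta + \E_{0} Z$.

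The argument itself is routine; the only genuine care needed is measure-theoretic bookkeeping, which is what I expect to be the main obstacle to writing it cleanly. One must assume (as holds in every application in the paper, where $Z = L^{r}/\riskt_{r} \ge 0$) that $Z > -1$ so that $\log(1+Z)$ is defined, and, to make the integration step legitimate when $\E_{0}[\varphi\log\varphi]$ or $\E\log(1+Z)$ could a priori be $-\infty$, I would first establish the bound with $Z$ replaced by $Z \wedge M$ and $\varphi$ by $\varphi \wedge M$, then let $M \to \infty$; the hypothesis $\E_{0} Z < \infty$ keeps the right-hand side controlled. Alternatively one can bypass the case split entirely by invoking the Fenchel--Young inequality $xy \le x\log x - x + e^{y}$ with $x = \varphi$ and $y = \log(1+Z)$, which gives the same conclusion in a single step.
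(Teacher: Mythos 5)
Your proof is correct and is essentially the paper's argument: the paper applies the Fenchel--Young inequality $xy \le x\log x - x + e^{y}$ with $x = d\P/d\P_{0}$ and $y = \log(1+Z)$, which is exactly the rearrangement of your pointwise bound $\varphi\log\{(1+Z)/\varphi\} \le 1 + Z - \varphi$ obtained from $\log t \le t-1$ (and is the alternative you yourself mention at the end). Your additional care about the set $\{\varphi = 0\}$ and the truncation argument is a reasonable tightening of the same route, not a different one.
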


%

The propagation result \eqref{eq:prog1} explains well the behavior of the procedure
for the first \( \ks \) steps.
In addition, we also need a \emph{stability property} which makes sure that
at the further steps of the algorithm for \( k > \ks \),
the quality of the obtained adaptive estimator \( \hat{\thetav}_{k}(x) \)
will not significantly deteriorate.
The stability property can be stated as follows.

\begin{theorem}
The adaptive estimator \( \hat{\thetav}(x) \) fulfills
\begin{EQA}[c]
\label{LWkshatk}
    L\bigl( W^{(\ks)},\tilde{\thetav}_{\ks}(x),\hat{\thetav}(x) \bigr)
        \Ind\bigl\{ \hat{k}(x) > \ks \bigr\}
    \le
    \zz_{\ks}.
\end{EQA}
\end{theorem}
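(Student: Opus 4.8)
This assertion is essentially a deterministic consequence of the construction of the adaptive procedure in Section~\ref{Spointband}, so the plan is simply to unwind the definition of the stopping index $\hat k(x)$; no probabilistic argument is needed here. First I would dispose of the trivial case: on the complement of $\{\hat k(x) > \ks\}$ the indicator is zero, and since $L\bigl(W^{(\ks)},\tilde{\thetav}_{\ks}(x),\hat{\thetav}(x)\bigr) \ge 0$ and $\zz_{\ks} \ge 0$ the inequality holds there automatically. It therefore suffices to fix an arbitrary $m$ with $\ks < m \le \K$, work on the event $\{\hat k(x) = m\}$, establish $L\bigl(W^{(\ks)},\tilde{\thetav}_{\ks}(x),\hat{\thetav}(x)\bigr) \le \zz_{\ks}$ there, and then take the union over all such $m$.

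Next I would recall the recursive description of the procedure: $\hat{\thetav}_{k}(x) = \tilde{\thetav}_{k}(x)$ precisely when $\tilde{\thetav}_{k-1}(x)$ was accepted and all the tests \eqref{eq:confs} at step $k$ are passed, and otherwise $\hat{\thetav}_{k}(x) = \hat{\thetav}_{k-1}(x)$. A one-line induction then shows that once a rejection occurs the running estimator is frozen, so that $\hat{\thetav}(x) = \hat{\thetav}_{\K}(x) = \tilde{\thetav}_{\hat k(x)}(x)$ and $\hat k(x)$ is the largest index whose acceptance test was passed. Hence on $\{\hat k(x) = m\}$ the estimator $\tilde{\thetav}_{m}(x)$ is accepted, which by definition means that \eqref{eq:confs} holds at $k = m$ for \emph{every} $\ell = 1,\ldots,m-1$.

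The final step uses $m > \ks$, i.e. $\ks \in \{1,\ldots,m-1\}$, so the instance $\ell = \ks$ of \eqref{eq:confs} is among those that hold:
\[
    L\bigl(W^{(\ks)},\tilde{\thetav}_{\ks}(x)\bigr) - L\bigl(W^{(\ks)},\tilde{\thetav}_{m}(x)\bigr) \le \zz_{\ks}.
\]
Since $\hat{\thetav}(x) = \tilde{\thetav}_{m}(x)$ on this event, the left-hand side is exactly $L\bigl(W^{(\ks)},\tilde{\thetav}_{\ks}(x),\hat{\thetav}(x)\bigr)$ in the notation of \eqref{LWktt}, which proves the claim on $\{\hat k(x) = m\}$; multiplying by the indicator and summing the events over $m = \ks+1,\ldots,\K$ yields \eqref{LWkshatk}.

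As for where the difficulty lies: there is essentially none in this theorem itself — the only thing to be careful about is the bookkeeping, namely that $\hat{\thetav}(x)$ indeed equals $\tilde{\thetav}_{\hat k(x)}(x)$ and that being accepted at step $\hat k(x)$ entails having passed the test against the oracle estimator $\tilde{\thetav}_{\ks}(x)$, both of which read off directly from the recursion. The substantive analysis — showing that $\zz_{\ks}$ is at most logarithmic in $n$ and controlling the behaviour for $k \le \ks$ — is precisely the content of the propagation and critical-value theorems stated earlier, and is not required here.
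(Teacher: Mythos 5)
Your proposal is correct and follows exactly the paper's own (much terser) argument: $\hat{\thetav}(x)=\tilde{\thetav}_{\hat{k}(x)}(x)$ is by construction an accepted estimator, and acceptance at step $\hat{k}(x)>\ks$ requires passing the test \eqref{eq:confs} with $\ell=\ks$, which is precisely the claimed inequality. Your additional bookkeeping (the trivial case on $\{\hat{k}(x)\le\ks\}$ and the induction showing the running estimator freezes after a rejection) is a faithful expansion of the same deterministic argument, not a different route.
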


Due to \eqref{LWkshatk}, on the set \( \{ \hat{k}(x) \ge \ks \} \), the adaptive estimator
\( \hat{\thetav}(x) \) belongs to the confidence set \( \CS_{\ks}(\zz_{\ks}) \) of the oracle
estimator \( \tilde{\thetav}_{\ks}(x) \).
This assertion follows from the setup of our procedure because the estimate
\( \hat{\thetav}(x) = \tilde{\thetav}_{\hat{k}(x)}(x) \) is accepted.
If
\( \hat{k}(x) > \ks \), it should be consistent with \( \tilde{\thetav}_{\ks}(x) \),
and thus it belongs to the confidence set of
\( \tilde{\thetav}_{\ks(x)}(x) \).

\subsection{Proof of the ``oracle" property}
The \emph{propagation} and \emph{stability} results yield
\begin{EQA}
    && \nquad
    \E \log\bigl\{ 1 +
        \riskt_{r}^{-1}
        L^{r}\bigl( W^{(\ks)},\tilde{\thetav}_{\ks}(x),\tilde{\thetav}_{\hat{k}}(x) \bigr)
    \bigr\}
    \\
    &=&
    \E\left[
        \log\bigl\{
            1 + \riskt_{r}^{-1} L^{r}\bigl( W^{(\ks)},\tilde{\thetav}_{\ks}(x),\tilde{\thetav}_{\hat{k}}(x) \bigr)
        \bigr\} \Ind(\hat{k}\le \ks)
    \right]
    \\
    && \qquad
    + \, \E \left[ \log\left\{
            1 + \riskt_{r}^{-1} L^{r}\bigl( W^{(\ks)},\tilde{\thetav}_{\ks}(x),\tilde{\thetav}_{\hat{k}}(x) \bigr)
        \right\} \Ind(\hat{k} > \ks)
    \right]
    \\
    & \le &
    \Delta + \E_{\thetavs}\left[
        \riskt_{r}^{-1} L^{r}\bigl( W^{(\ks)},\tilde{\thetav}_{\ks}(x),\tilde{\thetav}_{\hat{k}}(x) \bigr)
    \right]
    \\
    && \qquad
    + \, \E \log\left\{
        1 + \riskt_{r}^{-1} L^{r}\bigl( W^{(\ks)},\tilde{\thetav}_{\ks}(x),\tilde{\thetav}_{\hat{k}}(x) \bigr)
        \Ind(\hat{k} > \ks)
    \right\}
    \\
    & \le &
    \Delta + \rho + \log(1+\zz_{\ks}/\riskt_{r})
\end{EQA}

 \bibliography{bibloc}
 \typeout{get arXiv to do 4 passes: Label(s) may have changed. Rerun}

\end{document}